\let\frak\mathfrak
\let\Bbb\mathbb
\def\>{\relax\ifmmode\mskip.666667\thinmuskip\relax\else\kern.111111em\fi}
\def\<{\relax\ifmmode\mskip-.333333\thinmuskip\relax\else\kern-.0555556em\fi}
\def\vsk#1>{\vskip#1\baselineskip}
\def\vv#1>{\vadjust{\vsk#1>}\ignorespaces}
\def\vvn#1>{\vadjust{\nobreak\vsk#1>\nobreak}\ignorespaces}
\def\sskip{\par\vskip.2\baselineskip plus .05\baselineskip}
\let\Medskip\medskip
\def\medskip{\par\Medskip}
\let\Bigskip\bigskip
\def\bigskip{\par\Bigskip}
\let\Maketitle\maketitle
\def\maketitle{\hrule height0pt\vskip-\baselineskip
\Maketitle\thispagestyle{empty}\let\maketitle\empty}
\newtheorem{thm}{Theorem}[section]
\newtheorem{cor}[thm]{Corollary}
\newtheorem{lem}[thm]{Lemma}
\numberwithin{equation}{section}
\theoremstyle{definition}
\let\mc\mathcal
\let\nc\newcommand
\nc{\on}{\operatorname}
\nc{\Z}{{\mathbb Z}}
\nc{\C}{{\mathbb C}}
\nc{\N}{{\mathbb N}}
\nc{\pone}{{\mathbb C}{\mathbb P}^1}
\nc{\arr}{\rightarrow}
\nc{\larr}{\longrightarrow}
\nc{\al}{\alpha}
\nc{\W}{{\mc W}}
\nc{\la}{\lambda}
\nc{\su}{\widehat{{\mathfrak sl}}_2}
\nc{\g}{{\mathfrak g}}
\nc{\h}{{\mathfrak h}}
\nc{\m}{{\mathfrak m}}
\nc{\n}{{\mathfrak n}}
\nc{\Gm}{\Gamma}
\nc{\La}{\Lambda}
\nc{\gl}{\widehat{\mathfrak{gl}_2}}
\nc{\bi}{\bibitem}
\nc{\om}{\omega}
\nc{\Res}{\on{Res}}
\nc{\gm}{\gamma}
\nc{\Om}{\Omega}
\def\fratop{\genfrac{}{}{0pt}1}
\def\satop#1#2{\fratop{\scriptstyle#1}{\scriptstyle#2}}
\def\ev{\mbox{\sl ev}}
\def\ch{\on{ch}}
\def\End{\on{End\>}}
\def\Gr{\on{Gr}}
\def\Res{\on{Res}}
\def\rdet{\on{rdet}}
\def\Wr{\on{Wr}}
\def\B{{\mc B}}
\def\D{{\mc D}}
\def\O{{\mc O}}
\def\V{{\mc V}}
\let\dl\delta
\let\Dl\Delta
\let\si\sigma
\let\Sig\varSigma
\let\Tilde\widetilde
\let\der\partial
\let\ge\geqslant
\let\geq\geqslant
\let\le\leqslant
\let\leq\leqslant
\nc{\gln}{\mathfrak{gl}_N}
\nc{\sln}{\mathfrak{sl}_N}
\def\glnt{\gln[t]}
\def\Uglnt{U(\glnt)}
\def\beq{\begin{equation}}
\def\eeq{\end{equation}}
\def\be{\begin{equation*}}
\def\ee{\end{equation*}}
\nc{\bean}{\begin{eqnarray}}
\nc{\eean}{\end{eqnarray}}
\nc{\bea}{\begin{eqnarray*}}
\nc{\eea}{\end{eqnarray*}}
\nc{\bs}{\boldsymbol}
\nc{\Ref}[1]{{\rm(\ref{#1})}}
\nc{\glN}{\mathfrak{gl}_N}
\nc{\glNt}{\mathfrak{gl}_N[t]}
\nc{\s}{sing}
\nc{\R}{\Bbb R}
\nc{\Oml}{{\Om_{\bs\la}}}
\nc{\OmLb}{{\Om_{\bs\La,\bs\la,\bs b}}}
\nc{\Ol}{{\mc O_{\bs\la}}}
\nc{\OLb}{{\mc O_{\bs\La,\bs\la,\bs b}}}
\nc{\VSl}{{(\V^S)_{\bs\la}}}
\nc{\Bl}{{\B_{\bs\la}}}
\nc{\Ml}{{\mc M_{\bs\la}}}
\nc{\Mlb}{{\mc M_{\bs\La,\bs\la,\bs b}}}
\nc{\Blb}{{\B_{\bs\La,\bs\la,\bs b}}}
\nc{\Omn}{{\Omega_{\bs n,\bs b,\bs K}}}
\nc{\Omlb}{{\bar\Om_{\bs\la}}}
\nc{\ep}{\epsilon}
\nc{\Dlb}{\Dl_{\bs\La,\bs\la,\bs b,\bs K}}
\nc{\Bb}{{\bf b}}
\nc{\glt}{{\frak{gl}_2}}
\nc{\A}{{\mc A}}
\nc{\slt}{{\frak{sl}_2}}
\nc{\Ma}{{\mc M_{\bs a}}}
\nc{\Mal}{{\mc M_{\bs\la,\bs a}}}
\nc{\Malp}{{\mc M_{\phi,\bs\la,\bs a}}}
\nc{\Bal}{{\B_{\bs\la,\bs a}}}
\nc{\Ola}{{\mc O_{\bs\la,\bs a}}}
\nc{\Bv}{{\mc B_{\V^S}}}
\nc{\Bvz}{{\mc B^0_{\V^S}}}
\nc{\sing}{{\rm Sing\,}}
\nc{\Uglt}{U(\glt)}
\nc{\Olo}{{\mc O^0_{\bs\la}}}
\nc{\kk}{K}
\nc{\Oll}{{\Omega_{\bs\la}}}
\nc{\T}{{\mc T}}
\nc{\CC}{{\mc C}}
\nc\Vl{{(\V^S)^{sing}_{\bs\la}}}
\def\plainlabel{\def\makelabel##1{##1}}
\begin{document}

\title[ Three sides of the geometric Langlands correspondence]
{Three sides of the geometric Langlands correspondence
for $\gln$ Gaudin model and Bethe vector averaging maps}

\author[E.\,Mukhin, V.\,Tarasov, A.\,Varchenko]
{E.\,Mukhin$\>^{*,1}$, V.\,Tarasov$\>^{\star,*}$,
and A.\,Varchenko$\>^{\diamond,2}$}

\thanks{${}^1$\ Supported in part by NSF grant DMS-0601005}
\thanks{${}^2$\ Supported in part by NSF grant DMS-0555327}

\maketitle

\begin{center}
{\it $^\star\<$Department of Mathematical Sciences
Indiana University\,--\>Purdue University Indianapolis\\
402 North Blackford St, Indianapolis, IN 46202-3216, USA\/}

\medskip
{\it $^*\<$St.\,Petersburg Branch of Steklov Mathematical Institute\\
Fontanka 27, St.\,Petersburg, 191023, Russia\/}

\medskip
{\it $^\diamond\<$Department of Mathematics, University of North Carolina
at Chapel Hill\\ Chapel Hill, NC 27599-3250, USA\/}
\end{center}

\medskip
\begin{abstract}
We consider the $\gln$ Gaudin model of a tensor power of
the standard vector representation.
The geometric Langlands correspondence in the Gaudin model relates
the Bethe algebra of the commuting Gaudin Hamiltonians and the
algebra of functions on a suitable space
of $N$-th order differential operators. In
this paper we introduce a third side of the
correspondence: the algebra
of functions on the critical set of a master function.
We construct isomorphisms of the third algebra and the first two.

A new object is the Bethe vector averaging maps.
\end{abstract}

\maketitle

\section{Introduction}

We consider the $\gln$ Gaudin model associated with a tensor power of
the standard vector representation.
The geometric Langlands correspondence identifies
the Bethe algebra of the commuting Gaudin Hamiltonians and
the algebra of functions on a suitable space
of $N$-th order differential operators. In
this paper we introduce a third ingredient of the
correspondence: the algebra
of functions on the critical set of a master function.
We construct isomorphisms of the three algebras.

Master functions were introduced in \cite{SV} to construct
hypergeometric integral solutions of the KZ equations,
\bea
\kappa \frac{\der I}{\der z_i}
= H_i(\bs z) I(\bs z) , \phantom{a} i=1,\dots,n\ ,
\qquad I(\bs z)\ =\ \int \Phi(\bs z,\bs t)^{1/\kappa}
\omega(\bs z,\bs t) d\bs t\ ,
\eea
where $H_i(\bs z)$ are the Gaudin Hamiltonians,
$\Phi(\bs z,\bs t)$ is a scalar master functions, $\omega(\bs z,\bs t)$ is a
universal weight function, which is a vector valued function.
It was realized almost immediately
\cite{Ba, RV} that the value of the universal weight function at a
critical point of the master function is an eigenvector of the Gaudin
Hamiltonians. This construction of the eigenvectors is called the
Bethe ansatz. The critical point equations for the master function
are called the Bethe ansatz equations and the eigenvectors are called
the Bethe vectors.
The Bethe ansatz gives a relation between the critical points of the master
function and the algebra generated by Gaudin Hamiltonians. The algebra of all
(in particular, generalized) Gaudin Hamiltonians is called the Bethe algebra.
Higher Gaudin Hamiltonians were introduced using different approaches in
\cite{FFR} and \cite{T}, see also \cite{MTV1}.

In \cite{ScV, MV2}, an $N$-th order differential operator was assigned to every
critical point of the master function. The differential operators appearing
in that construction form the second component of the geometric Langlands
correspondence.

The third component of the geometric Langlands correspondence is the algebra
of functions on the critical set of the master function. In this paper we show
that all three components of the geometric Langlands correspondence are
on equal footing; they are isomorphic.

The main results of the paper are Corollaries \ref{cor iso O to T} and
\ref{cor cc b}.

\medskip

The paper is organized as follows. In Section \ref{alg sec} we
recall the definition of the Bethe algebra $\B_\V$ of a tensor power of the
vector representation of $\gln$ \ \cite{MTV3}. In Section \ref{funct on schub} we
introduce the algebra $\O_W$ of functions on a suitable Schubert cell
$\W$. Points of $\W$ are some $N$-dimensional spaces of polynomials in one variable.
Such a space $X$ is characterized by a monic $N$-th order
differential operator with kernel $X$. The algebra $\O_\W$ can be
considered as the algebra of functions on the space of those
differential operators. In Section \ref{sec isom} we recall an
isomorphism
$\zeta : \O_\W \to \B_\V$ constructed in \cite{MTV3}.
In Section \ref{Critical points of the master function}
a master function
and its quotient critical set $\CC$
are introduced and an isomorphism
$\iota^* : \O_\W \to \O_\CC$ is constructed. Here $\O_\CC$ is the algebra of functions on
$\CC$.
Consequently, we obtain a composition isomorphism
$\B_\V \xrightarrow{\zeta^{-1}} \O_\W \xrightarrow{\iota^*} \O_\CC$.
In Section \ref{Universal weight function
and Bethe vectors} we introduce the universal weight function
$\omega(\bs z,\bs t)$ and describe the basic facts of the Bethe ansatz.
In Section \ref{Polynomiality} the
Bethe vector averaging maps
\bea
v_F\ :\ \bs z \ \mapsto \ \frac 1{l_1!\dots l_{N-1}!}
\sum_{(\bs z,\bs p )\in C_{\bs z}} \frac {F(\bs z,\bs p)\,\omega(\bs z,\bs p)}
{{\rm Hess}_{\bs t}\log \Phi (\bs z,\bs p)}\
\eea
are introduced.
Here $\Phi (\bs z,\bs t)$ is the master function,
$\CC_{\bs z}$ the critical set of the function $\Phi(\bs z,\,\cdot\,)$,
$\omega(\bs z,\bs t)$ the Bethe vector,
$F(\bs z,\bs t)$ an auxiliary polynomial function.
Theorem \ref{thm conj} says that
the Bethe vector averaging maps
are polynomial maps. This is the main technical result of the paper.
Using the Bethe vector averaging maps, we construct in
Section \ref{Factorized critical set and Bethe algebra}
a new (direct) isomorphism $\nu : \O_\CC \to \B_\V$.
We prove that the throughout composition
$\B_\V \xrightarrow{\zeta^{-1}} \O_\W \xrightarrow{\iota^*} \O_\CC
\xrightarrow{\nu} \B_\V$ is the identity map.
Section \ref{proofs} contains the proof of Theorem \ref{thm conj}.

\medskip
The paper discusses one example: the Gaudin model on a tensor power of
the vector representation of $\gln$.
But the
picture presented here presumably holds for more general
representations and more general Lie algebras. All
the ingredients of our considerations (the Bethe algebras, master functions,
Bethe vector averaging maps) are available in other
situations.

\medskip

The authors thank A. Gabrielov for helpful discussions.

\section{Bethe algebra $\B_{\bs\la}$}
\label{alg sec}
\subsection{Lie algebra $\gln$}
Let $e_{ij}$, $i,j=1,\dots,N$, be the standard generators of the Lie algebra
$\gln$ satisfying the relations
$[e_{ij},e_{sk}]=\dl_{js}e_{ik}-\dl_{ik}e_{sj}$.
Let $\h\subset \gln$ be the Cartan subalgebra generated by
$e_{ii}, \,i=1,\dots,N$.

Let $M$ be a $\gln$-module.
A vector $v\in M$ has weight
$\bs\la=(\la_1,\dots,\la_N)\in\C^N$ if $e_{ii}v=\la_iv$ for $i=1,\dots,N$.
A vector $v$ is singular if $e_{ij}v=0$ for $1\le i<j\le N$.
Denote by $(M)_{\bs\la}$ the subspace of $M$ of weight $\bs\la$,
by $(M)^{sing}$ the subspace of all singular vectors in $M$, and by
$(M)_{\bs\la}^{sing}$ the subspace of all singular vectors
of weight $\bs\la$.

Denote by $L_{\bs\la}$ the irreducible finite-dimensional $\gln$-module with
highest weight $\bs\la$.
The $\gln$-module $L_{(1,0,\dots,0)}$ is the standard $N$-dimensional vector
representation of $\gln$, denoted below by $V$. We choose a highest weight vector of $V$
and denote it by $v_+$.

The Shapovalov form on $V$ is the unique symmetric bilinear form
$S$ defined by the conditions
$S(v_+,v_+) = 1$, \,$S(e_{ij}u, v) = S(u, e_{ji}v)$,
for all $u, v \in V$ and $1\leq i,j\leq N$. For a natural number $n$,
the tensor Shapovalov form on $V^{\otimes n}$ is
the tensor product of the Shapovalov forms of factors.

A sequence of integers $\bs\la=(\la_1,\dots,\la_N)$ such that
$\la_1\ge\la_2\ge\dots\ge\la_N\ge0$ is called a { partition with at most
$N$ parts\/}. Denote $|\bs\la|=\la_1+\dots+\la_N$.

\subsection{Current algebra $\glnt$}
Let $\glnt=\gln\otimes\C[t]$ be the complex
Lie algebra of $\gln$-valued polynomials
with the pointwise commutator.
We identify $\gln$ with the subalgebra $\gln\otimes1$
of constant polynomials in $\glnt$. Hence, any $\glnt$-module has a canonical
structure of a $\gln$-module.

For $g\in\gln$, set
$g(u)=\sum_{s=0}^\infty (g\otimes t^s)u^{-s-1}$.
For each $a\in\C$, there exists an automorphism $\rho_a$ of $\glnt$,
\;$\rho_a:g(u)\mapsto g(u-a)$. Given a $\glnt$-module $M$, we denote by $M(a)$
the pull-back of $M$ through the automorphism $\rho_a$. As $\gln$-modules,
$M$ and $M(a)$ are isomorphic by the identity map.

We have the evaluation homomorphism,
${\ev:\glnt\to\gln}$, \;${\ev:g(u) \mapsto g\>u^{-1}}$.
Its restriction to the subalgebra $\gln\subset\glnt$ is the identity map.
For any $\gln$-module $M$, we denote by the same letter the $\glnt$-module,
obtained by pulling $M$ back through the evaluation homomorphism.

There is a $\Z_{\ge0}$-grading on $\glnt$: for any $g\in\gln$,
we have $\deg\,g\otimes t^r\,=\,r$.

\subsection{The $\glnt$-module $\V^S$}
\label{VS}

Let $n$ be a positive integer.
Let $\V$ be the space of polynomials in $z_1,\dots,z_n$ with coefficients
in $V^{\otimes n}$, $\V\>=\,V^{\otimes n}\<\otimes_{\C}\C[z_1,\dots,z_n]$.
For $v\in V^{\otimes n}$ and
$p(z_1,\dots,z_n)\in\C[z_1,\dots,z_n]$, we write
$p(z_1,\dots,z_n)\,v$ instead of $v\otimes p(z_1,\dots,z_n)$.

The symmetric group $S_n$ acts on $\V$ by permutations of the factors
of $V^{\otimes n}$ and the variables $z_1,\dots,z_n$ simultaneously,
\vvn.2>
\be
\si\bigl(p(z_1,\dots,z_n)\,v_1\otimes\dots\otimes v_n\bigr)\,=\,
p(z_{\si(1)},\dots,z_{\si(n)})\,
v_{\si^{-1}(1)}\!\otimes\dots\otimes v_{\sigma^{-1}(n)}\,,\qquad\si\in S_n\,.
\kern-3em
\vv.2>
\ee
Denote by $\V^S$ the subspace of $S_n$-invariants of $\V$.
The space $\V^S$ is a free $\C[z_1,\dots,z_n]^S$-module of rank $N^n$, see
\cite{CP}, cf. \cite{MTV3}.

The space $\V$ is a $\glnt$-module with a series $g(u)$, \,$g\in\gln$,
acting by
\vvn.1>
\beq
\label{action}
g(u)\,\bigl(p(z_1,\dots,z_n)\,v_1\otimes\dots\otimes v_n)\,=\,
p(z_1,\dots,z_n)\,\sum_{s=1}^n
\frac{v_1\otimes\dots\otimes gv_s\otimes\dots\otimes v_n}{u-z_s}\ .
\vv.2>
\eeq
The $\glnt$-action on $\V$ commutes with the $S_n$-action.
Hence, $\V^S\subset \V$ is a $\glnt$-submodule.

\medskip

Define a
grading on $\C[z_1,\dots,z_n]$ by setting $\deg z_i=1$ for all
$i$.
Define a grading on $\V$ by setting $\deg(v\otimes
p)=\deg p$ for any $v\in V^{\otimes n}$ and
$p\in\C[z_1,\dots,z_n]$. The grading on $\V$ induces a grading
on $\V^S$ and $\End(\V^S)$.
The $\glnt$-action on \/ $\V^S$ is graded,
\cite{CP}.

\medskip
Let $\bs\la$ be a partition of $n$. The space
$(\V^S)^{sing}_{\bs\la}$ is a free graded $\C[z_1,\dots,z_n]^S$-module. Its graded
character is
\bean
\label{char V}
\ch((\V^S)_{\bs\la}^{sing})\,=\,
\frac{\prod_{1\le i<j\le N}(1-q^{\la_i-\la_j+j-i})}
{\prod_{i=1}^N(q)_{\la_i+N-i}}\ q^{\sum_{i=1}^N{(i-1)\la_i }},
\eean
where\/ $\,(q)_a=\prod_{j=1}^a(1-q^j)\,$, see \cite{CP}, \cite{CL}, \cite{MTV3}.

\subsection{Bethe algebra}
\label{secbethe}
Given an ${N\times N}$ matrix $A=(a_{ij})$,
we define its row determinant to be
\vvn.3>
\bea
\rdet A\,=
\sum_{\;\si\in S_N\!} (-1)^\si\,a_{1\si(1)}a_{2\si(2)}\dots a_{N\si(N)}\,.
\vv.2>
\eea
Let $\der$ be
the operator of differentiation in the variable $u$.
Define the {\it universal differential operator\/} $\D$ by the formula
\vvn-.6>
\be
\D=\,\rdet\left( \begin{matrix}
\der-e_{11}(u) & -\>e_{21}(u)& \dots & -\>e_{N1}(u)\\
-\>e_{12}(u) &\der-e_{22}(u)& \dots & -\>e_{N2}(u)\\
\dots & \dots &\dots &\dots \\
-\>e_{1N}(u) & -\>e_{2N}(u)& \dots & \der-e_{NN}(u)
\end{matrix}\right).
\vv.2>
\ee
It is a differential operator in $u$, whose coefficients are
formal power series in $u^{-1}$ with coefficients in $\Uglnt$,
\vvn-.3>
\bea
\D=\,\der^N+\sum_{i=1}^N\,B_i(u)\,\der^{N-i}\>,
\qquad
B_i(u)\,=\,\sum_{j=i}^\infty B_{ij}\>u^{-j}\,,
\eea
and $B_{ij}\in\Uglnt$, \,$i=1,\dots,N$, \,$j\in\Z_{\ge i}\>$.
The unital subalgebra of $\Uglnt$ generated by $B_{ij}$,
\,$i=1,\dots,N$, \,$j\in\Z_{\ge 0}\>$, is called the {\it Bethe algebra\/}
and denoted by $\B$.

\sskip
By \cite{T}, cf.~\cite{MTV1},
the algebra $\B$ is commutative,
and $\B$ commutes with the subalgebra $U(\gln)\subset \Uglnt$.

\subsubsection{}
\label{ass with eigen}
Let $M$ be a $\B$-module and $v\in M$ an eigenvector of $\B$.
For every coefficient $B_i(u)$ we have $B_i(u)v=h_i(u)v$,
where $h_i(u)$ is a scalar series. The scalar differential operator
$
\D_v\>=\,\der^N+\>\sum_{i=1}^N\>h_i(u)\,\der^{N-i}
$
will be called
the differential operator associated with an eigenvector $v$.

\subsubsection{}
As a subalgebra of $\Uglnt$, the algebra $\B$ acts on any $\glnt$-module $M$.
Since $\B$ commutes with $U(\gln)$, it preserves the weight subspaces of $M$
and the subspaces $(M)^{sing}_{\bs\la}$.

For a $\B$-module $M$, the image of $\B$ in $\End(M)$ is called
the Bethe algebra of $M$.

\subsubsection{}
\label{sec B^V_i}
Let $\bs\la$ be a partition of $n$ with at most $N$ parts. The space
$(\V^S)^{sing}_{\bs\la}$ is a $\B$-module.
Set
\bea
\D^\V=\,\der^N+\sum_{i=1}^N\,B_{i}^\V(u)\,\der^{N-i}\>,
\qquad
B_{i}^\V(u)\,=\,\sum_{j=i}^\infty B_{ij}^\V\>u^{-j}\,,
\eea
where $B^\V_{ij}$ is the image of $B_{ij}$ in $\End((\V^S)^{sing}_{\bs\la})$.

For any $(i,j)$, the element $B^\V_{ij}$ is homogeneous
of degree $j-i$.
For any $i$
the series $B^\V_{i}(u)$ is homogeneous of degree $-i$, see
\cite{MTV3}.

Denote by $\B_{\V}$ the Bethe algebra of $(\V^S)^{sing}_{\bs\la}$.
The Bethe algebra $\B_\V$ is our first main object.

\section{Algebra of functions $\mc O_\W$}
\label{funct on schub}

\subsection{Cell $\W$ and algebra $\O_{\W}$}
\label{Ominfty}
Let $N,d\in\Z_{>0}$, $N\leq d$. Let $\C_d[u]$ be the space of
polynomials in $u$ of degree less than $d$, $\dim \C_d[u]=d$.
Let $\Gr(N,d)$ be the Grassmannian of all $N$-dimensional
vector subspaces of
$\C_d[u]$.

\medskip
Given a partition $\bs\la=(\la_1,\dots,\la_N)$ with $\la_1\leq d-N$,
introduce a sequence
\vvn.2>
\be
P\,=\,\{d_1>d_2>\dots>d_N\}\,,\qquad d_i=\la_i+N-i\,.
\ee
Denote by $\W$ the subset of $ \Gr(N,d)$ consisting of all
$N$-dimensional subspaces
$X\subset\C_d[u]$ such that
for every $i=1,\dots,N$, the subspace $X$ contains a polynomial
of degree $d_i$.

In other words, $\W$ consists
of subspaces $X\subset\C_d[u]$ with a basis
$f_1(u),\dots,f_N(u)$ of the form
\vvn-.5>
\bean
\label{Basis}
f_i(u)=u^{d_i}+\sum_{j=1,\ d_i-j\not\in P}^{d_i}f_{ij}u^{d_i-j}.
\vv.2>
\eean
For a given $X\in\W$, such a basis is unique. The basis
$f_1(u),\dots,f_N(u)$ will be called the flag basis of $X$.

The set $\W$ is a (Schubert) cell isomorphic to an affine space
of dimension $|\bs\la|$ with coordinate functions $f_{ij}$.
Let $\O_{\W}$ be the algebra of regular functions on
$\W$\,,
\bea
\label{Ola}
\O_{\W}\ =\
\C[\>f_{ij}\>,\ i=1,\dots,N,\ j=1,\dots,d_i,\ d_i-j\not\in P\>]\,.
\eea
We may regard the polynomials $f_i(u)$, $i=1,\dots,N$, as generating functions
for the generators $f_{ij}$ of the algebra $\O_{\W}$.

The algebra $\O_{\W}$ is graded with $\deg\,f_{ij}\,=\,j$.
A polynomial $f_i(u)$ is homogeneous of degree $d_i$.
The graded character of $\O_{\W}$ is \
\bean
\label{char O}
\ch(\mc
O_{\W})\,=\,\frac{\prod_{1\leq i<j\leq N}\,(1-q^{d_i-d_j})}
{\prod_{i=1}^N(q)_{d_i}}\ =\
\frac{\prod_{1\le i<j\le N}(1-q^{\la_i-\la_j+j-i})}
{\prod_{i=1}^N(q)_{\la_i+N-i}}\ ,
\eean
see \,\cite{MTV3}.

\subsection{New generators of $\O_{\W}$}
For $g_1,\dots,g_N \in \C[u]$, introduce the Wronskian
\be
\Wr(g_1(u),\dots,g_N(u))\,=\,
\det\left(\begin{matrix} g_1(u) & g_1'(u) &\dots & g_1^{(N-1)}(u) \\
g_2(u) & g_2'(u) &\dots & g_2^{(N-1)}(u) \\ \dots & \dots &\dots & \dots \\
g_N(u) & g_N'(u) &\dots & g_N^{(N-1)}(u)
\end{matrix}\right),
\ee
where an $i$-th row is formed by derivatives of $g_i$.

Let $f_i(u)$, $i=1,\dots,N$, be the generating functions in \Ref{Basis}.
We have
\beq
\label{Wr coef}
\Wr(f_1(u),\dots,f_N(u))\,=\prod_{1\le i<j\le N}(d_j-d_i)
\ \Bigl(u^n+\sum_{s=1}^n (-1)^s\>A_s\,u^{n-s}\Bigr)\,,
\eeq
where $n= |\bs\la|$ and $A_1,\dots,A_n$ are elements of $\O_{\W}$.
Define
\vvn.2>
\bea
\label{DOla}
\D^\W=\,\frac{1}{\Wr(f_1(u),\dots,f_N(u))}\,\rdet
\left(\begin{matrix} f_1(u) & f_1'(u) &\dots & f_1^{(N)}(u) \\
f_2(u) & f_2'(u) &\dots & f_2^{(N)}(u) \\ \dots & \dots &\dots & \dots \\
1 & \der &\dots & \der^N
\end{matrix}\right).
\eea
We have
\vvn-.4>
\beq
\label{DO}
\D^\W=\,\der^N+\sum_{i=1}^N\,B^\W_{i}(u)\,\der^{N-i}\>,
\qquad
B^\W_i(u)\,=\,\sum_{j=i}^\infty B^\W_{ij}\>u^{-j}\,,
\eeq
and $B^\W_{ij}\in\O_{\W}$\,, \ $i=1,\dots,N$, \,$j\in\Z_{\geq i}\>$.
For any $(i,j)$, the element $B^\W_{ij}$ is homogeneous
of degree $j-i$.
For any $i$
the series $B^\W_{i}(u)$ is homogeneous of degree $-i$.
The elements $B^\W_{ij}\in\O_{\W}$, $i=1,\dots,N$,
$j\in \Z_{\geq i}$, generate the algebra $\O_{\W}$,\ see
\cite{MTV3}.

\subsubsection{}
For $X\in\W$, denote by $\D_X$ the monic scalar differential operator of
order $N$ with kernel $X$.
We call $\D_X$ the differential operator associated with $X$.
The operator $\D_X$ is obtained from $\D^\W$ by specialization of variables
$f_{ij}$ to their values at $X$.

\subsection{Wronski map}
\label{wronski}
Let $X\in\W$.
The Wronskian determinant of a basis of the subspace $X$ does not depend on the choice
of the basis up to multiplication by a number. The monic
polynomial representing the Wronskian determinant of a basis of $X$
is called the Wronskian of\/ $X$ and
denoted by $\Wr_X(u)$.

The Wronski map\
$ \W \to\C^n $ sends a point $X\in \W$ to a point $\bs a=(a_1,\dots,a_n)$,
if $\Wr_X(u)\>=\>u^n+\sum_{s=1}^n (-1)^s a_s u^{n-s}$.
The Wronski map has finite degree.

\section{Isomorphism of $\B_{\V}$ and $\O_{\W}$}
\label{sec isom}

\begin{thm} [\cite{MTV3}]
\label{first}
The map
\vvn-.1>
\be
\zeta :\O_{\W}\to\B_{\V}\,,
\qquad
B_{ij}^\W\mapsto B_{ij}^\V\,,
\vv.2>
\ee
is a well-defined isomorphism of graded algebras.
\end{thm}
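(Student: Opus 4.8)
The plan is to present both algebras as graded quotients of a single free polynomial algebra and to show the two defining ideals coincide: one inclusion will come from a differential-operator identification, the reverse from a Hilbert-series computation. Introduce the free commutative graded algebra $R=\C[\,b_{ij}\mid i=1,\dots,N,\ j\ge i\,]$ with $\deg b_{ij}=j-i$, together with the two graded surjections $\pi_\W\colon R\to\O_\W$, $b_{ij}\mapsto B^\W_{ij}$, and $\pi_\V\colon R\to\B_\V$, $b_{ij}\mapsto B^\V_{ij}$; both are surjective because each family generates its algebra. With this setup $\zeta$ is a well-defined graded homomorphism precisely when $\ker\pi_\W\subseteq\ker\pi_\V$, it is then automatically surjective, and it is an isomorphism once, in addition, the two Hilbert series agree. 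So everything reduces to the inclusion $\ker\pi_\W\subseteq\ker\pi_\V$ and the equality $\ch(\O_\W)=\ch(\B_\V)$.

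For the inclusion I would read both maps off the coefficients of a monic order-$N$ differential operator. An element $P\in\ker\pi_\W$ is exactly a polynomial relation among the coefficients (in powers of $u^{-1}$) of $\D^\W$, equivalently a relation holding at the coefficients of $\D_X$ for every $X\in\W$, since the $B^\W_{ij}$ are the coordinate functions on $\W$ and specialize to the coefficients of $\D_X$. On the Bethe side $\B_\V$ is commutative and, for a generic value of the symmetric functions of $z_1,\dots,z_n$, acts on $\Vl$ with simple joint spectrum; there $P(B^\V_{ij})=0$ holds iff $P$ vanishes at the coefficients of the scalar operator $\D_v$ attached to each common eigenvector $v$. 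The decisive input is that every such $\D_v$ has an $N$-dimensional space of polynomial solutions lying in $\W$, i.e. $\D_v=\D_X$ for a unique $X\in\W$ --- the Bethe-ansatz identification, in which the generic fibre of the finite Wronski map is exhausted by the Bethe eigenvectors. Granting it, any $P\in\ker\pi_\W$ kills every $\D_v$, so $P(B^\V_{ij})=0$ generically and hence identically (the operator entries are polynomial in the $z_i$), which gives $\ker\pi_\W\subseteq\ker\pi_\V$.

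For the equality of characters I would prove that $\Vl$ is a free $\B_\V$-module of rank one, cyclically generated by its unique lowest-degree vector, whose degree is $\sum_{i=1}^N(i-1)\la_i$. Granting cyclicity, $\Vl\cong\B_\V$ as graded $\B_\V$-modules up to that degree shift, so $\ch(\B_\V)=q^{-\sum_{i=1}^N(i-1)\la_i}\,\ch(\Vl)$, which by \Ref{char V} equals the simplified right-hand side of \Ref{char O}, namely $\ch(\O_\W)$. Since $\zeta$ is a graded surjection between algebras whose graded components have equal finite dimensions, it is bijective in each degree, hence an isomorphism.

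The real content sits in two places: the rank-one freeness (cyclicity) of $\Vl$ over $\B_\V$, and the identification $\D_v=\D_X$ of each eigenvalue operator with a point of the Schubert cell. I expect to establish both first at the generic point of $\on{Spec}\C[z_1,\dots,z_n]^S$, where the Gaudin Hamiltonians are diagonalizable with simple spectrum and the number of Bethe eigenvectors matches both the rank of $\Vl$ and the finite degree of the Wronski map, and then to propagate the conclusions to all of $\Vl$ by a flatness and graded Nakayama argument over $\C[z_1,\dots,z_n]^S$. The delicate step, and the main obstacle, is controlling the non-generic specializations (coinciding $z_i$), ensuring that cyclicity and the operator identification survive degeneration.
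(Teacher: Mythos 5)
The paper itself gives no proof of this theorem; it is quoted from \cite{MTV3}, so the relevant comparison is with the argument of that reference. Your outline reproduces its strategy: present both algebras as quotients of the free algebra on the $b_{ij}$, get $\ker\pi_\W\subseteq\ker\pi_\V$ from the Bethe ansatz at generic $\bs z$ (the needed inputs are exactly Theorems \ref{thm BA completness} and \ref{thm X to Vn}(ii) together with the fact that $X_{(\bs z,\bs p)}\in\W$, all quoted in the paper), and then force injectivity of the resulting surjection by matching the graded characters \Ref{char V} and \Ref{char O} through the cyclicity of $\Vl$ over $\B_\V$. This first half is sound; note that for the kernel inclusion you only need each eigenvalue operator $\D_v$ to equal $\D_X$ for \emph{some} $X\in\W$, not that the Bethe eigenvectors exhaust the Wronski fibre.

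The genuine gap is in the second half. The cyclicity of $\Vl$ over $\B_\V$ with cyclic vector $v_1$ is not an auxiliary fact: it is equivalent to the surjectivity part of Theorem \ref{first1}, i.e.\ a result of the same depth as the statement being proved, and your proposed route to it does not close. Graded Nakayama over $\C[z_1,\dots,z_n]^S$ requires knowing that $v_1$ generates the fibre at the \emph{closed} point of the irrelevant ideal, $z_1=\dots=z_n=0$ --- precisely the maximally degenerate configuration invisible to the Bethe ansatz --- so ``establish it at the generic point and propagate by flatness and Nakayama'' uses the wrong input. The device that actually bridges generic to global is the graded determinant: the $\C[z]^S$-module map $F\mapsto\zeta(F)v_1$ goes between free graded modules with equal graded characters (up to the shift $\sum_i(i-1)\la_i$), so in homogeneous bases its determinant is homogeneous of degree zero, hence a constant, and generic bijectivity makes that constant nonzero. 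Even then, generic bijectivity is not free: it amounts to the claim that the specialization of $v_1$ at generic $\bs z$ has nonzero component along \emph{every} Bethe eigenvector (equivalently $S(v_1,\omega(\bs z,\bs p))\neq0$ for every critical orbit), a substantive nonvanishing statement that your outline nowhere addresses. Until that point and the determinant (or an equivalent) argument are supplied, the character comparison, and with it the injectivity of $\zeta$, remains unproved.
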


The degrees of elements of $(\V^S)^{sing}_{\bs\la}$ are not less than
$\sum_{i=1}^N(i-1)\la_i$ and the homogeneous component of
$(\V^S)^{sing}_{\bs\la}$ of degree $\sum_{i=1}^N(i-1)\la_i$ is
one-dimensional, see formula \Ref{char V}. Let $v_1\in
(\V^S)^{sing}_{\bs\la}$ be a nonzero vector of degree
$\sum_{i=1}^N(i-1)\la_i$.

\begin{thm} [\cite{MTV3}]
\label{first1}
The map
\bea
\eta \ :\ \O_{\W}\ \to\ (\V^S)^{sing}_{\bs\la}\ ,
\qquad
B^\W_{ij} \ \mapsto\ B^\V_{ij}v_1\ ,
\eea
is an isomorphism
of degree $\sum_{i=1}^N(i-1)\la_i$ of graded vector spaces. The maps\/
$\zeta$ and\/ $\eta$ intertwine the action of the multiplication
operators on $\O_{\W}$ and the action of the Bethe algebra $\B_{\V}$
on $(\V^S)^{sing}_{\bs\la}$, that is, for any $F,G\in\O_{\W}$, we have
\beq
\eta(FG)\,=\,\zeta(F)\,\eta(G)\,.
\eeq
\end{thm}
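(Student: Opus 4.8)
The plan is to realize $\eta$ as the composition of $\zeta$ with evaluation at $v_1$, and then to upgrade the algebra isomorphism of Theorem \ref{first} to a vector space isomorphism by a Hilbert series comparison. First I would \emph{define} $\eta$ by $\eta(F)=\zeta(F)\,v_1$ for $F\in\O_\W$; this is well defined by Theorem \ref{first}, it is linear, and since $\zeta$ is unital it gives $\eta(1)=v_1$ and $\eta(B^\W_{ij})=\zeta(B^\W_{ij})\,v_1=B^\V_{ij}v_1$, matching the asserted formula on generators. The intertwining property is then immediate from $\zeta$ being an algebra homomorphism: for $F,G\in\O_\W$,
\be
\eta(FG)=\zeta(FG)\,v_1=\zeta(F)\,\zeta(G)\,v_1=\zeta(F)\,\eta(G).
\ee
So the only real content is that $\eta$ is a bijection.

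Next I would check that $\eta$ is homogeneous of degree $s:=\sum_{i=1}^N(i-1)\la_i$. Indeed $\zeta$ is a graded isomorphism of degree $0$, $v_1$ has degree $s$, and each $B^\V_{ij}$ is homogeneous of the same degree $j-i$ as $B^\W_{ij}$; hence $\eta\big((\O_\W)_k\big)\subseteq(\Vl)_{k+s}$ for every $k$. In the bottom degree $\eta$ is already an isomorphism: $(\O_\W)_0=\C\cdot 1$ maps isomorphically onto the one-dimensional space $(\Vl)_s=\C\,v_1$.

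Now comes the reduction. Comparing the graded characters \Ref{char O} and \Ref{char V} gives $\ch(\Vl)=q^{\,s}\,\ch(\O_\W)$, and since $\O_\W$ is a polynomial algebra in finitely many variables of positive degree, every graded piece on both sides is finite dimensional; thus $\dim(\O_\W)_k=\dim(\Vl)_{k+s}$ for all $k$. Because $\eta$ is a homogeneous map of degree $s$ between graded spaces with equal finite graded dimensions, it is an isomorphism in each degree as soon as it is injective in each degree, equivalently surjective in each degree. Hence it suffices to prove a single one of these, and I would aim for surjectivity, i.e. that $v_1$ generates $\Vl$ as a module over $\B_\V$ (acting through $\zeta$). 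I note that the two options are in fact equivalent here: $\ker\eta$ is a graded ideal of $\O_\W$, since $\eta(F)=0$ forces $\eta(GF)=\zeta(G)\eta(F)=0$, and, using that $\B_\V$ is commutative, cyclicity of $v_1$ makes the annihilator of $v_1$ trivial — if $b\,v_1=0$ and every $m\in\Vl$ is $m=c\,v_1$, then $b\,m=c\,b\,v_1=0$, so $b=0$ in $\B_\V\subseteq\End(\Vl)$.

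The main obstacle is precisely this cyclicity (equivalently, that $\Vl$ is a free $\O_\W$-module of rank one on the generator $v_1$); this is the substantive input taken from \cite{MTV3}, and it does not follow formally from Theorem \ref{first} and the character identity alone, which only yield the coefficientwise inequality $\ch(\B_\V v_1)\le\ch(\Vl)$. To establish it I would degenerate the evaluation parameters $z_1,\dots,z_n$ to generic pairwise distinct values, where the Gaudin Bethe algebra acts on the finite-dimensional singular weight space with simple joint spectrum and the Bethe vectors form an eigenbasis; in that regime a vector is cyclic exactly when it has nonzero projection onto every joint eigenline, which one arranges for the specialization of $v_1$. One then transfers cyclicity back to the graded module $\Vl$ using the finiteness of the Wronski map together with a graded Nakayama argument, the lowest-degree normalization $\eta(1)=v_1\ne 0$ serving as the base case. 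Reconciling the non-graded specialization of the parameters with the graded module structure is the delicate step.
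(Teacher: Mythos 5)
The paper itself offers no proof of this theorem: it is imported verbatim from \cite{MTV3}, so there is no internal argument to compare yours against, and I assess the attempt on its own terms. Your formal reductions are all correct and efficiently arranged: defining $\eta(F)=\zeta(F)\,v_1$ makes the formula on generators and the intertwining identity immediate consequences of Theorem \ref{first}; the degree count is right; the character identity $\ch(\Vl)=q^{\sum_i(i-1)\la_i}\,\ch(\O_\W)$ from \Ref{char O} and \Ref{char V} does reduce bijectivity to injectivity (equivalently surjectivity) in each degree; and, via commutativity of $\B_\V$, everything comes down to cyclicity of $v_1$. I would add that the cleaner direction from there is injectivity rather than surjectivity: if $\zeta(F)\,v_1=0$ and the specialization of $v_1$ at a generic $\bs z$ is cyclic, then $\zeta(F)$ annihilates the whole generic fiber, hence $\zeta(F)=0$ on the free $\C[z_1,\dots,z_n]^S$-module $\Vl$ and $F=0$ by Theorem \ref{first}; this makes the ``graded Nakayama'' transfer you flag as delicate unnecessary.

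The genuine gap is the cyclicity claim itself. Your plan is to use that for generic $\bs z$ the Bethe vectors form an orthogonal joint eigenbasis with simple spectrum --- which is indeed available here from Theorems \ref{thm BA completness} and \ref{thm X to Vn}, independently of \cite{MTV3} --- and then to note that a vector is cyclic iff it has nonzero projection on every joint eigenline, ``which one arranges for the specialization of $v_1$.'' But $v_1$ is not at your disposal to arrange: it is the essentially unique lowest-degree vector of $\Vl$, and the assertion that $S(v_1(\bs z),\omega(\bs z,\bs p))\neq 0$ for every critical point $(\bs z,\bs p)$ is precisely the substantive content of the theorem; nothing in your argument establishes it. (Inside the present paper this nonvanishing is equivalent, via Theorem \ref{thm inv O_CC}, to the identity $f_{v_1}=1$, but that machinery already presupposes the isomorphisms being proved, so invoking it would be circular.) In \cite{MTV3} the generation of $\Vl$ by $v_1$ over the Bethe algebra is obtained by a separate algebraic argument, not by inspecting projections onto Bethe eigenvectors. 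As written, your proof establishes everything except the one nontrivial point.
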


\section{Critical points of the master function}
\label{Critical points of the master function}

\subsection{Master function}
\label{Master function}
Let $\bs\la=(\la_1,\dots,\la_N)$ be a partition of $n$.
Set\ $l_a\,=\sum_{b=a+1}^N \la_b\,,$ \ $a=0,\dots,N\,,$\
where $l_0=n$ \,and \,$l_N=0$. Denote $l=l_0+\dots +l_{N-1}$,
\ $\bs l=(l_0,\dots,l_{N-1})$.
Consider a set of $l$ variables
\bea
&
\bs T = (t^{(0)}_{1},\dots,t_{l_0}^{(0)},t^{(1)}_{1},\dots,t_{l_1}^{(1)},\dots,
t^{(N-1)}_{1},\dots,t_{l_{N-1}}^{(N-1)})\
\eea
and its subsets
$\bs t^0 = (t^{(0)}_{1},\dots,t_{l_0}^{(0)})$\ and\
$\bs t = (t^{(1)}_{1},\dots,t_{l_1}^{(1)},\dots,
t^{(N-1)}_{1},\dots,t_{l_{N-1}}^{(N-1)})$.
Consider the affine space $\C^l_{\bs T}=\C^l$ with coordinates
$\bs T = (\bs t^0,\bs t)$. The rational function $\Phi : \C^l\to\C$,
\bean
\label{master}
\Phi (\bs T) =
\prod_{a=1}^{N-1}\prod_{1\leq i<j\leq l_a} (t_i^{(a)}-t_j^{(a)})^{2}
\prod_{a=0}^{N-2}\prod_{i=1}^{l_a}\prod_{j=1}^{l_{a+1}}
(t_i^{(a)}-t_j^{(a+1)})^{-1} \
\eean
is called a {\it master function.}
The master functions arise in the hypergeometric solutions
of the KZ equations, see
\cite{M, SV, V1} and in the Bethe ansatz method for the Gaudin
model, see \cite{Ba, RV}.

The product of symmetric groups $S_{\bs l}=S_{l_0}\times \dots \times
S_{l_{N-1}}$ acts on the coordinates $\bs T$ by permutations of the coordinates with the
same upper index. The master function is
\linebreak
$S_{\bs l}$-invariant.

We consider the master function as a function of $\bs t$ depending on
the parameters $\bs t^{(0)}$.

A point $\bs T = (\bs t^{0},\bs t)\in \C^l$ is called a {\it critical
point} of $\log \Phi(\,\bs t^0\,,\cdot)$ if
\bea
\frac{\partial \phantom{a} }{\partial t_i^{(a)}} \log \Phi (\bs T)
\ =\ 0\ ,
\qquad
a = 1 , \dots , N-1,\quad i = 1 , \dots , l_a\ .
\eea
That is, a point $\bs T$ is a critical point
if the following system of $l-n$ equations is satisfied:
\bean
\label{BAE Gaud}
\sum_{j=1}^{l_{a-1}}\frac 1{t^{(a)}_i - t^{(a-1)}_{j}}\;-\,
\sum_{\satop{j=1}{j\neq i}}^{l_a}\frac 2{t^{(a)}_i - t^{(a)}_{j}}\;+\,
\sum_{j=1}^{l_{a+1}}\frac 1{t^{(a)}_i - t^{(a+1)}_{j}}\;=\,0\,,
\vv-.2>
\eean
here $a=1,\dots,N-1$, \,$j=1,\dots,l_a$.
In this definition we assume that all the denominators in
\Ref{BAE Gaud} are nonzero. In the Gaudin model, equations \Ref{BAE Gaud}
are called the Bethe ansatz equations.
For a point $\bs T\in\C^l$, denote
\bea
{\rm Hess}_{\bs t}\log \Phi (\bs T)\ = \ \det \left(\frac{\der^2\phantom{aaa}}
{\der t^{(a)}_i\der t^{(b)}_j}
\log \Phi (\bs T)\right)\ ,
\eea
where we take the determinant of the ${(l-n)\times(l-n)}$ matrix of second
derivatives of the function $\log \Phi$ with respect to all of the variables
$t^{(a)}_i$ with $a>0$.

For a fixed $\bs t^0$, the function $\log
\Phi(\,\bs t^0\,,\cdot)$ has finitely many critical
points, see \cite{ScV, MV2, MV3}.

\begin{thm}[\cite{ScV, MV3}]
\label{thm BA completness}
For generic $\bs t^0\in\C^n$, all critical points of the function
\linebreak
$\log \Phi(\,\bs t^0\,,\cdot)$ are nondegenerate. The number
of the $S_{l_1}\times \dots\times S_{l_{N-1}}$-orbits
of critical points equals $\dim\,(V^{\otimes n})^{sing}_{\bs\la}$.
\end{thm}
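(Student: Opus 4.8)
The plan is to translate both assertions into the geometry of the Wronski map $\W\to\C^n$ of Section~\ref{wronski}, then to extract the count from the character formulas \Ref{char O}, \Ref{char V} and the nondegeneracy from generic smoothness. Write $\bs t^0=(t^{(0)}_1,\dots,t^{(0)}_n)$, recalling $l_0=n$, and encode a critical point by the monic polynomials
\[
y_a(u)=\prod_{i=1}^{l_a}(u-t^{(a)}_i),\qquad a=1,\dots,N-1,
\]
together with the fixed $y_0(u)=\prod_{s=1}^n(u-t^{(0)}_s)$ and $y_N=1$. Since both the master function \Ref{master} and the equations \Ref{BAE Gaud} depend on the coordinates $t^{(a)}_i$ only through the $y_a$ and are invariant under $S_{l_1}\times\dots\times S_{l_{N-1}}$, an orbit of critical points is the same datum as a tuple $(y_1,\dots,y_{N-1})$ of monic polynomials of degrees $l_1,\dots,l_{N-1}$ with simple roots and with every denominator in \Ref{BAE Gaud} nonzero.

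First I would set up a bijection between critical orbits and fibers of the Wronski map. The equations \Ref{BAE Gaud} assert that each $y_a$ obeys, with its neighbors $y_{a-1},y_{a+1}$, a second-order reproduction relation; iterating the reproduction procedure of \cite{MV2} attaches to the tuple a single $N$-dimensional space $X\subset\C[u]$ that lies in $\W$ (it contains a polynomial of each degree $d_i=\la_i+N-i$) and has $\Wr_X(u)=\prod_{s=1}^n(u-t^{(0)}_s)$ up to a nonzero constant; conversely the flag basis \Ref{Basis} of such an $X$ recovers the $y_a$, hence the critical orbit. This gives
\[
\{\,\text{critical orbits of }\log\Phi(\bs t^0,\,\cdot\,)\,\}\ \longleftrightarrow\ \{\,X\in\W:\Wr_X=\textstyle\prod_{s=1}^n(u-t^{(0)}_s)\,\},
\]
the right-hand side being the fiber of the Wronski map over the point $\bs a$ determined by $\bs t^0$. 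Upgrading this to an isomorphism of schemes, so that local multiplicities are transported, is the step I expect to be the main obstacle; this is where the bulk of \cite{ScV, MV2} is spent.

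For the count I would compare module ranks rather than do enumerative geometry. The space $(\V^S)^{sing}_{\bs\la}$ is free over $\C[z_1,\dots,z_n]^S$ of rank $\dim(V^{\otimes n})^{sing}_{\bs\la}$, while $\O_\W$ is free over the subalgebra $\C[A_1,\dots,A_n]$ generated by the Wronskian coefficients $A_s$ of \Ref{Wr coef}, of rank equal to the degree of the Wronski map, the latter being finite and flat. Both base rings are graded polynomial rings, with $\deg z_i=1$ and $\deg A_s=s$, so each has Hilbert series $1/(q)_n$; and by \Ref{char O}, \Ref{char V} the two modules have equal graded characters. Multiplying the common character by $(q)_n$ and evaluating at $q=1$ yields the common rank, whence the degree of the Wronski map equals $\dim(V^{\otimes n})^{sing}_{\bs\la}$. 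As we are in characteristic zero, generic smoothness makes the finite dominant map $\W\to\C^n$ \'etale over a dense open set, so for generic $\bs t^0$ its fiber consists of exactly this many distinct reduced points; through the bijection there are then exactly $\dim(V^{\otimes n})^{sing}_{\bs\la}$ critical orbits.

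It remains to see nondegeneracy. A critical point of $\log\Phi(\bs t^0,\,\cdot\,)$ is nondegenerate, that is ${\rm Hess}_{\bs t}\log\Phi\neq0$, precisely when it is a reduced point of the scheme cut out by \Ref{BAE Gaud}. The scheme isomorphism of the first step matches this critical scheme, near a given critical point, with the Wronski fiber near the corresponding $X$; since that fiber is reduced for generic $\bs t^0$ by the \'etaleness just used, every critical point is reduced, hence nondegenerate. Equivalently, one identifies the Jacobian of the Wronski map at $X$ with the Hessian of $\log\Phi$ at the associated critical point up to a nonzero factor, so that unramifiedness of the Wronski map is exactly nonvanishing of the Hessian; carrying out this identification is the technical heart of the nondegeneracy claim.
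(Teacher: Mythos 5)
The paper does not prove this theorem: it is imported verbatim from \cite{ScV, MV3}, so there is no internal argument to compare against. Your outline does follow the route taken in that literature (reproduction procedure $\to$ point of $\W$, Wronski fibers, degree count via the coincidence of the characters \Ref{char O} and \Ref{char V}), and the counting step is essentially sound: both $\O_\W$ over $\C[A_1,\dots,A_n]$ and $(\V^S)^{sing}_{\bs\la}$ over $\C[z_1,\dots,z_n]^S$ are graded free modules over polynomial rings with Hilbert series $1/(q)_n$, the two characters agree up to the overall factor $q^{\sum(i-1)\la_i}$ (they are not literally equal, as you state, but this is harmless at $q=1$), and generic smoothness in characteristic zero gives reduced generic fibers.

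However, as a proof the proposal has genuine gaps, precisely at the two places you flag and at one you do not. First, the multiplicity-preserving (scheme-level) identification of the critical scheme of \Ref{BAE Gaud} with the Wronski fiber --- equivalently, the proportionality of ${\rm Hess}_{\bs t}\log\Phi$ at a critical point with the Jacobian of the Wronski map at the corresponding $X$ --- is the entire content of the nondegeneracy claim; asserting that ``this is where the bulk of \cite{ScV, MV2} is spent'' concedes the theorem rather than proving it. Second, for the count to be exact and not merely an upper bound on the number of orbits, you need the orbit-to-fiber map to be a bijection onto the \emph{whole} generic fiber, i.e.\ that every $X$ in a generic fiber is nice; this does follow from the density of nice points in $\W$ (cited in the paper to \cite{MTV6}) together with finiteness of the Wronski map (the image of the non-nice locus is then a proper closed subset of $\C^n$), but you never make this argument, and without it a non-nice fiber point would break the equality. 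Third, your freeness-and-rank computation uses that the Wronski map is finite as a morphism, not merely of finite degree as stated in Section \ref{wronski}; this is true (and standard via the grading and miracle flatness) but is an input you should either prove or cite. In short: correct skeleton, matching the strategy of the references the paper leans on, but the decisive analytic/algebraic identifications are placeholders.
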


Denote by $\Tilde C\subset \C^l_{\bs T}$ the union of all
critical points of the functions $\log \Phi(\,\bs t^0\,,\cdot)$
for all $\bs t^0\in \C^n$ with distinct coordinates $t^{(0)}_1,\dots,t^{(0)}_n$.
Denote by $C \subset \C^l_{\bs T}$ the Zariski closure of $\Tilde C$. The set $C$ is
$S_{\bs l}$-invariant.

\subsection{Factorization by $S_{\bs l}$} For $a=0,\dots,N-1$, let
$\sigma_1^{(a)},\dots,\sigma_{l_a}^{(a)}$ be the elementary symmetric functions
of $t_1^{(a)},\dots,t_{l_a}^{(a)}$. Denote by $\C^l_{\bs \Sig}=\C^l$ the affine
space with coordinates
\bea
\bs\Sigma = (\sigma_1^{(0)},\dots,\sigma_{l_0}^{(0)}, \sigma_1^{(1)},\dots,\sigma_{l_1}^{(1)}, \dots,
\sigma_1^{(N-1)},\dots,\sigma_{l_{N-1}}^{(N-1)})\ .
\eea
The space $\C^l_{\bs \Sig}$ is the quotient of
$\C^l_{\bs T}$ by the $S_{\bs l}$-action.

Denote by $\CC$ the image of $C$ under the natural projection $\C^l_{\bs T}\to
\C^l_{\bs \Sig}$. The set $\CC$ will be called the {\it
quotient critical set} of the master function. Let $\mc O_\CC$ be the algebra
of regular functions on $\CC$, that is, the restriction of $\C[\bs\Sigma]$ to
$\CC$.

The algebra $\C[\bs T]$ is a graded algebra with $\deg\,t^{(a)}_i=1$
for all $(a,i)$. The algebra $\C[\bs \Sigma]$ is a graded algebra
with $\deg\,\sigma^{(a)}_i=i$ for all $(a,i)$. Equations \Ref{BAE
Gaud} are homogeneous. Hence, $\CC$ is a quasi-homogeneous algebraic
set and the algebra $\mc O_\CC$ has a grading with
$\deg\,(\sigma_i^{(a)}|_{\CC})=i$.

\subsection{A map $\theta : \W \to \C^l_{\bs \Sig}$}
\label{sec map f omega cS}
For $X\in\W$\>, let $f_{1,X}(u),\dots,f_{N,X}(u)$ be the flag basis of $X$.
Introduce the polynomials
$y_{0,X}(u)\>,\,$ $y_{1,X}(u)\>,\,\dots\,,\,$ $y_{N-1,X}(u)$ \,by the formula
\vvn.3>
\be
y_{a,X}(u)\!\!\prod_{a<i<j\leq N} (d_i-d_j)\,
\,=\,\Wr(f_{a+1,X}(u),\dots,f_{N,X}(u))\,,\qquad a=0,\dots,N-1\,.
\ee
For each $a$, the polynomial $y_{a,X}(u)$ is a monic polynomial of
degree $l_a$, $y_{a,X}(u) = u^{l_a} + \sum_{i=1}^{l_a}(-1)^i\sigma^{(a)}_{i,X}u^{l_a-i}$.
Denote by $t_{1,X}^{(a)},\dots,t_{l_a,X}^{(a)}$ the roots of $y_{a,X}(u)$.
Then $\sigma_{1,X}^{(a)},\dots,\sigma_{l_a,X}^{(a)}$ are the elementary
symmetric functions of $t_{1,X}^{(a)},\dots,t_{l_a,X}^{(a)}$. The sequence
\beq
\label{t z}
\bs T_X\,=\,(t_{1,X}^{(0)},\dots,t_{l_0,X}^{(0)},\,\dots\,,
t_{1,X}^{(N-1)},\dots,t_{l_{N-1},X}^{(N-1)})\,
\eeq
will be called the {\it root coordinates} of $X$.
For every $a$ the numbers $t_{1,X}^{(a)},\dots,t_{l_a,X}^{(a)}$ are determined up to
a permutation.
Let $\bs \Sigma_X$ be the image
of $\bs T_X$ in $\C^l_{\bs \Sig}$.

A point $X\in\W$ will be called {\it nice} if all roots of the
polynomials $y_{0,X}(u)\>,\,y_{1,X}(u)\>,\,\,\dots\,,$
$\,y_{N-1,X}(u)$ are simple and for each $a=1,\dots,N-1$, the
polynomials $y_{a-1,X}(u)$ and $y_{a,X}(u)$ do not have common roots.
Nice points form a Zariski open subset of\/ $\W$,\ see \cite{MTV6}. If $X$
is nice, then the root coordinates $\bs T_X$ satisfy the critical
point equations \Ref{BAE Gaud}, see~\cite{MV2}.

Define a polynomial map
$\theta : \W \to \C^l_{\bs \Sig}$,
$X \mapsto \bs\Sigma_X$\;.
This map induces a graded algebra homomorphism
$\C[\bs\Sigma] \to \O_\W$.

\begin{lem}
\label{lem inclusion}
We have $\theta(\W) \subset \CC$.
\end{lem}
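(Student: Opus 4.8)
The plan is to verify the inclusion first on a Zariski dense subset of $\W$ and then extend it by continuity, using that $\CC$ is Zariski closed. Let $\W^{\mathrm{nice}}\subset\W$ be the set of nice points. If $X$ is nice, then by definition the roots of each $y_{a,X}(u)$ are simple; in particular the $l_0=n$ roots $t_{1,X}^{(0)},\dots,t_{l_0,X}^{(0)}$ of $y_{0,X}(u)$ are pairwise distinct, so the parameter vector $\bs t^0_X=(t_{1,X}^{(0)},\dots,t_{l_0,X}^{(0)})$ has distinct coordinates. By the cited fact of \cite{MV2}, the root coordinates $\bs T_X$ satisfy the critical point equations \Ref{BAE Gaud}, i.e. $\bs T_X$ is a critical point of $\log\Phi(\bs t^0_X,\,\cdot\,)$ for a parameter $\bs t^0_X$ with distinct coordinates. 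Directly from the definition of $\Tilde C$ this gives $\bs T_X\in\Tilde C$. Writing $\pi:\C^l_{\bs T}\to\C^l_{\bs\Sig}$ for the natural projection, so that $\theta(X)=\bs\Sigma_X=\pi(\bs T_X)$, and using $\Tilde C\subset C$, I obtain $\theta(X)\in\pi(C)=\CC$. Hence $\theta(\W^{\mathrm{nice}})\subset\CC$.

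To pass from $\W^{\mathrm{nice}}$ to all of $\W$, I would combine four observations: the cell $\W\cong\C^{|\bs\la|}$ is irreducible; the nice locus $\W^{\mathrm{nice}}$ is a nonempty Zariski open, hence dense, subset of $\W$ (openness from \cite{MTV6}, nonemptiness because niceness is a generic condition, cut out by the nonvanishing of finitely many discriminants and resultants in the coordinates $f_{ij}$); the map $\theta$ is polynomial and therefore Zariski continuous; and $\CC$ is Zariski closed. For this last point the key is that $\pi$ sends the $t^{(a)}_i$ to their elementary symmetric functions $\sigma^{(a)}_i$, so each $t^{(a)}_i$ is integral over $\C[\bs\Sigma]$ and $\pi$ is a finite morphism; finite morphisms are closed, so $\CC=\pi(C)$ is closed. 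Then $\theta^{-1}(\CC)$ is a Zariski closed subset of $\W$ containing the dense set $\W^{\mathrm{nice}}$, which forces $\theta^{-1}(\CC)=\W$, that is, $\theta(\W)\subset\CC$.

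The one ingredient that is not mere unwinding of definitions is the closedness of $\CC$, which rests on the finiteness of the symmetrization map $\pi$; this is the step I expect to be the main, though mild, obstacle. The other point deserving care is that a nice $X$ lands in $\Tilde C$ itself and not merely in its Zariski closure $C$: this is precisely what the simplicity of the roots of $y_{0,X}$ secures, since it makes $\bs t^0_X$ a vector of distinct parameters, as required in the definition of $\Tilde C$.
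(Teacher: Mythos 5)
Your proof is correct and follows essentially the same route as the paper, which simply asserts that the lemma "follows from the fact that the nice points of $\W$ are mapped to $\CC$"; you have filled in exactly the details that assertion leaves implicit (nice points land in $\Tilde C$ because simplicity and non-coincidence of the roots of the $y_{a,X}$ make the Bethe ansatz equations well-posed with distinct parameters $\bs t^0_X$, the nice locus is dense in the irreducible cell $\W$, and $\CC$ is Zariski closed since the symmetrization projection is finite). No gaps.
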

\begin{proof}

The lemma follows from the fact that the
nice points of $\W$ are mapped to $\CC$.
\end{proof}

\subsection{Differential operator $\D^\T$ and a map
$\iota : \CC \to \W$}
\label{Differential operator and a map}
Set
\be
\chi^a(u, \bs T)\,=\,\sum_{j=1}^{l_{a-1}}\,\frac1{u-t^{(a-1)}_j}\;-\,
\sum_{i=1}^{l_a}\,\frac 1 {u- t^{(a)}_j}\;,\qquad a=1,\dots,N\,,
\ee
and
\vvn-.5>
\be
\D^\T\,=\,\bigl(\der -\chi^1(u,\bs T)\bigr)\,\dots\,
\bigl(\der-\chi^N(u,\bs T)\bigr)\,.
\ee
We have
\vvn-.3>
\bea
\D^\T=\,\der^N+\sum_{i=1}^N\,B_i^\T(u)\,\der^{N-i}\>,
\qquad
B_i^\T(u)\,=\,\sum_{j=i}^\infty B_{ij}^\T\>u^{-j}\,,
\eea
and $B_{ij}^\T\in\C[\bs T]^{S_{\bs l}}=\C[\bs\Sigma]$, \,$i=1,\dots,N$, \,$j\in\Z_{\ge i}\>$.
For a point $\bs T \in \C^l$, denote by $\D_{\bs T}$ the specialization of
$\D^\T$ at $\bs T$. We call $\D_{\bs T}$ the differential operator associated
with a point $\bs T$.

If $\bs T=(\bs t^0,\bs t)\in \C^l$ is a critical point of
$\log \Phi(\,\bs t^0\,,\cdot)$, then the kernel $X_{\bs T}$
of $\D_{\bs T}$ consists of polynomials;
moreover, $X_{\bs T}$ is a point of $\W$, see \cite{MV2}.
The correspondence $\bs T \mapsto X_{\bs T}$ defines a rational map
$\iota : \CC \to \W$.

\subsection{Quotient critical set is a nonsingular subvariety}

\begin{thm}
\label{thm nonsing}
The quotient critical set
$\CC\subset \C^l_{\bs \Sig}$
is a nonsingular subvariety.
The map $\theta:\W\to\C^l_{\bs \Sig}$ is an embedding with
$\theta (\W) = \CC$.
The map $\iota : \CC \to \W$ is an isomorphism
and $\iota\theta\,=\,{\rm id}_{\W}$.
\end{thm}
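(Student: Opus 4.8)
The plan is to deduce the entire statement from a single identity of differential operators, namely that $\theta$ pulls $\D^\T$ back to $\D^\W$; in terms of coefficients,
\[
\theta^*(B^\T_{ij})\,=\,B^\W_{ij}\,,\qquad i=1,\dots,N,\ \ j\ge i\,,
\]
where $\theta^*:\C[\bs\Sigma]\to\O_\W$ is the homomorphism induced by $\theta$. Granting this, the rest is soft. Since the elements $B^\W_{ij}$ generate $\O_\W$, the identity shows that $\theta^*$ is surjective, so $\theta$ is a closed embedding and $\O_\W\cong\C[\bs\Sigma]/\ker\theta^*$. As $\W$ is an affine space, hence nonsingular, its isomorphic image $\theta(\W)$ is a nonsingular closed subvariety of $\C^l_{\bs\Sig}$ of dimension $|\bs\la|$.

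To prove the displayed identity I would argue by density. Both $\theta^*(B^\T_{ij})$ and $B^\W_{ij}$ lie in the integral domain $\O_\W$, so it suffices to check equality at every nice point $X\in\W$, these forming a Zariski-dense open subset. At a nice point the root coordinates $\bs T_X$ satisfy the Bethe ansatz equations \Ref{BAE Gaud}, so $\bs T_X$ is a critical point and $\D_{\bs T_X}$ is defined. By construction $\chi^a(u,\bs T_X)=\bigl(\log(y_{a-1,X}/y_{a,X})\bigr)'$, and the factorization $\D^\T=(\der-\chi^1(u,\bs T))\cdots(\der-\chi^N(u,\bs T))$ together with the correspondence of \cite{MV2} identifies the kernel $X_{\bs T_X}$ of $\D_{\bs T_X}$ with $X$ itself. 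Two monic order-$N$ operators with the same $N$-dimensional kernel coincide, so $\D_{\bs T_X}=\D_X$; comparing coefficients gives $B^\T_{ij}(\bs\Sigma_X)=B^\W_{ij}(X)$, which is the claimed equality evaluated at $X$. Identifying these two mutually inverse constructions (Wronskians of the flag basis versus the factored operator) is the main obstacle: it is where the Bethe-ansatz geometry of \cite{MV2} genuinely enters, whereas the remaining arguments are formal.

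It remains to pin down the image and the inverse. The inclusion $\theta(\W)\subset\CC$ is Lemma \ref{lem inclusion}, and $\theta(\W)$ is closed by the first paragraph. For the reverse inclusion, the same correspondence read in the other direction gives $\theta\iota=\mathrm{id}$ on a dense open subset of $\CC$: a critical point $\bs T$ for generic distinct $\bs t^0$ is nondegenerate by Theorem \ref{thm BA completness}, its operator $\D_{\bs T}$ has kernel $X_{\bs T}\in\W$, and the Wronskians of the flag basis of $X_{\bs T}$ recover $y_{0},\dots,y_{N-1}$, so that $\bs\Sigma_{X_{\bs T}}=\bs\Sigma$. By the definition of $\CC$ as the Zariski closure of the $S_{\bs l}$-quotient of $\Tilde C$, such points are dense in $\CC$; lying in the closed set $\theta(\W)$, they force $\CC\subset\theta(\W)$, hence $\theta(\W)=\CC$.

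Finally, $\theta:\W\to\CC$ is now an isomorphism of varieties, with inverse morphism $\theta^{-1}:\CC\to\W$. The rational map $\iota$ agrees with $\theta^{-1}$ on the dense open subset above, so $\iota=\theta^{-1}$ as rational maps; since $\theta^{-1}$ is an everywhere-defined morphism, $\iota$ is this isomorphism and $\iota\theta=\theta^{-1}\theta={\rm id}_\W$, which completes the proof.
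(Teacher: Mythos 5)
Your proposal is correct and follows essentially the same route as the paper: both rest on the correspondence of \cite{MV2} between nice points of $\W$ and critical points (giving $\iota\theta={\rm id}_\W$ generically, hence everywhere, and $\Tilde\CC\subset\theta(\W)$), on the density of $\Tilde\CC$ in $\CC$ combined with closedness of $\theta(\W)$, and on the fact that the elements $B^\W_{ij}$ generate $\O_\W$. The only organizational difference is that you deduce closedness of the image from surjectivity of $\theta^*$ (via the identity $\theta^*(B^\T_{ij})=B^\W_{ij}$, checked at nice points), whereas the paper gets it from finiteness of $\theta$ onto its image before invoking the generators.
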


\begin{proof}
The map $\theta$, considered as a map from $\W$ to $\theta(\W)$ is finite.
The set $\theta(\W)$ is Zariski closed since $\W$ is Zariski closed.
We know from \cite{MV2} that $\theta(\W)$ contains the subset $\Tilde\CC\subset\CC$,
the image
of nondegenerate critical points.
We have $\theta(\W)=\CC$, since $\CC$ is the
Zariski closure of $\Tilde\CC$ and $\theta(\W)$ is Zariski
closed.

The fact that $\iota\theta\,=\,{\rm id}_{\W}$ at generic points of $\W$
is proved in \cite{MV2}. Therefore,
$\iota\theta\,=\,{\rm id}_{\W}$ for all points of $\W$.

Consider the algebra homomorphism $\iota^* : \O_\W \to \O_\CC$ induced by
$\iota$.
Under the map $\iota^*$ the elements $B^\W_{ij}$ are mapped to the polynomials
$B^\T_{ij} \in \C[\bs\Sigma]$ restricted to $\CC$.
Since the elements $B^\W_{ij}$ generate $\O_\W$, the map
$\theta : \W\to \C^l_{\bs \Sig}$ is an embedding and
the map $\iota : \CC\to \W$ is an isomorphism.
\end{proof}

\begin{cor}
\label{cor iso O to T}
The map $\iota^* : \O_\W \to \O_\CC,\ B^\W_{ij} \mapsto B^\T_{ij}|_\CC$,
is an isomorphism of graded algebras. In particular, the elements
$B^\T_{ij}|_\CC$ generate $\O_\CC$.
\end{cor}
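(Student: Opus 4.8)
This corollary is a coordinate-algebra reformulation of the variety isomorphism already established in Theorem~\ref{thm nonsing}, so the plan is to transport that isomorphism to the level of algebras and then attend to the two bookkeeping points: the explicit formula on generators and compatibility with the gradings. Since both $\W$ and $\CC\subset\C^l_{\bs\Sig}$ are affine varieties and $\iota:\CC\to\W$ is an isomorphism by Theorem~\ref{thm nonsing}, the pullback $\iota^*:\O_\W\to\O_\CC$ is automatically an isomorphism of commutative algebras; its inverse is $\theta^*$, because $\iota\theta={\rm id}_\W$ together with invertibility of $\iota$ gives $\theta\iota={\rm id}_\CC$, whence $\theta^*\iota^*={\rm id}_{\O_\W}$ and $\iota^*\theta^*={\rm id}_{\O_\CC}$.

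Next I would pin down $\iota^*$ on the generators $B^\W_{ij}$. By the construction in Section~\ref{Differential operator and a map}, a point of $\CC$ with root coordinates $\bs T$ is sent by $\iota$ to the kernel $X_{\bs T}$ of $\D_{\bs T}$, and, since $\D^\T=(\der-\chi^1)\dots(\der-\chi^N)$ is the monic order-$N$ operator with that kernel, $X_{\bs T}$ is precisely the point of $\W$ whose associated operator $\D_{X_{\bs T}}$ equals $\D_{\bs T}$. Comparing $\D^\W=\der^N+\sum_i B^\W_i(u)\der^{N-i}$ with $\D^\T=\der^N+\sum_i B^\T_i(u)\der^{N-i}$ coefficient by coefficient in $u^{-1}$ then yields $\iota^*(B^\W_{ij})=B^\T_{ij}|_\CC$, the identity in the statement (this is exactly the computation recorded inside the proof of Theorem~\ref{thm nonsing}).

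For the grading I would check that $B^\T_{ij}$ is homogeneous of degree $j-i$ in $\C[\bs\Sigma]$, matching the known degree $j-i$ of $B^\W_{ij}$ in $\O_\W$. A direct weight count does this: under the simultaneous scaling $u\mapsto cu$, $t^{(a)}_i\mapsto c\,t^{(a)}_i$ each $\chi^a(u,\bs T)$ is homogeneous of weight $-1$, so every factor $\der-\chi^a$, and hence $\D^\T$ itself, scale with weights $-1$ and $-N$; writing $B^\T_i(u)=\sum_{j\ge i}B^\T_{ij}u^{-j}$ this forces $B^\T_i(u)$ to have weight $-i$, and extracting the coefficient of $u^{-j}$ gives $\deg B^\T_{ij}=j-i$ once one recalls that $\deg t^{(a)}_i=1$ corresponds to $\deg\sigma^{(a)}_i=i$ under $\C[\bs T]^{S_{\bs l}}=\C[\bs\Sigma]$. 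Thus $\iota^*$ carries each homogeneous generator $B^\W_{ij}$ to a homogeneous element of the same degree, so it is a graded homomorphism and, being bijective, a graded isomorphism.

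Finally, the ``in particular'' clause is immediate: the $B^\W_{ij}$ generate $\O_\W$ (Section~\ref{funct on schub}), and surjectivity of $\iota^*$ forces their images $B^\T_{ij}|_\CC$ to generate $\O_\CC$. Essentially all the content sits in Theorem~\ref{thm nonsing}; the only step of this corollary that is not purely formal is the grading verification, and even there the sole mild point is to confirm that the root-coordinate construction underlying $\theta$ and $\iota$ is equivariant for the $\C^\times$-scalings, so that the degree $j-i$ measured on $\W$ and the degree $j-i$ measured on $\CC$ genuinely coincide.
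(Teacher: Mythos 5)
Your proposal is correct and follows essentially the same route as the paper: Corollary \ref{cor iso O to T} is drawn directly from Theorem \ref{thm nonsing}, whose proof already records that $\iota^*$ sends $B^\W_{ij}$ to $B^\T_{ij}|_\CC$ and that these elements generate. Your added verifications (the coefficient-by-coefficient identification of $\D_{X_{\bs T}}$ with $\D_{\bs T}$, and the weight count giving $\deg B^\T_{ij}=j-i$) merely make explicit what the paper leaves implicit.
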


\section{Universal weight function and Bethe vectors}
\label{Universal weight function and Bethe vectors}

We remind a construction of a rational map $\omega:\C^{l}\to
(V^{\otimes n})_{\bs\la}$, called the {\it universal weight function},
see~\cite{M, SV}, cf. \cite{RSV}.

A basis of $V^{\otimes n}$ is formed by the vectors
$e_Jv\,=\,e_{j_1,1}v_+\otimes \dots\otimes e_{j_n,1}v_+\,,
$\
where $J=(j_1,\dots,j_n)$ and $1\leq j_a\leq N$ for $a=1,\dots,N$. A basis
of $(V^{\otimes n})_{\bs\la}$ is formed by the vectors $e_Jv$ such that
$\#\{a\ |\ j_a>i\}\,=\,l_i$ for every $i=1,\dots,N-1$.
Such a multi-index $J$ will be called admissible.

The universal weight function has the form
$\omega(\bs T)\,=\,\sum_J\,\omega_J(\bs T) e_Jv$\
where the sum is over the set of all admissible $J$,
and the functions $\omega_J(\bs T)$ are defined below.

\sskip
For an admissible $J$ and \>$i=1,\ldots,N-1$, define
$
A_i(J)\,=\,\{\,a\ |\ 1\le a\le n\,,\ \ 1\le i<j_a\,\}\,.$\
Then $|\>A_i(J)\>|\,=\,l_i$.

\sskip
Let $\Gamma(J)$ be the set of sequences \,$\bs\gamma=
(\gamma_1,\dots,\gamma_{N-1})$
of bijections \,$\gamma_i:A_i(J)\to\{1,\dots,l_i\}$, \>$i=1,\dots,N-1$.
Then $|\>\Gamma(J)\>|\>=\prod_{i=1}^{N-1}l_i!$~.

\sskip
For $a\in A_1(J)$ and $\bs\gamma\in \Gamma(J)$, introduce a rational function
\bea
\omega_{a,\bs\gamma}(\bs T)\,=\,\frac1{t^{(1)}_{\gamma_1(a)}-t^{(0)}_a}\;
\prod_{i=2}^{j_a-1}\frac1{t^{(i)}_{\gamma_i(a)}-t^{(i-1)}_{\gamma_{i-1}(a)}}\ .
\eea
Define
\bean
\label{wf}
\omega_J(\bs T)\,=\,
\sum_{\bs\gamma\in \Gamma(J)}\,\prod_{a\in A_1(J)}\,\omega_{a,\bs\gamma} \ .
\eean

\begin{thm}
\label{thm X to Vn}
Let $\bs T=(\bs z,\bs p)$ be a nondegenerate critical point
of the function $\log \Phi(\,\bs z\,,\cdot)$, here $\bs z=(z_1,\dots,z_n)$ lies in
$\C^n$ and $\bs p$ lies in $\C^{l-n}$ with coordinates $t^{(i)}_j$, $i>0$.
Consider the value $\omega(\bs z,\bs p)$ of the universal weight function
$\omega:\C^{l}\to(V^{\otimes n})_{\bs\la}$ at\/ $(\bs z,\bs p)$. Consider
$V^{\otimes n}$ as the $\glNt$-module $\otimes_{s=1}^nV(z_s)$\>.
Then
\begin{enumerate}
\item[(i)]
The vector $\omega(\bs z,\bs p)$ belongs to $(V^{\otimes n})_{\bs\la}^{sing}$.

\item[(ii)] The vector $\omega(\bs z,\bs p)$ is an eigenvector of the Bethe algebra
$\B$, acting on $\otimes_{s=1}^nV(z_s)$. Moreover,
$\D_{\omega(\bs z,\bs p)}=\D_{(\bs z,\bs p)}$, where $\D_{\omega(\bs z,\bs p)}$
and $\D_{(\bs z,\bs p)}$ are the differential operators associated with the eigenvector
$\omega(\bs z,\bs p)$ and the point $(\bs z,\bs p)\in \C^l$, respectively, see Sections
\ref{ass with eigen} and \ref{Differential operator and a map}.
\item[(iii)]
Let $S$ be the tensor Shapovalov form on $V^{\otimes n}$,
then
\bea
S(\omega(\bs z,\bs p),\omega(\bs z,\bs p))\ = \
{\rm Hess}_{\bs t}\log \Phi (\bs z,\bs p)\ .
\eea
\item[(iv)]
If $(\bs z,\bs p)$ and $(\bs z,\bs p')$ are two
nondegenerate critical points of the function $\log \Phi(\,\bs
z\,,\cdot)$, which lie in different $S_{l_1}\times \dots\times
S_{l_{N-1}}$-orbits, then $S(\omega(\bs z,\bs p),\omega(\bs z,\bs p'))\,=\,0$.

\end{enumerate}
\end{thm}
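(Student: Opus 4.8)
The plan is to treat the four parts in the order (i), (ii), (iv), (iii), since the orthogonality (iv) is a formal consequence of the eigenvector property (ii), while the norm identity (iii) is the one genuinely analytic input. Membership $\omega(\bs z,\bs p)\in(V^{\otimes n})_{\bs\la}$ is immediate from the construction: the sum \Ref{wf} runs only over admissible $J$, and every $e_Jv$ has weight $\bs\la$. To get (i) it then suffices to show $e_{i,i+1}\,\omega(\bs z,\bs p)=0$ for $i=1,\dots,N-1$, as these raising generators generate the whole raising subalgebra. The coefficient of $u^{-1}$ in \Ref{action} shows that the constant element $e_{i,i+1}$ acts by the coproduct $\sum_s 1\otimes\dots\otimes e_{i,i+1}\otimes\dots\otimes 1$, which on each $e_Jv$ lowers one index from $i{+}1$ to $i$. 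I would carry out the standard Schechtman--Varchenko/Reshetikhin--Varchenko computation: regrouping the telescoping chains of fractions in $\omega_{a,\bs\gamma}$ collapses $e_{i,i+1}\,\omega(\bs T)$ into $\sum_{k=1}^{l_i}\bigl(\der_{t^{(i)}_k}\log\Phi\bigr)\,\widetilde\omega_k$, where $\widetilde\omega_k$ is a weight function with the variable $t^{(i)}_k$ deleted. At a critical point the Bethe equations \Ref{BAE Gaud} are exactly $\der_{t^{(i)}_k}\log\Phi=0$, so $e_{i,i+1}\,\omega=0$ and $\omega(\bs z,\bs p)$ is singular.

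For (ii), once $\omega(\bs z,\bs p)\in(V^{\otimes n})^{sing}_{\bs\la}$ the commutative algebra $\B$ acts on it. That it is a joint eigenvector, with the asserted eigenvalue, comes from the factorization of the universal operator $\D$ on Bethe vectors. I would compute the action of the series coefficients $B_i(u)$ on $\omega(\bs z,\bs p)$ via \Ref{action} and show that $\D$ acts as the scalar operator obtained by specializing $\D^\T=(\der-\chi^1(u,\bs T))\cdots(\der-\chi^N(u,\bs T))$ at $\bs T=(\bs z,\bs p)$; equivalently, invoke \cite{MV2}, where $\D_{\bs T}$ is built precisely so that its kernel is the polynomial space $X_{\bs T}\in\W$ recovered from the critical point and the weight function is its eigenvector. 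This yields $B_i(u)\,\omega=\bigl(B^\T_i(u)|_{(\bs z,\bs p)}\bigr)\,\omega$, i.e. $\D_{\omega(\bs z,\bs p)}=\D_{(\bs z,\bs p)}$.

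For (iv), I would use that the Bethe algebra $\B$ is self-adjoint with respect to the tensor Shapovalov form $S$. Indeed the defining relation $S(e_{ij}u,v)=S(u,e_{ji}v)$ exhibits the Shapovalov anti-involution as $e_{ij}\mapsto e_{ji}$, under which the matrix defining $\D$ is transposed; the resulting invariance of the generators $B_{ij}$ (the row/column-determinant symmetry for this particular matrix, established in the MTV references) makes each $B_{ij}$ self-adjoint. Now two nondegenerate critical points lying in distinct $S_{l_1}\times\dots\times S_{l_{N-1}}$-orbits project to distinct points of $\CC$, so by Theorem \ref{thm nonsing} and Corollary \ref{cor iso O to T} they give distinct differential operators $\D_{(\bs z,\bs p)}\neq\D_{(\bs z,\bs p')}$; hence by (ii) they are eigenvectors of $\B$ with distinct eigencharacters. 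Eigenvectors of a self-adjoint commutative algebra belonging to different characters are orthogonal, which gives $S(\omega(\bs z,\bs p),\omega(\bs z,\bs p'))=0$.

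The main obstacle is (iii), the Gaudin norm formula. Here I would prove directly that the diagonal value $S(\omega(\bs z,\bs p),\omega(\bs z,\bs p))$ equals the Hessian determinant ${\rm Hess}_{\bs t}\log\Phi(\bs z,\bs p)$. The plan is to expand $S(\omega(\bs T),\omega(\bs T))$ using \Ref{wf} into a double sum over $\Gamma(J)$, organize it as a determinant, and match it entry-by-entry with the $(l-n)\times(l-n)$ matrix of second derivatives $\der^2_{t^{(a)}_i t^{(b)}_j}\log\Phi$, the nondegeneracy hypothesis guaranteeing this determinant is nonzero. The delicate point is the combinatorial bookkeeping of the sums over admissible $J$ and bijections $\bs\gamma$, together with showing that the terms which are not manifestly entries of the Hessian vanish precisely on account of \Ref{BAE Gaud}. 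This is Varchenko's norm identity and is the only part of the theorem not formally reducible to the earlier structural results; I expect it to be the technical heart of the argument.
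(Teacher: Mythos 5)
You should first be aware that the paper does not prove Theorem \ref{thm X to Vn} at all: it is stated as a compendium of known results, with part (i) attributed to \cite{Ba}, \cite{RV} and Theorem 6.16.2 of \cite{SV}, part (ii) to \cite{MTV1}, part (iii) to \cite{MV3} and \cite{V2}, and part (iv) to \cite{V2} and \cite{MTV3}. Measured against that, your proposal is a faithful reconstruction of the standard arguments for (i) and (iv). For (i), the identity $e_{i,i+1}\,\omega(\bs T)=\sum_k\bigl(\der_{t^{(i)}_k}\log\Phi\bigr)\widetilde\omega_k$ obtained by telescoping the chains in $\omega_{a,\bs\gamma}$ is exactly the Schechtman--Varchenko/Reshetikhin--Varchenko computation, and combined with \Ref{BAE Gaud} it gives singularity. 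For (iv), the self-adjointness of the generators $B_{ij}$ under the Shapovalov anti-involution $e_{ij}\mapsto e_{ji}$ plus the injectivity of the orbit-to-operator map (which is available non-circularly from Theorem \ref{thm nonsing}, itself resting on \cite{MV2}) is precisely the route of \cite{MTV3}; your reduction to distinct eigencharacters of a commutative self-adjoint algebra is correct.

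The two places where your text is a plan rather than a proof are (ii) and, more seriously, (iii). For (ii) you essentially defer to \cite{MV2} and to the construction of $\D^{\T}$; that is acceptable since the paper does the same via \cite{MTV1}, but be aware that the statement that the full Bethe algebra (all higher Hamiltonians $B_{ij}$, not just the quadratic ones) acts on $\omega(\bs z,\bs p)$ by the coefficients of $\D_{(\bs z,\bs p)}$ is the main theorem of \cite{MTV1} and is not a short computation from \Ref{action}. For (iii), the strategy you describe --- expand $S(\omega,\omega)$ via \Ref{wf} into a double sum, ``organize it as a determinant, and match it entry-by-entry with the Hessian'' --- would not go through as stated: the individual terms of the expansion do not correspond to entries of the Hessian matrix, and the actual proofs in \cite{MV3} and \cite{V2} proceed quite differently (in \cite{V2} via the contravariant form on the Orlik--Solomon--type complex of the arrangement, identifying both sides with a common determinant only after substantial combinatorial work modulo the Bethe equations). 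So if you intend (iii) to be self-contained rather than a citation, the argument as sketched has a genuine gap; if, like the authors, you are content to quote \cite{MV3, V2}, the proposal is consistent with the paper.
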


Part (i) is proved in~\cite{Ba} and~\cite{RV}. Part (i) also follows
directly from Theorem~6.16.2 in~\cite{SV}. Part (ii) is proved in
\cite{MTV1}. Part (iii) is proved in \cite{MV3, V2}. Part (iv) is
proved in \cite{V2} and also follows from \cite{MTV3}.

The vector $\omega(\bs z,\bs p)$ is called the {\it
Bethe vector} corresponding
to a critical point $(\bs z,\bs p)$.

\section{Bethe vector averaging maps}
\label{Polynomiality}

\label{sec map v_f}
Consider $\C^l$ with coordinates $\bs T = (\bs t^0,\bs t)$.
We denote the variables $\bs t^0=(t_1^{(0)},\dots,t_n^{(0)})$
also by $\bs z = (z_1,\dots,z_n)$.

Let $\bs z$ be a generic point of $\C^n$ with distinct coordinates and
such that all the critical points of the function
$\log \Phi(\,\bs z\,,\cdot)$ are nondegenerate.
The critical set $C_{\bs z}$
of
$\log \Phi(\,\bs z\,,\cdot)$
consists of $\dim \,(V^{\otimes n})^{sing}_{\bs\la}$
$S_{l_1}\times \dots\times S_{l_{N-1}}$-orbits.
Each orbit has $l_1!\cdots l_{N-1}!$ points.
For any $F \in \C[\bs T]^{S_{\bs l}}=\C[\bs\Sigma]$, \ let us define
\bean
\label{B sum}
v_F(\bs z)\ = \ \frac 1{l_1!\dots l_{N-1}!}
\sum_{(\bs z,\bs p )\in C_{\bs z}} \frac {F(\bs z,\bs p)\,\omega(\bs z,\bs p)}
{{\rm Hess}_{\bs t}\log \Phi (\bs z,\bs p)}\ .
\eean
The term of this sum corresponding to a critical point $(\bs z,\bs p)$
can be written as the following integral, see Chapter 5 of
\cite{GH}. Choose a small neighborhood $U$ of $\bs p$ in
$\C^{l-n}$. Define a torus $\Gamma_{\bs z,\bs p}$ in $U$ by $l-n$
equations $|\Phi^a_j(\bs z,\bs t)|=\epsilon^a_j$ where $\Phi^a_j$ are
derivatives of $\log \Phi(\,\bs z\,,\cdot)$ with respect to the
variables $t^{(a)}_j$, $a>0$, and where $\epsilon^a_i$ are small
positive numbers. Then
\bean
\label{integral}
\frac {F(\bs z,\bs p)\,\omega(\bs z,\bs p)}
{{\rm Hess}_{\bs t}\log \Phi (\bs z,\bs p)}\ =\
\frac 1{(2\pi i)^{l-n}} \int_{\Gamma_{\bs z,\bs t}}
\frac {F(\bs z,\bs t)\,\omega(\bs z,\bs t)\,d\bs t}
{\prod_{a,j} \Phi_j^a(\bs z,\bs t)}\ .
\eean
The $l_1!\cdots l_{N-1}!$ terms of the sum in \Ref{B sum}
corresponding to a single $S_{l_1}\times \dots\times
S_{l_{N-1}}$-orbit are all equal due to the $S_{l_1}\times
\dots\times S_{l_{N-1}}$-invariance of $\Phi$, $\omega$ and $F$.

The correspondence $\bs z \mapsto v_F(\bs z)$ defines a map
$ v_F: \C^n \to (V^{\otimes n})^{sing}_{\bs\la}$ which will be called
{\it a Bethe vector averaging map}.

The map $v_F$ is a rational map. Indeed, the map
is well defined on a Zarisky open subset of
$\C^n$ and has bounded growth as the argument approached the possible singular points
or infinity.

\begin{thm}
\label{thm conj}
For any $F \in \C[\bs\Sigma]$, the Bethe vector averaging map $v_F$
is a polynomial map.
\end{thm}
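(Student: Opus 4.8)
The plan is to exhibit $v_F$ as a rational map with no poles in codimension one, and then to invoke normality of $\C^n$: a rational map into the vector space $(V^{\otimes n})^{sing}_{\bs\la}$ that is regular away from a subset of codimension $\ge 2$ is automatically a polynomial map. Thus, granting the rationality already established, it suffices to show that $v_F$ stays bounded near the generic point of every hypersurface $D\subset\C^n$.

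First I would reduce to the case $F=1$. Set $\hat F=\zeta\bigl((\iota^*)^{-1}(F)\bigr)\in\B_\V$, using $\iota^*:\O_\W\to\O_\CC$ from Corollary \ref{cor iso O to T} and $\zeta:\O_\W\to\B_\V$ from Theorem \ref{first}. By Theorem \ref{thm X to Vn}(ii) the Bethe vector $\omega(\bs z,\bs p)$ is a $\B$-eigenvector with $\D_{\omega(\bs z,\bs p)}=\D_{(\bs z,\bs p)}$, so the eigenvalue of $B^\V_{ij}$ on $\omega(\bs z,\bs p)$ is the coefficient $B^\T_{ij}(\bs z,\bs p)$; since $\iota^*(B^\W_{ij})=B^\T_{ij}|_\CC$ and $\zeta(B^\W_{ij})=B^\V_{ij}$, the eigenvalue of $\hat F$ on $\omega(\bs z,\bs p)$ is exactly the value $F(\bs z,\bs p)$. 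Pulling the fixed operator $\hat F$ out of the sum \Ref{B sum} gives the identity of rational maps $v_F=\hat F\,v_{\mathbf 1}$, where $v_{\mathbf 1}$ is the averaging map for $F=1$. Because $\B_\V$ acts $\C[z_1,\dots,z_n]^S$-linearly on the free module $\Vl$ (multiplication by each $z_i$ commutes with the action \Ref{action}), the specialized operator $\hat F$ has matrix entries polynomial in $\bs z$; hence it is enough to prove $v_{\mathbf 1}$ is polynomial.

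Next I would exploit the contour representation \Ref{integral}: each summand of $v_{\mathbf 1}(\bs z)$ is the Grothendieck residue at a critical point of the meromorphic $(l-n)$-form $\Omega_{\bs z}=\omega(\bs z,\bs t)\,d\bs t\big/\prod_{a,j}\Phi^a_j(\bs z,\bs t)$. Fix a generic point $\bs z_0$ of a candidate pole hypersurface $D$ and a small transverse disk; along it the critical points vary holomorphically and degenerate at $\bs z_0$ (several collide, or the configuration becomes singular). Enclosing each degenerating cluster in a fixed real torus $\Gamma$ that avoids the polar divisor $\{\prod_{a,j}\Phi^a_j=0\}$ and the diagonals where $\omega$ is singular, the partial sum of residues over the cluster equals $(2\pi i)^{-(l-n)}\int_\Gamma\Omega_{\bs z}$, which is holomorphic in $\bs z$ so long as no critical point and no pole of $\omega$ crosses $\Gamma$. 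Summing over a finite cover of $C_{\bs z}$ by such tori exhibits $v_{\mathbf 1}$ as holomorphic, hence bounded, near $\bs z_0$, and the Riemann extension theorem then rules out a pole along $D$.

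The hard part will be precisely the hypothesis used in the last step: that the tori $\Gamma$ can be chosen, uniformly as $\bs z\to\bs z_0$, to avoid both the critical locus and the singularities of $\omega$. This breaks down at those $\bs z_0$ where, as $\bs z\to\bs z_0$, critical points run off to infinity in the $\bs t$-variables or approach the diagonals $t^{(i)}_\alpha=t^{(i\pm 1)}_\beta$ on which $\omega$ blows up — in particular when $\bs z$ tends to a diagonal $z_i=z_j$. Controlling these degenerations is the technical heart of the argument: one compactifies the $\bs t$-space and applies the global residue theorem, balancing the total of the finite residues against residues at infinity and showing the latter contribute only a polynomial (or cancel). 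Reconciling the blow-up of individual residues with the boundedness of their sum under escaping and colliding critical points is the main obstacle, and is what Section \ref{proofs} carries out.
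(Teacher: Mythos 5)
Your setup coincides with the paper's: rationality plus regularity outside a codimension-two subset implies polynomiality, and the three candidate pole divisors you identify (coincidence of coordinates of $\bs z$, degeneration of a critical point, and collision of critical coordinates with one another or with the $z_j$) are exactly the cases \Ref{i}--\Ref{iii} of Section \ref{Possible places of irregularity}. Your treatment of case \Ref{ii} by a fixed local torus serving all nearby $\bs z$ is the paper's argument, and the reduction to $F=1$ via $v_F=\hat F\,v_{\mathbf 1}$ with $\hat F=\zeta((\iota^*)^{-1}(F))$ is a valid (though ultimately unnecessary) simplification, legitimately available since Corollary \ref{cor iso O to T} precedes the theorem. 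But the proof stops exactly where the real work begins. For case \Ref{iii} --- and for $z_i=z_j$ --- the individual residues blow up because $\omega(\bs z,\bs t)$ itself has poles on the colliding diagonals and the Hessian degenerates there, so no torus avoiding the singular locus of the integrand exists; you acknowledge this and propose to compactify the $\bs t$-space and invoke the global residue theorem, but you neither carry this out nor is it the mechanism the paper uses. As written, the argument only disposes of case \Ref{ii}.

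What the paper actually does for case \Ref{iii} rests on two ingredients absent from your proposal. First, the identity $S(\omega,\omega)={\rm Hess}_{\bs t}\log\Phi$ of Theorem \ref{thm X to Vn}(iii) converts the problem: along a degenerating family with $\omega(\bs T(s))=w_\al s^\al+o(s^\al)$ and $S(w_\al,w_\al)\ne0$ --- the latter guaranteed by non-criticality of the Wronski map at the limit point, Lemmas \ref{lem Shap and Bethe} and \ref{lem not crit} --- one gets ${\rm Hess}\sim s^{2\al}$, so the ratio $\omega/{\rm Hess}$ has order $s^{-\al}$ and regularity reduces to showing $\al\le0$, i.e., that the Bethe vector does not tend to zero. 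Second, one must classify the codimension-one collision strata $\W_{\bs d}$ (Lemma \ref{lem list}) and verify $\al\le0$ on each by an explicit Newton-polygon computation of the asymptotics of the root coordinates and of a well-chosen coordinate $\omega_J$ of the Bethe vector; this occupies most of Section \ref{proofs} and is carried out there for $N=2,3$. Without a substitute for these steps your proposal has a genuine gap precisely at what you yourself label the main obstacle; case \Ref{i} is likewise left unresolved (the paper settles it by citing Lemmas 4.3 and 4.4 of [MV2]).
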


Theorem \ref{thm conj} is proved in Section \ref{proofs}.

\section{Quotient critical set and Bethe algebra}
\label{Factorized critical set and Bethe algebra}

\subsection{Construction of isomorphisms}
Recall that $\C[\bs\Sigma]$ is graded by $\deg\sigma^{(a)}_j=j$.
For any $F \in \C[\bs\Sigma]$,
consider the Bethe vector averaging map $v_F :\C^n \to (V^{\otimes n})^{sing}_{\bs\la}$.

\begin{lem}
\label{lem homog}
If $F$ is quasi-homogeneous and $\deg F =d$, then $v_F$ is homogeneous and
\\ $\deg v_F\ =\ d +
l-n\ =\ d+ \sum_{i=1}^N(i-1)\la_i$.
\qed
\end{lem}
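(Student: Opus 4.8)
The plan is to track the scaling behavior of each ingredient entering the definition \Ref{B sum} of $v_F(\bs z)$ and combine them. The key observation is that the master function $\Phi(\bs T)$ in \Ref{master} is quasi-homogeneous: under the rescaling $\bs T \mapsto s\,\bs T$ (that is, $t^{(a)}_i \mapsto s\,t^{(a)}_i$ for all $a,i$, including $a=0$), the factors $(t_i^{(a)}-t_j^{(a)})^2$ scale by $s^2$ and the factors $(t_i^{(a)}-t_j^{(a+1)})^{-1}$ scale by $s^{-1}$. Consequently $\log\Phi(s\,\bs T) = \log\Phi(\bs T) + (\text{const})\log s$, and differentiating this identity shows how the critical point locus and the weight function behave under rescaling.

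First I would establish the homogeneity of each piece. The critical point equations \Ref{BAE Gaud} are homogeneous of degree $-1$ in $\bs T$, so if $(\bs z,\bs p)$ is a critical point of $\log\Phi(\bs z,\cdot)$ then $(s\bs z, s\bs p)$ is a critical point of $\log\Phi(s\bs z,\cdot)$; thus $C_{s\bs z} = s\,C_{\bs z}$ as sets. Next, the universal weight function $\omega(\bs T)=\sum_J\omega_J(\bs T)e_Jv$ is homogeneous of degree $-(l-n)$: each $\omega_{a,\bs\gamma}$ in \Ref{wf} is a product of $\,j_a-1\,$ factors of the form $(t-t')^{-1}$, and summing over $a\in A_1(J)$ the total number of such factors equals $\sum_i l_i = l-n$, so $\omega(s\bs T)=s^{-(l-n)}\omega(\bs T)$. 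The Hessian ${\rm Hess}_{\bs t}\log\Phi$ is a determinant of an $(l-n)\times(l-n)$ matrix of second derivatives, each second derivative being homogeneous of degree $-2$; hence ${\rm Hess}_{\bs t}\log\Phi(s\bs T)=s^{-2(l-n)}{\rm Hess}_{\bs t}\log\Phi(\bs T)$. Finally $F\in\C[\bs\Sigma]$ quasi-homogeneous of degree $d$ means $F(s\bs T)=s^d F(\bs T)$, since $\sigma^{(a)}_i$ has degree $i$ and quasi-homogeneity is measured by this grading.

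I would then combine the scalings in the summand of \Ref{B sum}. The ratio
\be
\frac{F(\bs z,\bs p)\,\omega(\bs z,\bs p)}{{\rm Hess}_{\bs t}\log\Phi(\bs z,\bs p)}
\ee
scales, under $(\bs z,\bs p)\mapsto(s\bs z,s\bs p)$, by the factor $s^{d}\cdot s^{-(l-n)}\cdot s^{2(l-n)}=s^{d+(l-n)}$. Since the sum runs over $C_{\bs z}$ and $C_{s\bs z}=s\,C_{\bs z}$, reindexing the sum gives $v_F(s\bs z)=s^{d+l-n}v_F(\bs z)$, which is precisely the asserted homogeneity with $\deg v_F=d+l-n$. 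The identity $l-n=\sum_{i=1}^N(i-1)\la_i$ follows from $l-n=\sum_{a=1}^{N-1}l_a$ and $l_a=\sum_{b>a}\la_b$, a routine rearrangement of the double sum. Since by Theorem \ref{thm conj} the map $v_F$ is already known to be polynomial, homogeneity of the defining expression upgrades to genuine graded homogeneity of $v_F$ as a polynomial map into $(V^{\otimes n})^{sing}_{\bs\la}$, where the target carries its own grading (the component of degree $\sum_i(i-1)\la_i$ being the minimal one by \Ref{char V}).

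The main subtlety I anticipate is not the scaling bookkeeping itself but correctly matching the grading on the target $(V^{\otimes n})^{sing}_{\bs\la}$ against the scaling weight computed above; one must verify that the vectors $e_Jv$ are assigned degree zero so that the homogeneity of $v_F$ as a vector-valued map reads off directly from the scalar prefactor, and that the base shift by $\sum_i(i-1)\la_i$ recorded in Theorem \ref{first1} is consistent. Because Theorem \ref{thm conj} supplies polynomiality a priori, I do not need to worry about convergence of the sum or about the rational-map singularities; the homogeneity is then a clean consequence of the degree count, and no genuine obstacle remains beyond careful accounting.
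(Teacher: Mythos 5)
Your proof is correct: the paper states Lemma \ref{lem homog} without proof (it is marked with a \qed), and your scaling argument --- $C_{s\bs z}=sC_{\bs z}$ from the homogeneity of \Ref{BAE Gaud}, $\omega$ of degree $-(l-n)$, the Hessian of degree $-2(l-n)$, $F$ of degree $d$, combined with the identity $l-n=\sum_{a\ge1}l_a=\sum_i(i-1)\la_i$ --- is exactly the routine degree count the authors leave to the reader. The one point you flag, that the grading on $\Vl$ assigns degree zero to the vectors $e_Jv$ and lives entirely in the polynomial coefficients, is indeed how the paper sets things up in Section \ref{VS}, so the scalar prefactor $s^{d+l-n}$ does read off directly as $\deg v_F$.
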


It is clear that the map $v_F$ is an element of $\Vl$.
Thus, the correspondence $F \mapsto v_F$
defines a graded linear
map $\mu : \C[\bs\Sigma] \to \Vl$.

\begin{thm}
\label{thm CC to B}
The kernel of $\mu : \C[\bs\Sigma]
\to \Vl$ is the defining ideal
$I_\CC \subset \C[\bs\Sigma]$ of $\CC$. The map $\mu$ induces a
graded linear isomorphism
$\O_\CC \to \Vl$ of degree $\sum_{i=1}^N(i-1)\la_i$.

\end{thm}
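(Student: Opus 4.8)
The plan is to exploit the rich algebraic structure already established, rather than to attack the map $\mu$ head-on. The key observation is that we have two isomorphisms in hand: the composite $\iota^* : \O_\W \to \O_\CC$ from Corollary \ref{cor iso O to T}, and the isomorphism $\eta : \O_\W \to \Vl$ of Theorem \ref{first1}, which intertwines multiplication with the Bethe algebra action via $\eta(FG) = \zeta(F)\eta(G)$. I would try to show that $\mu$ factors compatibly through these, so that the induced map $\O_\CC \to \Vl$ agrees (up to the known isomorphisms) with $\eta \circ (\iota^*)^{-1}$. The first and most important step is therefore to verify the identity
\beq
\label{mu factor}
\mu\ =\ \eta\circ(\iota^*)^{-1}\circ(\text{restriction to }\CC)
\eeq
as maps on $\C[\bs\Sigma]$, or more precisely to compute $v_F$ for $F\in\C[\bs\Sigma]$ and match it against the right-hand side. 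Once this is done, the theorem follows formally: $\ker\mu$ equals the kernel of the restriction map $\C[\bs\Sigma]\to\O_\CC$, which is exactly $I_\CC$, and $\mu$ induces an isomorphism $\O_\CC\to\Vl$ because both $\iota^*$ and $\eta$ are isomorphisms.

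To establish the factorization identity \Ref{mu factor}, I would first verify it on generators and then extend by the module structure. The natural generators of $\C[\bs\Sigma]$ to test against are the coefficients $B^\T_{ij}$ of the operator $\D^\T$, since by Corollary \ref{cor iso O to T} their restrictions to $\CC$ generate $\O_\CC$ and correspond under $(\iota^*)^{-1}$ to the generators $B^\W_{ij}$ of $\O_\W$. The crucial input here is Theorem \ref{thm X to Vn}(ii): at a nondegenerate critical point $(\bs z,\bs p)$, the Bethe vector $\omega(\bs z,\bs p)$ is an eigenvector of $\B$ with eigenvalue differential operator $\D_{\omega(\bs z,\bs p)}=\D_{(\bs z,\bs p)}$. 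This means that the action of the Bethe algebra element $B^\V_{ij}$ on $\omega(\bs z,\bs p)$ multiplies it by the scalar $B^\T_{ij}(\bs z,\bs p)$. Consequently, applying $B^\V_{ij}$ inside the averaging sum \Ref{B sum} has the same effect as multiplying the summand by the function $B^\T_{ij}$, which gives the operator identity $v_{B^\T_{ij}\cdot F}=\zeta(B^\W_{ij})\,v_F$ paralleling $\eta(FG)=\zeta(F)\eta(G)$. This is precisely the compatibility needed to match $\mu$ with $\eta$.

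The technical heart, and the step I expect to be the main obstacle, is proving that $v_F$ is even well-defined as a vector in $\Vl$ and that the base case $\mu(1)=v_1$ identifies correctly with the distinguished generator. Here Theorem \ref{thm conj} is essential: it guarantees that each $v_F$ is a genuine polynomial map rather than merely rational, so that $v_F\in\Vl$ and the graded linear map $\mu$ makes sense. For the base case $F=1$, I would use Theorem \ref{thm X to Vn}(iii), which states $S(\omega(\bs z,\bs p),\omega(\bs z,\bs p))={\rm Hess}_{\bs t}\log\Phi(\bs z,\bs p)$, together with the orthogonality in part (iv); these are exactly what is needed to verify that the Shapovalov-normalized average $v_1$ equals the lowest-degree vector $v_1\in\Vl$ used to define $\eta$, and the degree count in Lemma \ref{lem homog} confirms $\deg v_1=\sum_{i=1}^N(i-1)\la_i$, matching $\eta$. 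Once the base case and the multiplicative compatibility are both in place, the factorization \Ref{mu factor} follows by induction on generators, and the theorem is immediate. The delicate point throughout is ensuring that the eigenvalue scalars $B^\T_{ij}(\bs z,\bs p)$ pulled out of the sum really do assemble into the restriction of the \emph{global} polynomial $B^\T_{ij}|_\CC$, which is guaranteed by Theorem \ref{thm nonsing} identifying $\CC$ with $\theta(\W)$ and by the polynomiality of $v_F$.
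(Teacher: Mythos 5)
Your proposal is correct in substance but takes a genuinely different route from the paper. The paper's proof is a short two-step direct argument: for the kernel, it observes that $F\in I_\CC$ kills every summand of $v_F(\bs z)$, and conversely that $v_F=0$ forces $F$ to vanish on the critical set because the Bethe vectors attached to distinct orbits are linearly independent for generic $\bs z$; for surjectivity, it compares the graded character of $\O_\CC$ (equal to that of $\O_\W$ by Corollary \ref{cor iso O to T}, formula \Ref{char O}) with the graded character \Ref{char V} of $\Vl$ and invokes Lemma \ref{lem homog}, so that an injective graded map between spaces with identical finite-dimensional graded components is an isomorphism. You instead factor $\mu$ as $\eta\circ(\iota^*)^{-1}\circ(\hbox{restriction to }\CC)$, verifying the factorization on the generators $B^\T_{ij}$ via the eigenvector property of Theorem \ref{thm X to Vn}(ii) --- which is exactly the content of the paper's Lemma \ref{lem zeta}, stated only \emph{after} this theorem --- and anchoring the base case $\mu(1)=v_1$ via parts (iii)--(iv) of Theorem \ref{thm X to Vn}. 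Two small points you should make explicit for the argument to close: first, the easy inclusion $I_\CC\subseteq\ker\mu$ must be established \emph{before} the factorization can be stated, since you need $v_F$ to depend only on $F|_\CC$ in order to replace $F$ by a polynomial $P(B^\T_{ij})$ agreeing with it on $\CC$; second, you must check $\mu(1)\neq0$ (which $S(v_1(\bs z),\omega(\bs z,\bs p))=1$ gives you) so that $\mu(1)$ is a legitimate choice of the vector $v_1$ in Theorem \ref{first1}. Granting these, your route trades the paper's character count for the already-proved surjectivity of $\eta$, and it delivers the intertwining statements of Corollaries \ref{cor v_1}, \ref{cor cc b} and \ref{cor cycle} essentially for free; the paper's route is shorter and defers the multiplicative compatibility to a separate lemma. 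Both arguments rest equally on Theorem \ref{thm conj} for the polynomiality of $v_F$.
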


We shall denote this isomorphism by the same letter $\mu$.

\begin{proof} If $F\in I_\CC$, then $v_F =0$ for generic $\bs z$. Hence,
$v_F=0$ as an element of $\Vl$. If $v_F=0$
as an element of $\Vl$, then $F=0$ on a Zariski open subset of
$\CC$. Hence, $F\in I_\CC$. Therefore, ${\rm ker} \,\mu\,=\,I_\CC$.

The graded character of $\O_\CC$ equals the graded character of $\O_\W$
by Corollary \ref{cor iso O to T}. The graded character of $\O_\W$
is given by \Ref{char O}.
The graded character of $\Vl$ is given by
\Ref{char V}. Comparing the characters and using Lemma \ref{lem homog},
we conclude that
the induced map $\mu : \O_\CC \to \Vl$ is an isomorphism.
\end{proof}

\begin{cor}
\label{cor v_1}
Consider the element $v_1\in \Vl$, corresponding to $F=1$ under the isomorphism
$\mu$. Then
$v_1$ is a generator of the one-dimensional graded component of
$\Vl$ of degree $\sum_{i=1}^N(i-1)\la_i$ (compare this $v_1$
with the element $v_1$ in Theorem \ref{first1}).
\qed
\end{cor}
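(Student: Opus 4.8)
The plan is to read off the conclusion directly from Theorem~\ref{thm CC to B} together with the degree bookkeeping of Lemma~\ref{lem homog} and the character formula~\Ref{char V}; no new computation is required.

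First I would record that the constant function $F=1$ is quasi-homogeneous of degree $0$. Applying Lemma~\ref{lem homog} with $d=0$ then shows that $v_1=v_{F=1}$ is a homogeneous element of $\Vl$ of degree $l-n=\sum_{i=1}^N(i-1)\la_i$. (The identity $l-n=\sum_{i=1}^N(i-1)\la_i$ is just the definition $l_a=\sum_{b=a+1}^N\la_b$ summed over $a=1,\dots,N-1$.) Next I would invoke the character formula~\Ref{char V}: the lowest degree occurring in $\Vl$ is $\sum_{i=1}^N(i-1)\la_i$ and the homogeneous component in that degree is one-dimensional, exactly as noted just before Theorem~\ref{first1}. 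Thus $v_1$ lies in this one-dimensional graded piece.

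It remains only to check that $v_1\neq 0$, and here I would simply use that $\mu$ is an isomorphism by Theorem~\ref{thm CC to B}: since $1$ is a nonzero element of $\O_\CC$ and $\mu$ is injective, $v_1=\mu(1)\neq 0$. A nonzero vector in a one-dimensional space spans it, so $v_1$ generates the degree-$\sum_{i=1}^N(i-1)\la_i$ component, as claimed. There is essentially no obstacle here: the only items to verify are the elementary degree identity and the nonvanishing $1\neq 0$ in $\O_\CC$, both of which are immediate. Finally, the parenthetical comparison with the vector $v_1$ of Theorem~\ref{first1} follows automatically, since both are nonzero elements of the same one-dimensional space and hence coincide up to a nonzero scalar.
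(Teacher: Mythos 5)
Your proposal is correct and follows exactly the route the paper intends: the corollary is stated with an immediate \qed because it is a direct consequence of Lemma \ref{lem homog} (degree of $v_1$ equals $l-n=\sum_{i=1}^N(i-1)\la_i$), the character formula \Ref{char V} (that graded component is one-dimensional), and the injectivity of $\mu$ from Theorem \ref{thm CC to B} (so $v_1\neq 0$). Your degree bookkeeping $l-n=\sum_{a=1}^{N-1}l_a=\sum_{i=1}^N(i-1)\la_i$ checks out, so nothing is missing.
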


Given an element $F\in \O_\CC$, define a linear map $\nu(F):\Vl \to \Vl$, $v_G
\mapsto v_{FG}$. By Theorem \ref{thm CC to B},
this map is well-defined.

Consider the generators $B^\T_{ij}|_{\CC}$ of $ \O_\CC$
and generators $B^\V_{ij}$ of $\B_\V$, see
Corollary \ref{cor iso O to T} and Section \ref{sec B^V_i}.

\begin{lem}
\label{lem zeta}
For any $(i,j)$, the linear map $\nu(B^\T_{ij}|_{\CC}) : \Vl \to \Vl$,
$v_F \mapsto v_{B^\T_{ij}F}$, coincides with the map $B^\V_{ij}$.
\end{lem}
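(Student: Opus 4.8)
The plan is to establish the identity $\nu(B^\T_{ij}|_\CC)\,v_F = v_{B^\T_{ij}F}$ by reducing everything to a pointwise statement at a generic $\bs z$, where the Bethe vector formalism of Theorem \ref{thm X to Vn} is available. First I would fix a generic point $\bs z\in\C^n$ with distinct coordinates such that all critical points of $\log\Phi(\,\bs z\,,\cdot)$ are nondegenerate; by definition the averaging maps are
\beq
v_F(\bs z)\,=\,\frac 1{l_1!\dots l_{N-1}!}\sum_{(\bs z,\bs p)\in C_{\bs z}}
\frac{F(\bs z,\bs p)\,\omega(\bs z,\bs p)}{{\rm Hess}_{\bs t}\log\Phi(\bs z,\bs p)}\,,
\eeq
and the target statement is that applying the operator $B^\V_{ij}$ to this vector produces $v_{B^\T_{ij}F}(\bs z)$.

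The key observation is that $B^\V_{ij}$ acts on each summand separately because each Bethe vector $\omega(\bs z,\bs p)$ is an \emph{eigenvector} of the Bethe algebra $\B$ acting on $\otimes_{s=1}^n V(z_s)$. By Theorem \ref{thm X to Vn}(ii), the associated scalar differential operator satisfies $\D_{\omega(\bs z,\bs p)}=\D_{(\bs z,\bs p)}$, which means the eigenvalue of $B^\V_{ij}$ on $\omega(\bs z,\bs p)$ is precisely the coefficient $B^\T_{ij}$ of the operator $\D^\T$ evaluated at the point $(\bs z,\bs p)$, i.e.\ the scalar $B^\T_{ij}(\bs z,\bs p)$. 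Thus I would compute
\beq
B^\V_{ij}\,\omega(\bs z,\bs p)\,=\,B^\T_{ij}(\bs z,\bs p)\,\omega(\bs z,\bs p)\,,
\eeq
and, since the scalar factor $F(\bs z,\bs p)$ and the Hessian are unaffected by $B^\V_{ij}$, multiplying through and summing over $C_{\bs z}$ gives $B^\V_{ij}\,v_F(\bs z)=v_{B^\T_{ij}F}(\bs z)$ at this generic $\bs z$.

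Finally I would promote this pointwise identity to an identity of maps in $\Vl$. Both $B^\V_{ij}\circ v_F$ and $v_{B^\T_{ij}F}$ are polynomial maps $\C^n\to(V^{\otimes n})^{sing}_{\bs\la}$ by Theorem \ref{thm conj} (the former because $v_F$ is polynomial and $B^\V_{ij}$ is a fixed linear operator; the latter because $B^\T_{ij}F\in\C[\bs\Sigma]$), and they agree on a Zariski-dense subset of $\C^n$, hence coincide. Under the identification of elements of $\Vl$ with their defining polynomial maps, this says exactly that $\nu(B^\T_{ij}|_\CC)=B^\V_{ij}$ as operators on $\Vl$. The main obstacle is the bookkeeping at the single-orbit level: one must check that the eigenvalue identity $B^\V_{ij}\omega=B^\T_{ij}\omega$ is compatible with the $S_{l_1}\times\dots\times S_{l_{N-1}}$-averaging, i.e.\ that the coefficient $B^\T_{ij}$, being $S_{\bs l}$-invariant and hence constant on each orbit, pulls out of the orbit-sum cleanly so that the scalar weight $B^\T_{ij}(\bs z,\bs p)$ matches the factor $F(\bs z,\bs p)$ in the definition of $v_{B^\T_{ij}F}$; this is where the eigenvector property from Theorem \ref{thm X to Vn}(ii) does the essential work.
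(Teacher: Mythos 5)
Your proposal is correct and is essentially the paper's own argument: the paper's proof consists of the single line that the lemma ``follows from part (ii) of Theorem \ref{thm X to Vn},'' and the content of that citation is exactly your eigenvalue computation $B^\V_{ij}\,\omega(\bs z,\bs p)=B^\T_{ij}(\bs z,\bs p)\,\omega(\bs z,\bs p)$ applied termwise to the sum defining $v_F(\bs z)$ at generic $\bs z$, followed by the extension by Zariski density. You have merely made explicit the details the authors left implicit.
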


\begin{proof}
The lemma follows from part (ii) of Theorem \ref{thm X to Vn}.
\end{proof}

\begin{cor}
\label{cor nu-iso}
The map $F\mapsto \nu(F)$ is an algebra isomorphism
$\nu : \O_\CC \to \B_\V$.
\qed
\end{cor}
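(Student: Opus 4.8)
The plan is to verify that $\nu$ is a unital algebra homomorphism into $\End(\Vl)$ and then to identify its image and its kernel using the results already assembled, after which ``isomorphism onto $\B_\V$'' follows formally. First I would check multiplicativity directly from the definition. Since $\nu(F)$ acts by $v_G \mapsto v_{FG}$, the composite $\nu(F)\nu(G)$ sends $v_H$ to $\nu(F)(v_{GH}) = v_{FGH}$, which is exactly $\nu(FG)(v_H)$; moreover $\nu(1)$ acts by $v_G\mapsto v_G$, so $\nu(1)=\mathrm{id}_{\Vl}$. As $\nu$ is visibly linear, this shows $\nu:\O_\CC\to\End(\Vl)$ is a unital algebra homomorphism. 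Every step here is purely formal and uses only the well-definedness of $\nu(F)$ recorded just before the statement.

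Next I would pin down the image. By Lemma \ref{lem zeta} we have $\nu(B^\T_{ij}|_{\CC}) = B^\V_{ij}$, and by Corollary \ref{cor iso O to T} the elements $B^\T_{ij}|_{\CC}$ generate $\O_\CC$. Since $\nu$ is an algebra homomorphism, its image $\nu(\O_\CC)$ is therefore the subalgebra of $\End(\Vl)$ generated by the operators $B^\V_{ij}$. But that subalgebra is by definition the Bethe algebra $\B_\V$ (Section \ref{sec B^V_i}). Hence $\nu(\O_\CC)=\B_\V$, so $\nu$ is surjective onto $\B_\V$.

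For injectivity the key observation is that $\nu$ refines the linear isomorphism $\mu$ of Theorem \ref{thm CC to B}: evaluating $\nu(F)$ on the distinguished generator $v_1=\mu(1)$ of Corollary \ref{cor v_1} gives $\nu(F)(v_1)=v_{F}=\mu(F)$. Thus $\nu(F)=0$ forces $\mu(F)=0$, and since $\mu$ is an isomorphism we get $F=0$; hence $\ker\nu=0$. Combined with surjectivity, $\nu:\O_\CC\to\B_\V$ is an algebra isomorphism. I do not expect a serious obstacle: this corollary is essentially bookkeeping that packages Theorem \ref{thm CC to B} and Lemma \ref{lem zeta}, where the real work was done. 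The one point requiring genuine care is the injectivity step, namely noticing that $\nu(F)$ applied to the single vector $v_1$ already recovers $\mu(F)$, so that the vector-space isomorphism $\mu$ immediately trivializes the kernel of the algebra map $\nu$. (One could alternatively argue by comparing graded characters, since $\nu$ is degree-preserving, but the evaluation-at-$v_1$ argument is cleaner and avoids re-deriving the characters.)
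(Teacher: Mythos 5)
Your argument is correct and is exactly the bookkeeping the paper intends by marking this corollary with a \qed after Theorem \ref{thm CC to B}, Lemma \ref{lem zeta}, and Corollary \ref{cor iso O to T}: multiplicativity is formal, surjectivity follows since the generators $B^\T_{ij}|_{\CC}$ map to the generators $B^\V_{ij}$ of $\B_\V$, and injectivity follows by evaluating on $v_1=\mu(1)$ and invoking the isomorphism $\mu$. No discrepancy with the paper's (implicit) proof.
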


\begin{cor}
\label{cor cc b}
The maps\/
$\mu : \O_\CC\to \Vl$
and\/
$\nu : \O_\CC \to \B_\V$
intertwine the action of the multiplication
operators on $\O_{\CC}$ and the action of the Bethe algebra $\B_{\V}$
on $(\V^S)^{sing}_{\bs\la}$, that is, for any $F,G\in\O_{\CC}$, we have
\beq
\mu(FG)\,=\,\nu(F)\,\mu(G)\,.
\eeq
\qed
\end{cor}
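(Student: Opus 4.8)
The plan is to observe that the stated identity is a formal consequence of the definitions of $\mu$ and $\nu$ together with Theorem \ref{thm CC to B}, so that no fresh computation is required. First I would recall that, under the isomorphism $\mu:\O_\CC\to\Vl$ of Theorem \ref{thm CC to B}, a class $G\in\O_\CC$ is carried to the Bethe vector averaging map $\mu(G)=v_G$, where $G$ is represented by any element of $\C[\bs\Sigma]$; this value is independent of the chosen representative precisely because $\ker\mu=I_\CC$.

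Next I would unwind the definition of $\nu(F)$. By construction (the paragraph preceding Lemma \ref{lem zeta}), $\nu(F)$ is the linear endomorphism of $\Vl$ determined by $v_G\mapsto v_{FG}$, and this assignment is well defined again by Theorem \ref{thm CC to B}. Applying $\nu(F)$ to $\mu(G)=v_G$ therefore yields $\nu(F)\mu(G)=v_{FG}$. On the other hand, $\mu(FG)=v_{FG}$ directly from the definition of $\mu$ applied to the product class $FG\in\O_\CC$. Comparing the two expressions gives $\mu(FG)=\nu(F)\mu(G)$, as required.

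I would stress that there is no genuine obstacle remaining: all the substantive input has already been established, namely the polynomiality of the averaging maps (Theorem \ref{thm conj}), the identification $\ker\mu=I_\CC$ and the induced isomorphism $\mu:\O_\CC\to\Vl$ (Theorem \ref{thm CC to B}), the fact that $\nu$ realizes each generator $B^\T_{ij}|_\CC$ as the operator $B^\V_{ij}$ (Lemma \ref{lem zeta}), and consequently that $\nu:\O_\CC\to\B_\V$ is an algebra isomorphism onto the Bethe algebra $\B_\V$ (Corollary \ref{cor nu-iso}). The single point to keep in view while writing the argument is well-definedness under these identifications, which is exactly what Theorem \ref{thm CC to B} supplies; granted this, the intertwining relation $\mu(FG)=\nu(F)\mu(G)$ follows immediately from chasing the definitions.
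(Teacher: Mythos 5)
Your argument is correct and is exactly the reason the paper states this corollary with an immediate \qed: by construction $\mu(G)=v_G$ and $\nu(F)\colon v_G\mapsto v_{FG}$, so $\nu(F)\mu(G)=v_{FG}=\mu(FG)$, with well-definedness supplied by Theorem \ref{thm CC to B}. No further comment is needed.
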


\begin{cor}
\label{cor cycle}
Consider the element $v_1\in \Vl$, corresponding to $F=1$ under the isomorphism
$\mu$. Let us use this element in the definition of the isomorphism $\eta$ of Theorem
\ref{first1}.
Then the throughout compositions
\bea
\O_\CC \xrightarrow{\mu} \Vl \xrightarrow{\eta^{-1}} \O_\W \xrightarrow{\iota^*}
\O_\CC\ ,
\qquad
\O_\CC \xrightarrow{\nu} \B_\V \xrightarrow{\zeta^{-1}} \O_\W \xrightarrow{\iota^*}
\O_\CC
\eea
are the identity maps.
\qed
\end{cor}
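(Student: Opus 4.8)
The plan is to reduce both throughout compositions to one and the same computation on the generators of $\O_\CC$, using the intertwining relations already available. First I would rewrite the two linear isomorphisms $\mu$ and $\eta$ in terms of the algebra isomorphisms $\nu$ and $\zeta$. Setting $G=1$ in the relation $\mu(FG)=\nu(F)\mu(G)$ of Corollary \ref{cor cc b} gives $\mu(F)=\nu(F)\,v_1$ for all $F\in\O_\CC$, where $v_1=\mu(1)$; setting $G=1$ in the relation $\eta(FG)=\zeta(F)\eta(G)$ of Theorem \ref{first1} gives $\eta(F)=\zeta(F)\,\eta(1)$ for all $F\in\O_\W$. Since $\eta$ is homogeneous of degree $\sum_{i=1}^N(i-1)\la_i$, the vector $\eta(1)$ lies in the one-dimensional graded component of $\Vl$ in that degree, which is $\C v_1$; comparing $\eta(B^\W_{ij})=\zeta(B^\W_{ij})\,\eta(1)=B^\V_{ij}\,\eta(1)$ with the normalization $\eta(B^\W_{ij})=B^\V_{ij}v_1$ and using injectivity of $\eta$ forces $\eta(1)=v_1$. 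It is here that one uses the instruction of the corollary to take $v_1=\mu(1)$ as the very vector normalizing $\eta$, so that both factorizations $\mu(F)=\nu(F)\,v_1$ and $\eta(F)=\zeta(F)\,v_1$ share the same $v_1$.

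With these factorizations the key observation is that $\eta^{-1}\circ\mu=\zeta^{-1}\circ\nu$. Indeed, for $F\in\O_\CC$ put $G=\zeta^{-1}(\nu(F))\in\O_\W$; then $\eta(G)=\zeta(G)\,v_1=\nu(F)\,v_1=\mu(F)$, and since $\eta$ is bijective this yields $\eta^{-1}(\mu(F))=\zeta^{-1}(\nu(F))$. Hence the two throughout compositions of the corollary coincide, and it suffices to show that the single map $\iota^*\circ\zeta^{-1}\circ\nu:\O_\CC\to\O_\CC$ is the identity.

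This map is a composition of the algebra isomorphisms $\nu$ (Corollary \ref{cor nu-iso}), $\zeta^{-1}$ (Theorem \ref{first}) and $\iota^*$ (Corollary \ref{cor iso O to T}), hence is itself an algebra endomorphism of $\O_\CC$, and so it is enough to check it on the generators $B^\T_{ij}|_\CC$. By Lemma \ref{lem zeta} one has $\nu(B^\T_{ij}|_\CC)=B^\V_{ij}$; by Theorem \ref{first}, $\zeta^{-1}(B^\V_{ij})=B^\W_{ij}$; and by Corollary \ref{cor iso O to T}, $\iota^*(B^\W_{ij})=B^\T_{ij}|_\CC$. Thus the composition fixes every generator, hence equals $\mathrm{id}_{\O_\CC}$, and by the previous paragraph so does $\iota^*\circ\eta^{-1}\circ\mu$.

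The hard part will not be any analysis or combinatorics—all the substantive content is already carried by Theorem \ref{thm CC to B}, Lemma \ref{lem zeta} and Corollary \ref{cor iso O to T}—but rather the careful bookkeeping around the two a priori distinct vectors denoted $v_1$. The whole argument hinges on the identification $\eta(1)=\mu(1)$, which lets the factorizations of $\mu$ and $\eta$ be compared directly; once that is in place, the collapse $\eta^{-1}\circ\mu=\zeta^{-1}\circ\nu$ and the generator-wise computation finish the proof with no further obstruction.
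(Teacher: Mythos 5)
Your proof is correct and is exactly the argument the paper intends: the corollary is stated with no written proof precisely because it reduces, as you show, to the factorizations $\mu(F)=\nu(F)\,v_1$ and $\eta(F)=\zeta(F)\,v_1$ (with the shared normalization $\eta(1)=\mu(1)=v_1$) together with the generator computation $B^\T_{ij}|_\CC\mapsto B^\V_{ij}\mapsto B^\W_{ij}\mapsto B^\T_{ij}|_\CC$ from Lemma \ref{lem zeta}, Theorem \ref{first}, and Corollary \ref{cor iso O to T}. Your extra care in identifying the two vectors called $v_1$ is exactly the point the corollary's phrasing is flagging, and your reduction $\eta^{-1}\circ\mu=\zeta^{-1}\circ\nu$ correctly collapses the two compositions into one.
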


\subsection{Inverse map to $\nu :\O_\CC \to \Vl$}

For $v\in \Vl$, define a function $f_v$
on a Zariski open subset of $\CC$ as follows.
For a generic point $\bs \Sigma\in \CC$, let $\bs T=(\bs z,\bs t)$
be a point of the critical set
$C \subset \C^l$ which projects to $\bs\Sigma$. Let $\omega(\bs z,\bs t)$ be the Bethe
vector corresponding the point $(\bs z,\bs t)$. Set
\bea
f_v(\bs\Sigma)\ = \ S(v(\bs z),\omega(\bs z,\bs t))\ ,
\eea
where $S$ is the tensor Shapovalov form on $V^{\otimes n}$,
cf.~Theorem~\ref{thm X to Vn}.

\begin{thm}
\label{thm inv O_CC}
For any $v\in\Vl$, the scalar
function $f_v$ is the restriction to $\CC$ of a polynomial.
Moreover,
the map $\Vl \to \O_\CC$, $v\mapsto f_v$, is the inverse map to the isomorphism
$\nu :\O_\CC \to \Vl$.
\end{thm}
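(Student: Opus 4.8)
The plan is to reduce everything to the single identity $f_{v_F}=F|_\CC$ for $F\in\O_\CC$, and to read off both assertions of the theorem from it. Recall from Theorem \ref{thm CC to B} that $\mu:\O_\CC\to\Vl$, $F\mapsto v_F$, is a linear isomorphism; hence every $v\in\Vl$ is of the form $v=v_F$ for a unique $F\in\O_\CC$. If I can show that for such $F$ one has $S(v_F(\bs z),\omega(\bs z,\bs t))=F(\bs\Sigma)$ for every point $\bs T=(\bs z,\bs t)\in C$ lifting a generic $\bs\Sigma\in\CC$, then simultaneously $f_{v_F}$ is well defined (independent of the chosen lift), equals the restriction to $\CC$ of the polynomial $F$, and the map $v\mapsto f_v$ is nothing but $\mu^{-1}$.

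First I would fix a generic $\bs\Sigma\in\CC$ and a lift $\bs T=(\bs z,\bs t)\in C$, which by Theorem \ref{thm BA completness} may be taken to be a nondegenerate critical point of $\log\Phi(\,\bs z\,,\cdot)$ with distinct coordinates $z_1,\dots,z_n$. Substituting the defining sum \Ref{B sum} for $v_F(\bs z)$ and using bilinearity of $S$,
\bea
f_{v_F}(\bs\Sigma)\,=\,\frac1{l_1!\dots l_{N-1}!}
\sum_{(\bs z,\bs p)\in C_{\bs z}}
\frac{F(\bs z,\bs p)}{{\rm Hess}_{\bs t}\log\Phi(\bs z,\bs p)}\,
S\bigl(\omega(\bs z,\bs p),\omega(\bs z,\bs t)\bigr)\,.
\eea
Now part (iv) of Theorem \ref{thm X to Vn} makes every term vanish except those for which $(\bs z,\bs p)$ lies in the same $S_{l_1}\times\dots\times S_{l_{N-1}}$-orbit as $(\bs z,\bs t)$; generically this orbit has exactly $l_1!\dots l_{N-1}!$ points. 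On that orbit the $S_{l_1}\times\dots\times S_{l_{N-1}}$-invariance of $\omega$, of $F$ and of the Hessian gives $\omega(\bs z,\bs p)=\omega(\bs z,\bs t)$, $F(\bs z,\bs p)=F(\bs z,\bs t)$ and ${\rm Hess}_{\bs t}\log\Phi(\bs z,\bs p)={\rm Hess}_{\bs t}\log\Phi(\bs z,\bs t)$, while part (iii) of Theorem \ref{thm X to Vn} identifies $S(\omega(\bs z,\bs t),\omega(\bs z,\bs t))$ with ${\rm Hess}_{\bs t}\log\Phi(\bs z,\bs t)$. Hence each surviving term equals $F(\bs z,\bs t)$, the Hessians cancel, and summing the $l_1!\dots l_{N-1}!$ of them against the prefactor yields $f_{v_F}(\bs\Sigma)=F(\bs z,\bs t)=F(\bs\Sigma)$, the last equality because $F\in\C[\bs\Sigma]=\C[\bs T]^{S_{\bs l}}$ depends only on $\bs\Sigma$.

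With this identity in hand the theorem follows formally. Since the computed value $F(\bs\Sigma)$ does not involve the choice of lift $(\bs z,\bs t)$, the scalar $f_v$ is well defined on a Zariski open subset of $\CC$ and coincides there with the regular function $F|_\CC$; being the restriction of a polynomial it is an element of $\O_\CC$, which is the first assertion. Writing $g:\Vl\to\O_\CC$ for $v\mapsto f_v$, the identity reads $g\circ\mu={\rm id}_{\O_\CC}$, and as $\mu$ is bijective this forces $g=\mu^{-1}$, the second assertion. I expect the only genuinely delicate point to be the a priori well-definedness of $f_v$ under the residual $S_{l_0}=S_n$ symmetry permuting $\bs z$: a direct check would require the $S_n$-equivariance of both $v\in\V^S$ and of the Bethe vector $\omega$ together with the $S_n$-invariance of the tensor Shapovalov form. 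The argument above circumvents this by computing $f_{v_F}$ for an arbitrary lift and obtaining the lift-independent answer $F(\bs\Sigma)$ directly, so that well-definedness becomes a byproduct rather than a prerequisite.
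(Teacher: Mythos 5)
Your proposal is correct and follows essentially the same route as the paper: the paper's proof likewise writes $v=v_F$, computes $S(v_F(\bs z),\omega(\bs z,\bs t))=F(\bs z,\bs t)$ via the orthogonality and norm formulas of Theorem \ref{thm X to Vn}, and concludes from this single identity. Your write-up simply makes explicit the orbit count, the $S_{l_1}\times\dots\times S_{l_{N-1}}$-invariance, and the lift-independence that the paper leaves implicit.
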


\begin{proof}
Any element of $\Vl$ has the form of $v_F$ for a suitable
$F\in \C[\bs\Sigma]$, see \Ref{B sum}.
In that case,
\bea
f_{v_F}(\bs z, \bs t)\ = \ S\left( \frac 1{l_1!\dots l_{N-1}!}
\sum_{(\bs z,\bs p )\in C_{\bs z}} \frac {F(\bs z,\bs p)\,\omega(\bs z,\bs p)}
{{\rm Hess}_{\bs t}\log \Phi (\bs z,\bs p)}, \omega(\bs z,\bs t))\right)\ = \
F(\bs z,\bs t)\
\eea
by Theorem \ref{thm X to Vn}. This identity proves the theorem.
\end{proof}

\section{Proof of Theorem \ref{thm conj}}
\label{proofs}

\subsection{ The Shapovalov form and asymptotics of $v_F$}
Let $\bs T^0$ be a point of the critical set $C\subset \C^l$, see
Section \ref{Master function}.

Consider the germ at $0\in\C$ of a generic analytic curve $\C\to\C^l$,
$s\mapsto\bs T(s)=(\bs z(s),\bs t(s))$, with $\bs T(0)=\bs T^0$ such that
for any small nonzero $s$, the point $(\bs z(s),\bs t(s))$ is a nondegenerate
critical point of $\log\Phi(\bs z(s),\,\cdot\,)$, and $\bs z(s)$ has distinct
coordinates.
The corresponding Bethe vector has the form,
$\omega(\bs T(s)) = w_\al s^\alpha + o(s^\alpha)$, where
$\alpha$ is a rational number and
$w_\al \in (V^{\otimes n})^{sing}_{\bs\la}$ is a nonzero vector.

Let $X^0$ denote the point of $\W$ corresponding to $\bs T^0$.
Namely, we take the image $\bs \Sig^0$ of $\bs T^0$ in $\CC$
under the factorization by the $S_{\bs l}$-action
and then
set $X^ 0= \iota(\bs \Sig^0)$.

\begin{lem}
\label{lem Shap and Bethe}
Assume that $X^0$ is not a critical point of the Wronski map $\W \to
\C^n$. Then $S(w_\al,w_\al)$ is a nonzero number, where $S$
is the tensor Shapovalov form.
\end{lem}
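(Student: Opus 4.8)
The plan is to realize $w_\al$ as an eigenvector of the Bethe algebra and to show it is non-isotropic for the Shapovalov form by a spectral argument, with the non-criticality of the Wronski map entering precisely to force the relevant eigenvalue to be simple. First I would pass to the limit in Theorem~\ref{thm X to Vn}. For small $s\neq 0$ the vector $\omega(\bs T(s))$ lies in $(V^{\otimes n})^{sing}_{\bs\la}$ and is an eigenvector of $\B$ on $\otimes_{i=1}^n V(z_i(s))$ with associated operator $\D_{\bs T(s)}$, by parts (i)--(ii). Writing $\bs z^0=\bs z(0)$ and dividing by $s^\al$, the leading term $w_\al\in(V^{\otimes n})^{sing}_{\bs\la}$ is a nonzero singular vector that remains an eigenvector of $\B$ on $V^{\otimes n}$ at $\bs z^0$, now with operator $\lim_{s\to0}\D_{\bs T(s)}=\D_{\bs T^0}=\D_{X^0}$; here I use that $X^0=\iota(\bs\Sig^0)$ is the kernel of $\D_{\bs T^0}$ and that $W(X^0)=\bs z^0$, so $X^0$ lies in the fibre of the Wronski map $W:\W\to\C^n$ over $\bs z^0$. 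Note that Theorem~\ref{thm X to Vn}(iii) gives the complementary identity $S(\omega(\bs T(s)),\omega(\bs T(s)))={\rm Hess}_{\bs t}\log\Phi(\bs T(s))$, so the assertion $S(w_\al,w_\al)\neq0$ is equivalent to the Hessian vanishing to order exactly $2\al$ along the curve. Rather than estimate that order directly, I would prove non-isotropy of $w_\al$, which then pins the order down.

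For the non-isotropy I would run an orthogonality argument. The tensor Shapovalov form $S$ is nondegenerate on $(V^{\otimes n})^{sing}_{\bs\la}$, and $\B$ is self-adjoint for $S$; this is exactly the mechanism behind part (iv) of Theorem~\ref{thm X to Vn}, that eigenvectors attached to distinct differential operators are $S$-orthogonal. Decomposing the finite-dimensional space $(V^{\otimes n})^{sing}_{\bs\la}$ into generalized eigenspaces for the commutative algebra $\B$, self-adjointness forces the generalized eigenspaces attached to distinct characters to be mutually $S$-orthogonal. Hence it suffices to show that the character $\chi_0$ determined by $\D_{X^0}$ has a one-dimensional generalized eigenspace: that space is then the genuine eigenline $\C w_\al$, it is $S$-orthogonal to the sum of all other generalized eigenspaces, and nondegeneracy of $S$ on the whole space forces $S$ to be nondegenerate on $\C w_\al$, i.e.\ $S(w_\al,w_\al)\neq0$.

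It remains to establish that the $\chi_0$-generalized eigenspace is one-dimensional, and this is where the hypothesis on $X^0$ enters and is the main obstacle. By \cite{MTV3} the isomorphism $\zeta:\O_\W\to\B_\V$ identifies the action of $\B$ on the fibre $(V^{\otimes n})^{sing}_{\bs\la}$ over $\bs z^0$ with the scheme-theoretic fibre $W^{-1}(\bs z^0)$, the character attached to a point $X$ being $\D_X$, and the dimension of its generalized eigenspace equalling the local multiplicity of the fibre at $X$. Since $X^0$ is not a critical point of $W$, the map $W$ is \'etale at $X^0$; as $W$ is a finite map between the smooth $n$-dimensional varieties $\W\cong\CC$ (Theorem~\ref{thm nonsing}) and $\C^n$, the point $X^0$ is a reduced point of its fibre of multiplicity one, so the $\chi_0$-generalized eigenspace is one-dimensional, completing the argument. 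The delicate point, which I would handle by invoking the freeness of $(\V^S)^{sing}_{\bs\la}$ over $\C[\bs z]^S$ and the module-structure results of \cite{MTV3}, is precisely this translation of ``$X^0$ is not a critical point of the Wronski map'' into ``$X^0$ is a multiplicity-one point of the Wronski fibre, hence a one-dimensional $\B$-eigenspace.''
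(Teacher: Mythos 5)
Your argument is correct, but it takes a genuinely different route from the paper's. The paper argues by a limiting procedure on the orthogonal basis of Bethe vectors: for small $s\neq 0$ the Bethe vectors of the various critical orbits form an $S$-orthogonal basis of $(V^{\otimes n})^{sing}_{\bs\la}$ (by \cite{MV2} and Theorem \ref{thm X to Vn}(iv)), $S$ is nondegenerate, and the non-criticality of the Wronski map at $X^0$ guarantees that the limiting direction of $\omega(\bs T(s))$ is separated from the limiting directions of the other basis vectors, which forces $S(w_\al,w_\al)\neq 0$. You instead pass to the limit of the eigenvalue equations to realize $w_\al$ as a genuine $\B$-eigenvector at $\bs z^0$ with character $\D_{X^0}$, decompose the fiber into generalized $\B$-eigenspaces (mutually $S$-orthogonal by self-adjointness of $\B$), and use the identification from \cite{MTV3} of the fiber with the regular representation of the scheme-theoretic Wronski fibre to convert ``$X^0$ not critical for $W$'' into ``the $\chi_0$-generalized eigenspace is the line $\C w_\al$,'' whence nondegeneracy of $S$ restricted to that line. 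Both proofs use the same geometric input (non-criticality keeps the point of the Wronski fibre isolated and reduced), but yours trades the paper's soft limiting argument for heavier algebraic machinery; what it buys is a cleaner logical structure that avoids the somewhat delicate claim that distinct limiting directions of an orthogonal family suffice, at the cost of needing two facts the paper does not state explicitly: that $\B$ acts by $S$-symmetric operators (true, and implicit in Theorem \ref{thm X to Vn}(iv)), and the multiplicity statement identifying generalized eigenspace dimensions with local multiplicities of the Wronski fibre, which you correctly source to \cite{MTV3}. Neither is a gap, but both should be cited precisely if this version of the proof were used.
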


\begin{proof} For a small nonzero $s$, the Bethe
vectors corresponding to
$S_{l_1}\times \dots\times S_{l_{N-1}}$-orbits of the
critical points of
$\log \Phi (\bs z(s),\,\cdot\,)$
form a basis of
$(V^{\otimes n})^{sing}_{\bs\la}$,
\ see \cite {MV2}.
That basis is orthogonal with respect to the Shapovalov form.
The Shapovalov form is nondegenerate on $(V^{\otimes n})^{sing}_{\bs\la}$.
By assumptions of the lemma, the limit of the direction of the Bethe vector
$\omega(\bs z(s), \bs t(s))$
as $s\to 0$ is different from the limits of the directions of the
other Bethe vectors of the basis.
These remarks imply the lemma.
\end{proof}

\begin{cor}
\label{cor al}
If $\alpha\leq 0$, then the ratio
$\omega(\bs T(s))/
{\rm Hess}_{\bs t}\log \Phi (\bs T(s))$
has well-defined limit as $s\to 0$.
\end{cor}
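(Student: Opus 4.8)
Corollary \ref{cor al} asserts that when the leading exponent $\alpha$ of the Bethe vector $\omega(\bs T(s))$ along the generic curve is nonpositive, the ratio $\omega(\bs T(s))/{\rm Hess}_{\bs t}\log\Phi(\bs T(s))$ has a well-defined limit as $s\to 0$. The plan is to combine the asymptotic expansion $\omega(\bs T(s)) = w_\al s^\alpha + o(s^\alpha)$ with the behavior of the Hessian, extracting the latter from part (iii) of Theorem \ref{thm X to Vn}, which identifies ${\rm Hess}_{\bs t}\log\Phi(\bs z,\bs p)$ with the Shapovalov square $S(\omega(\bs z,\bs p),\omega(\bs z,\bs p))$ along the critical set. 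Since along the curve $\bs T(s)$ is a nondegenerate critical point for small nonzero $s$, that identity gives
\[
{\rm Hess}_{\bs t}\log\Phi(\bs T(s)) = S\bigl(\omega(\bs T(s)),\omega(\bs T(s))\bigr).
\]

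First I would substitute the expansion of $\omega$ into this formula to read off the asymptotics of the Hessian. Bilinearity of $S$ gives
\[
S\bigl(\omega(\bs T(s)),\omega(\bs T(s))\bigr) = S(w_\al,w_\al)\,s^{2\alpha} + o(s^{2\alpha}).
\]
By Lemma \ref{lem Shap and Bethe}, under the standing hypothesis that $X^0$ is not a critical point of the Wronski map, the coefficient $S(w_\al,w_\al)$ is a nonzero number, so the Hessian has leading term exactly of order $s^{2\alpha}$ with nonvanishing coefficient. Dividing the two expansions, the ratio behaves like
\[
\frac{\omega(\bs T(s))}{{\rm Hess}_{\bs t}\log\Phi(\bs T(s))} = \frac{w_\al\,s^\alpha + o(s^\alpha)}{S(w_\al,w_\al)\,s^{2\alpha} + o(s^{2\alpha})} = \frac{w_\al}{S(w_\al,w_\al)}\,s^{-\alpha} + o(s^{-\alpha}).
\]
Since $\alpha\leq 0$ implies $-\alpha\geq 0$, the exponent $s^{-\alpha}$ either tends to $0$ (when $\alpha<0$) or stays constant equal to $1$ (when $\alpha=0$), so the ratio has a finite limit as $s\to 0$; in the boundary case $\alpha=0$ the limit is precisely $w_0/S(w_0,w_0)$.

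The only subtlety I anticipate is bookkeeping with the error terms: one must verify that the $o(s^\alpha)$ correction in the numerator and the $o(s^{2\alpha})$ correction in the denominator do not conspire to produce a divergent contribution after division. This is routine once the leading coefficients are pinned down—because the denominator's leading coefficient $S(w_\al,w_\al)$ is genuinely nonzero (this is where Lemma \ref{lem Shap and Bethe} does the essential work), the quotient is a ratio of power series in $s$ whose denominator has a nonvanishing leading term, hence is itself a convergent expansion in (fractional) powers of $s$ with the indicated leading order $s^{-\alpha}$. Thus the main obstacle is not the estimate itself but ensuring the nondegeneracy hypothesis of Lemma \ref{lem Shap and Bethe} is in force, which is exactly the assumption carried over that $X^0$ avoids the critical locus of the Wronski map.
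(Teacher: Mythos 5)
Your proof is correct and follows essentially the same route as the paper's: identify the Hessian with $S(\omega(\bs T(s)),\omega(\bs T(s)))$ via Theorem \ref{thm X to Vn}(iii), expand bilinearly to get leading order $s^{2\alpha}$ with nonzero coefficient $S(w_\al,w_\al)$ by Lemma \ref{lem Shap and Bethe}, and conclude the ratio has order $s^{-\alpha}$. (Your exponent $s^{2\alpha}$ for the Hessian is the right one; the paper's displayed $s^{-2\al}$ is a sign typo, as its own conclusion that the ratio has order $s^{-\alpha}$ confirms.)
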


\begin{proof}

We have
\be
{\rm Hess}_{\bs t}\log\Phi(\bs T(s))\ =\
S(\omega(\bs T(s)),\omega(\bs T(s)))\ =\
s^{-2\al} S(\omega_\al,\omega_\al) + o(s^{-2\al}) ,
\ee
so the ratio $\omega(\bs T(s))/{\rm Hess}_{\bs t}\log\Phi(\bs T(s))$
has order $s^{-\alpha}$ as $s\to 0$.
\end{proof}

\subsection{Possible places of irregularity of $v_F$}
\label{Possible places of irregularity}
To prove Theorem \ref{thm conj}, we need to show that $v_F$ is
regular outside of at most a codimension-two algebraic subset of
$\C^n$. There are three possible codimension-one irregularity places
of $v_F$:
\plainlabel
\refstepcounter{equation}
\begin{enumerate}
\label{i}
\item[\Ref{i}]
A pole of $v_F$ may occur at a place where $\bs z$ has equal coordinates.
\refstepcounter{equation}
\label{ii}
\item[\Ref{ii}]
A pole of $v_F$ may occur at a place where $\bs z$ has distinct coordinates and
the function \,$\log\Phi(\bs z,\,\cdot\,)$ \,has a degenerate critical point.
\refstepcounter{equation}
\label{iii}
\item[\Ref{iii}]
A pole of $v_F$ may occur at a place where $\bs z$ has distinct coordinates
and there is a critical point which moved to a position
with $t^{(1)}_i=z_j$ for some pair $(i,j)$, or to a position with
$t^{(a)}_i=t^{(a)}_j$ for some triple $(a,i,j)$, \,$a>0$, \,$i\ne j$,
or to a position with $t^{(a)}_i=t^{(a+1)}_j$ for some triple $(a,i,j)$,
\,$a>0$\,.
\end{enumerate}

Problem \Ref{i} is treated in \cite{MV3}. By Lemmas 4.3 and 4.4 of \cite
{MV3}, the map $v_F$ is regular at generic points of the hyperplanes
$z_i=z_j$. (In fact, it is shown in Lemmas 4.3 and 4.4 of \cite{MV3}, that the number
$\alpha$ of Corollary \ref{cor al} is negative at generic points of possible
irregularity corresponding to such hyperplanes, see \cite{MV3}.)

Problem \Ref{ii} of possible irregularity of $v_F$ at
the places, where
$\log \Phi (\bs z,\,\cdot\,)$
has a degenerate critical point, is treated in a standard
way using integral representation \Ref{integral}. One replaces the sum in
\Ref{B sum} by an integral over a cycle which can serve all $\bs z$ that are
close to a given one, and then observes that
the integral is holomorphic in $\bs z$; see, for example,
Sections~5.13, 5.17, 5.18 in~\cite{AGV}.

Thus, to prove Theorem \ref{thm conj} we need to show that generic points of
type \Ref{iii} correspond to the points of $\W$ which are noncritical for
the Wronski map and which have $\al\leq 0$.

\subsection{Flag exponents}

A point $X\in\W$ is an $N$-dimensional space of polynomials with a basis
$g_1(u), \dots, g_N(u)$ such that deg $g_i=\la_i+N-i$.
Each polynomial $g_i$ is defined up to multiplication by a number and
addition of a linear combination of $g_{i+1},\dots,g_N$.

For any $a\in \C$ define distinct integers $\bs
d_{X,a}=(d_1,\dots,d_N)$ called the flag exponents of $X$ as follows.
Choose a basis $g_1,\dots,g_N$ of $X$ (not changing the degrees of
these polynomials) so that $g_1,\dots, g_N$ have different orders at
$u=a$ and set $d_i$ to be the order of $g_i$ at $u=a$.

We say that $X$ is of type $\bs d$ if there exists $a\in\C$ such that
$\bs d_{X,a}=\bs d$. For every $\bs d$, denote by $\W_{\bs d}\subset
\W$ the closure of the subset of points of type $\bs d$. We are
interested in the subsets $\W_{\bs d}\subset \W$ which are of codimension
one and whose points
correspond to Problem \Ref{iii}.
Such subsets will be called {\it essential}.

For example, for $N=2$, the subset $\W_{(0,2)}$ is the only essential subset.
For $N=3$, the only essential subsets are $\W_{(1,3,0)}$, $\W_{(1,0,2)}$ and $\W_{(0,2,1)}$.

\begin{lem}
\label{lem list} For given $N$, if \,
$\W_{\bs d}$ is essential, then $\bs d$ is one of the following $2N-3$ indices,
\bea
\bs d_{1+} &=& (N-2,N,N-3,N-4,\dots,1,0) ,
\\
\bs d_{i+} &=& (N-1,N-2,\dots, N-i+1,N-i-1,N-i-2,N-i,N-i-3,\dots,1,0) ,
\\
\bs d_{i-} &=& (N-1,N-2,\dots, N-i+1,N-i-2,N-i,N-i-1,N-i-3,\dots,1,0)
\eea
for $i=2,\dots,N-1$.
\end{lem}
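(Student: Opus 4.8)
The plan is to turn the two geometric requirements defining an essential subset --- codimension one together with membership in Problem \Ref{iii} --- into a combinatorial condition on the flag exponent sequence, and then to enumerate its solutions. The first step is a dictionary between flag exponents at a point (I write $c$ for the point, reserving $a$ for the level index, since the paper overloads $a$) and the orders of vanishing of the polynomials $y_{a,X}(u)$, $a=0,\dots,N-1$, of Section \ref{sec map f omega cS}, which are proportional to $\Wr(f_{a+1,X},\dots,f_{N,X})$. If $g_1,\dots,g_N$ is a flag-adapted basis realizing $\bs d_{X,c}=(d_1,\dots,d_N)$, then, since adaptation only modifies $g_i$ by lower members $g_{i+1},\dots,g_N$ and hence preserves each $X_a=\langle f_{a+1,X},\dots,f_{N,X}\rangle$, the elements $g_{a+1},\dots,g_N$ form a basis of $X_a$ with distinct vanishing orders at $c$. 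Thus $\{d_{a+1},\dots,d_N\}$ is the vanishing sequence of $X_a$ at $c$, and expressing the order of a Wronskian through a vanishing sequence gives
\beq
\label{ord formula}
S_a\ :=\ \on{ord}_c\,y_{a,X}\ =\ \sum_{j=a+1}^N d_j\ -\ \binom{N-a}{2}\,,\qquad a=0,\dots,N-1\,.
\eeq
In particular the generic flag exponent sequence is $\bs d=(N-1,N-2,\dots,1,0)$, for which $S_a=0$ for all $a$ at a generic point $c$.

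Next I would reformulate Problem \Ref{iii} through \Ref{ord formula}. The three coincidence types listed there occur at $c$ exactly when either $S_a\ge 2$ for some $a\ge 1$ (a multiple root of $y_{a,X}$, i.e.\ $t^{(a)}_i=t^{(a)}_j$), or $S_a\ge 1$ and $S_{a+1}\ge 1$ for some $a\ge 0$ (a shared root of $y_{a,X}$ and $y_{a+1,X}$, i.e.\ $t^{(a)}_i=t^{(a+1)}_j$, and for $a=0$ the case $t^{(1)}_i=z_j$). The remaining non-generic possibility, $S_0\ge 2$ with all other $S_b=0$, is the collision of two coordinates $z_i=z_j$, which is Problem \Ref{i} and must be excluded.

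The core is the codimension count, where distinctness of the $d_i$ does most of the work. Using \Ref{ord formula} I would first show the lower bound: an isolated double root ($S_a=2$, all other $S_b=0$) and an isolated pair of simple roots ($S_a=S_{a+1}=1$, all other $S_b=0$) each force two of the $d_j$ to coincide, contradicting distinctness. Hence any admissible non-generic $\bs d$ must carry a double root $S_{a^*}=2$, $a^*\ge 1$, together with at least one adjacent simple root $S_{a^*-1}=1$ or $S_{a^*+1}=1$; while two double roots, or a double root flanked by simple roots on both sides, impose an extra independent condition and give codimension $\ge 2$. For the matching upper bound, the locus where $y_{a^*,X}$ acquires a multiple root is the pullback of the discriminant divisor under the regular map $X\mapsto y_{a^*,X}$ to monic polynomials of degree $l_{a^*}$; granting the dominance and finiteness properties established in \cite{MV2}, this pullback is a nonempty proper divisor, so codimension exactly one, and distinctness forces it to split into the two components on which the forced neighbour is $y_{a^*-1,X}$ (simple root below) or $y_{a^*+1,X}$ (simple root above). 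Reading \Ref{ord formula} backwards to recover the generic ordering of the $d_i$ subject to the prescribed $S_a$ identifies each component with one explicit sequence $\bs d$.

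Finally I would enumerate. A double root may sit at any level $a^*\in\{1,\dots,N-1\}$, and the forced simple root lies just below (level $a^*-1\ge 0$, always available) or just above (level $a^*+1\le N-1$, available iff $a^*\le N-2$), yielding $(N-1)+(N-2)=2N-3$ sequences. Matching these with \Ref{ord formula} produces the double-below case at $a^*=1$ as $\bs d_{1+}$ and the remaining below/above pairs as $\bs d_{i-}$ and $\bs d_{i+}$, $i=2,\dots,N-1$, with only one direction surviving at the boundary levels $a^*=1$ and $a^*=N-1$. The main obstacle is the codimension-one assertion: one must verify that the discriminant pullback neither drops to higher codimension nor acquires spurious components, which is precisely where the dominance of $X\mapsto y_{a^*,X}$ and the structure of the critical points of the Wronski map from \cite{MV2} enter; the distinctness bookkeeping and the concluding enumeration are then routine.
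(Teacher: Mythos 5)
Your proposal is correct and is essentially the paper's own argument: the paper's entire proof is the sentence ``The lemma is proved by straightforward counting of codimensions,'' and your dictionary $S_a=\sum_{j>a}d_j-\binom{N-a}{2}$ together with the distinctness-of-exponents argument (ruling out an isolated double root and an isolated adjacent pair of simple roots, hence isolating the $2N-3$ configurations ``double root at level $a^*\ge 1$ plus one adjacent simple root'') is exactly that counting carried out. The one step you, like the paper, assert rather than prove is that every strictly deeper degeneration has codimension at least two; also note the minor slip that at $a^*=1$ (for $N\ge 3$) \emph{both} directions survive, namely $\bs d_{1+}$ and $\bs d_{2-}$ --- only $a^*=N-1$ loses a direction, which is consistent with your count $(N-1)+(N-2)=2N-3$.
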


\begin{proof}
The lemma is proved by straightforward counting of codimensions.
\end{proof}

If $X$ is a point of $\W_{\bs d_{1+}}$, then for a suitable ordering of its
root coordinates we have $z_1=t^{(1)}_1=t^{(1)}_2$.
If $X$ is a point of $\W_{\bs d_{i+}}$, $i>1$, then for a suitable ordering of its
root coordinates we have $t^{(i-1)}_1=t^{(i)}_1=t^{(i)}_2$.
If $X$ is a point of $\W_{\bs d_{i-}}$, then for a suitable ordering of its
root coordinates we have $t^{(i-1)}_1=t^{(i-1)}_2=t^{(i)}_1$.
Each of these properties is a problem of type \Ref{iii}.

\begin{lem}
\label{lem irred}
Each essential subset is irreducible.
\end{lem}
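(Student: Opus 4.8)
The goal is to show that each essential subset $\W_{\bs d}$, with $\bs d$ one of the $2N-3$ indices listed in Lemma \ref{lem list}, is an irreducible algebraic variety. The plan is to exhibit each essential subset as the image of an irreducible variety under a dominant morphism, since the image of an irreducible set under a continuous map (and its Zariski closure) is irreducible. To this end, I would parametrize the points $X\in\W_{\bs d}$ by the data that produces their flag exponents: namely, a choice of point $a\in\C$ together with a basis $g_1,\dots,g_N$ of $X$ whose orders at $u=a$ are exactly $d_1,\dots,d_N$.

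First I would set up the incidence variety. For a fixed $\bs d$, consider the set of pairs $(X,a)\in\W\times\C$ such that $\bs d_{X,a}=\bs d$, and let $\Tilde\W_{\bs d}$ be its closure; then $\W_{\bs d}$ is the image of $\Tilde\W_{\bs d}$ under projection to $\W$. It therefore suffices to prove that $\Tilde\W_{\bs d}$ is irreducible, and for this I would use the other projection $\Tilde\W_{\bs d}\to\C$, $(X,a)\mapsto a$. Since $\W$ is isomorphic to an affine space and the condition on orders at a varying point $a$ is translation-equivariant, the fibers over distinct values of $a$ are all isomorphic (via $u\mapsto u-a$). Thus the map to $\C$ has irreducible base and isomorphic irreducible fibers, which reduces the problem to showing that a single fiber over a fixed $a$, say $a=0$, is irreducible.

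The fiber over $a=0$ is the locus of $X\in\W$ admitting a basis with prescribed orders $\bs d$ at the origin. I would describe this locus directly as the image of a linear-algebraic parametrization: one builds polynomials $g_i(u)=u^{d_i}+(\text{higher order terms})$ of the correct degrees $\la_i+N-i$, subject to the constraints that determine membership in the Schubert cell $\W$ and fix the leading orders at $0$. Because each $g_i$ ranges over an affine space of coefficients and the span $X=\langle g_1,\dots,g_N\rangle$ depends algebraically (indeed rationally, then regularly on the relevant open locus) on these coefficients, the fiber is the closure of the image of an irreducible affine space. Concretely, for the three families $\bs d_{1+}$, $\bs d_{i+}$, $\bs d_{i-}$, the coincidence of root coordinates (such as $z_1=t^{(1)}_1=t^{(1)}_2$ in the first case) translates into an explicit rational parametrization of $\W_{\bs d}$ by the free coefficients of the $g_i$, which visibly has irreducible domain.

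The main obstacle will be the bookkeeping in the last step: verifying that the parametrizing affine space really dominates $\W_{\bs d}$ and that no spurious components appear when one takes closures and passes between the root-coordinate description and the flag-basis description. One must check that the generic point of $\W_{\bs d}$ indeed arises from the chosen normal form of the basis, so that the parametrization is dominant rather than missing a component. I expect this to be a case-by-case but essentially routine verification using the explicit form of the $2N-3$ indices, the relation between $\Wr(g_{a+1},\dots,g_N)$ and $y_{a,X}$ from Section \ref{sec map f omega cS}, and the description of which root coincidences each $\bs d$ forces; the irreducibility itself then follows formally from the irreducibility of affine space together with the image/closure argument.
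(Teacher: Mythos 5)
Your proposal is correct and follows essentially the same route as the paper, whose entire proof is the observation that each essential subset is the image of an affine space under a suitable map; your parametrization by the point $a$ together with the coefficients of a normalized basis $g_1,\dots,g_N$ with prescribed orders at $a$ is exactly such an affine-space parametrization, spelled out in detail. The incidence-variety and fibration detour is harmless but unnecessary, since you end in the same place: the closure of the image of an irreducible affine space is irreducible.
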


\begin{proof}
It is easy to see that an essential subset is the image of an affine space
under a suitable map.
\end{proof}

\begin{lem}
\label{lem not crit}
Generic points of every essential subset are not critical for
the Wronski map.
\end{lem}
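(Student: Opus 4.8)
The plan is to detect noncriticality of the Wronski map through its differential and to reduce, by irreducibility, to a single test point in each essential subset. Since $\W$ is an affine cell of dimension $n=|\bs\la|$ with coordinates $f_{ij}$ and flag basis $f_1(u),\dots,f_N(u)$ as in \Ref{Basis}, a tangent vector at $X$ is a tuple $(h_1,\dots,h_N)$ in which $h_i$ is supported on the monomials $u^{m}$ with $m<d_i$, $m\notin P$. By \Ref{Wr coef} one has $\Wr(f_1,\dots,f_N)=\bigl(\prod_{i<j}(d_j-d_i)\bigr)\Wr_X$ with nonzero constant, so up to that constant the differential of the Wronski map sends $(h_i)$ to
\be
\delta\Wr\,=\,\sum_{i=1}^N\Wr(f_1,\dots,f_{i-1},h_i,f_{i+1},\dots,f_N)\,,
\ee
a polynomial of degree $<n$. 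Source and target both have dimension $n$, so $X$ is noncritical exactly when $(h_i)\mapsto\delta\Wr$ is injective, and the critical locus, being the vanishing of this Jacobian, is Zariski closed. Hence, since each essential subset is irreducible by Lemma \ref{lem irred}, it suffices to exhibit in every $\W_{\bs d}$ one point at which the map is injective.

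Second, in each $\W_{\bs d}$ I would take a representative whose special osculation sits at a single point, say $a=0$, and which is generic elsewhere. Reading off the flag exponents of Lemma \ref{lem list}, the Wronskian acquires at most a simple extra zero at $0$: for $\bs d_{i\pm}$, $i\ge2$, the exponent set is $\{0,1,\dots,N-1\}$ and $\Wr_X(0)\ne0$, while for $\bs d_{1+}$ it is $\{0,\dots,N-2,N\}$, giving a simple zero. Thus $\Wr_X$ has $n$ distinct roots $a_1,\dots,a_n$, and a polynomial of degree $<n$ vanishes iff it vanishes at all of them; so $(h_i)$ lies in the kernel iff $\delta\Wr(a_k)=0$ for every $k$. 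At a simple root the jet matrix $M_k=\bigl(f_j^{(m)}(a_k)\bigr)_{1\le j\le N,\,0\le m\le N-1}$ has rank $N-1$, its adjugate equals $r^{(k)}(\ell^{(k)})^{\top}$ for the right and left null vectors, and expanding the determinant in the varied row gives
\be
\delta\Wr(a_k)\,=\,\sum_{i=1}^N\ell^{(k)}_i\,(D_kh_i)(a_k)\,,\qquad D_k\,=\,\sum_{m=0}^{N-1}r^{(k)}_m\,\der^m\,.
\ee
Therefore noncriticality is exactly the linear independence of the $n$ functionals $\varphi_k:(h_i)\mapsto\sum_i\ell^{(k)}_i(D_kh_i)(a_k)$.

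Third, I would establish this independence, separating the two regimes. For $\bs d_{1+}$ the distinguished point is itself a root, say $a_{k_0}=0$, and the double root of $y_{1,X}$ there makes $\ell^{(k_0)}$, $r^{(k_0)}$, $D_{k_0}$ explicit while the remaining roots are generic; one then evaluates the $n\times n$ determinant at the chosen point and checks it is nonzero. For $\bs d_{i\pm}$, $i\ge2$, the collision $t^{(i-1)}_1=t^{(i)}_1=t^{(i)}_2$ (respectively $t^{(i-1)}_1=t^{(i-1)}_2=t^{(i)}_1$) lives among the levels $\ge1$, and $0$ is \emph{not} a root of $\Wr_X$; here each $\varphi_k$ looks individually generic, and the independence must be extracted from the way the osculation at $0$ constrains the null vectors $\ell^{(k)},r^{(k)}$ collectively.

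The hard part is precisely this last independence, above all for $\bs d_{i\pm}$: since the special point is invisible to the Wronskian, one cannot argue from the local behavior at a single root and must instead control the full $n\times n$ determinant. I would attack it using the presentation of $\W_{\bs d}$ as the image of an affine space underlying Lemma \ref{lem irred}, placing the test point so that all but three rows of the Jacobian are in generic position; this collapses the determinant to a $3\times3$ block governed by the three permuted flag exponents, which I would then check is nonsingular for each pattern $\bs d_{i+}$ and $\bs d_{i-}$.
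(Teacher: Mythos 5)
Your setup is sound and is essentially the linearization strategy that underlies the paper's own (one\-/line) proof, which simply refers to the argument of Proposition~8 of \cite{EG} that the Jacobian $\det\Delta_{\bs q}$ is nonzero: you correctly identify the tangent space of $\W$ at $X$ via the flag basis, write the differential of the Wronski map as $(h_i)\mapsto\sum_i\Wr(f_1,\dots,h_i,\dots,f_N)$ between spaces of equal dimension $n$, observe that the critical locus is Zariski closed so that by Lemma \ref{lem irred} one test point per essential subset suffices, and reformulate injectivity as linear independence of the $n$ functionals obtained by evaluating $\delta\Wr$ at the (distinct, simple) roots of $\Wr_X$. All of that is correct. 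But the lemma is not proved, because the one step that actually carries its content --- exhibiting a concrete test point in each $\W_{\bs d}$ and verifying that the resulting $n\times n$ determinant is nonzero --- is never executed. For $\bs d_{1+}$ you say one ``evaluates the determinant\dots and checks it is nonzero''; for $\bs d_{i\pm}$ you say the independence ``must be extracted'' and that you ``would attack it'' by a certain reduction. These are declarations of intent, not arguments, and they are precisely where the difficulty lives.

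Moreover, the proposed reduction for $\bs d_{i\pm}$, $i\ge 2$, to ``a $3\times3$ block governed by the three permuted flag exponents'' is unjustified and likely does not work as stated: as you yourself note, for these types the flag exponents at the special point form a permutation of $\{0,\dots,N-1\}$, so $\Wr_X$ does not vanish there and the special point does not index any of the $n$ functionals $\varphi_k$. The degeneration at $0$ constrains the space $X$ globally rather than isolating three rows of the Jacobian, so there is no evident mechanism by which ``all but three rows in generic position'' collapses the determinant to a $3\times3$ minor. To close the argument you would need either to carry out the determinant computation at an explicit degeneration (e.g.\ a suitable limit where the test point is built from monomial\-/like polynomials, in the spirit of the computation in \cite{EG}), or simply to invoke Proposition~8 of \cite{EG} as the paper does.
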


\begin{proof}
The proof is similar to the proof in Proposition 8 of \cite{EG}
of the fact that the Jacobian
$\det \Delta_{\bs q}$ is nonzero.
\end{proof}

\subsection{Proof of Theorem \ref{thm conj} }
\subsubsection{}
\label{subsub intr}
Let $\W_{\bs d}$ be an arbitrary essential subset.
%
We fix a
certain positive integer $q$. Then
for any numbers $\bs r = (r_0,r_1, r_2, \dots, r_q)$, such that $r_0\in\C$, $r_i\in\R$ for $i>0$,
$0<r_1< r_2 < \dots <r_q$,
we choose a point $X_{\bs r}(\ep,s)\in \W$ depending on two parameters $\ep,s$
so that
$X_{\bs r}(\ep,0)\in\W_{\bs d}$ and the point $X_{\bs r}(\ep,s)$ is nice for small nonzero $s$.
The dependence of $X_{\bs r}(\ep,s)$ on $\bs r$ in our construction is generic
in the following sense.
For any hypersurface $\mc Z \subset \W_d$
we can fix $\bs r$ so that the curve $X_{\bs r}(\ep, 0)$ does not lie in $\mc Z$.

For any fixed $\bs r$, we
choose ordered root coordinates $\bs T_{\bs r}(\ep,s)$ of $X_{\bs r}(\ep,s)$ and
consider the corresponding Bethe vector $\omega(\bs T_{\bs r}(\ep,s))$.
We choose a suitable coordinate $\omega_J(\bs T_{\bs r}(\ep,s))$ of the Bethe vector
and show that for small $\ep$
the coordinate $\omega_J(\bs T_{\bs r}(\ep,s))$ has nonzero limit as $s\to 0$.
That statement and Corollary \ref{cor al} show that the corresponding summand in
\Ref{B sum} is regular at $\W_{\bs d}$.

The proof that $\omega_J(\bs T_{\bs r}(\ep,s))$ has nonzero limit is lengthy.
We present it for $N = 2$ and $3$. The proof for $N>3$ is similar.

\subsubsection{Proof for $N=2$}
A point $X\in\W$ is a two-dimensional space of polynomials.
The only essential subset is $\W_{(0,2)}$. This essential subset corresponds
to the problem $z_{\la_1+\la_2}=t^{(1)}_{\la_2-1}=t^{(1)}_{\la_2}$ of
type~\Ref{iii} (after relabeling the root coordinates).

For any numbers $\bs r = (r_0,r_1, r_2, \dots, r_{\la_2+\la_1-1})$,
such that $r_0\in\C$, $r_i\in\R$ for $i>0$,
$0<r_1< r_2 < \dots <r_{\la_2+\la_1-1}$,
we choose $X_{\bs r}(\ep,s)$ to be the two-dimensional space of polynomials
spanned by
\bea
g_2(u) = (u-r_0)^{\la_2} + \sum_{i=2}^{\la_2-1}a_i(u-r_0)^i - s^2a_2\ ,
\qquad
g_1(u) = (u-r_0)^{\la_1+1} + \sum_{i=0}^{\la_1} b_i(u-r_0)^i\ ,
\eea
where $a_{\la_2-1} =\ep^{r_{1}},\ a_{\la_2-i}/a_{\la_2-i+1}= \ep^{r_{i}}$,\
$i=2,\dots,\la_2-2$,\
$b_{\la_1}=\ep^{r_{\la_2-1}},\ b_{\la_1-i}/b_{\la_1-i+1}= \ep^{r_{\la_2+i-1}}$,\ $i=1,\dots,\la_1$.
We have $X_{\bs r}(\ep,0)\in \W_{(0,2)}$.

Clearly, the dependence of $X_{\bs r}(\ep,s)$ on $\bs r$ is generic in the
sense defined in Section \ref{subsub intr}.

\medskip

We consider the asymptotic zone $1 \gg |\ep| \gg |s| > 0$ and
describe the asymptotics in that zone
of the roots of $g_2$ and Wronskian $\Wr(g_1,g_2)$.
The leading terms of asymptotics are obtained by the Newton polygon method.
If the leading term of some root is at least of order $s^2$,
we shall write that this root equals zero.

The roots of $g_2$ have the form:
\bea
t^{(1)}_1\sim r_0-\ep^{r_1}, \ \ t^{(1)}_2\sim r_0-\ep^{r_2},\ \dots\ ,\ \
t^{(1)}_{\la_2-2}\sim r_0-\ep^{r_{\la_2-2}} ,\ \
t^{(1)}_{\la_2-1}\sim r_0+s,\ \ t^{(1)}_{\la_2} \sim r_0-s.
\eea
The Wronskian is a polynomial in $u, \ep$. Below we present only the monomials
corresponding to the line segments of the Newton polygon
important for the leading asymptotics of the roots,
\bea
\Wr(g_1,g_2)
&=&
(\la_1+1-\la_2)(u-r_0)^{\la_2+\la_1} + \sum_{i=2}^{\la_2-1} (\la_1+1-i)a_i(u-r_0)^{\la_1+i} +
\\
&+&
a_{2} \sum_{i=0}^{\la_1} (i-2) b_i(u-r_0)^{i+1} + \dots\ .
\eea
It follows from this formula that the roots of $\Wr(g_1,g_2)$ have the form:
\begin{align*}
& z_1\sim r_0-\frac{\la_1-\la_2+2}{\la_1-\la_2+1}\,\ep^{r_{1}} ,\ \
z_2\sim r_0-\frac{\la_1-\la_2+3}{\la_1-\la_2+2}\,\ep^{r_{2}} ,\ \dots\ ,\
z_{\la_2-2} \sim r_0- \frac{\la_1-1}{\la_1-2}\,\ep^{r_{\la_2-2}} ,
\\[3pt]
& z_{\la_2-1} \sim r_0- \frac{\la_1-2}{\la_1-1}\,\ep^{r_{\la_2-1}} ,\ \dots\ ,\
z_{\la_2+\la_1-4} \sim r_0- \frac{1}{2}\,\ep^{r_{\la_2+\la_1-4}} ,\
\\[4pt]
& z_{\la_2+\la_1-3} \sim r_0+\ep^{(r_{\la_2+\la_1-3}+r_{\la_2+\la_1-2})/2} ,\ \
z_{\la_2+\la_1-2} \sim r_0- \ep^{(r_{\la_2+\la_1-3}+r_{\la_2+\la_1-2})/2} ,\ \
\\[3pt]
&
z_{\la_2+\la_1-1} \sim r_0- 2 \ep^{r_{\la_2+\la_1-1}} ,\ \
z_{\la_2+\la_1} \sim r_0\ .
\end{align*}
The point \,$\bs T_{\bs r}(\ep,s)\,=\,
(z_1,\dots, z_{\la_2+\la_1}, \,t^{(1)}_1,\dots, t^{(1)}_{\la_2})$
\,is a point of root coordinates of $X_{\bs r}(\ep,s)$.

Let us call the root coordinates $t^{(1)}_{\la_2-1}$, $t^{(1)}_{\la_2}$,
$z_{\la_2+\la_1}$ exceptional, and the remaining root coordinates regular.
For each regular root coordinate $y$ the leading term of asymptotics
of $y-r_0$ as $\ep\to 0$ has the form $A\ep^B$ for suitable numbers $A\ne 0$, $B$.

\begin{lem}
\label{lem z neq t N=2}
The pairs $(A,B)$ are different for different regular root coordinates.
\end{lem}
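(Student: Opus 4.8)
The plan is to read off, from the displayed asymptotic expansions preceding the lemma, the pair $(A,B)$ attached to each regular root coordinate through $y-r_0\sim A\ep^B$, to sort these pairs by the value of the exponent $B$, and then to verify distinctness group by group. The relevant data are as follows. For $k=1,\dots,\la_2-2$ the coordinate $t^{(1)}_k$ carries $(A,B)=(-1,r_k)$ and the coordinate $z_k$ carries $(A,B)=\bigl(-\tfrac{\la_1-\la_2+1+k}{\la_1-\la_2+k},\,r_k\bigr)$. For $m=0,\dots,\la_1-3$ the coordinate $z_{\la_2-1+m}$ carries $(A,B)=\bigl(-\tfrac{\la_1-2-m}{\la_1-1-m},\,r_{\la_2-1+m}\bigr)$. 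The two coordinates $z_{\la_2+\la_1-3}$ and $z_{\la_2+\la_1-2}$ share the averaged exponent $B=\tfrac12(r_{\la_2+\la_1-3}+r_{\la_2+\la_1-2})$, with $A=+1$ and $A=-1$ respectively. Finally $z_{\la_2+\la_1-1}$ carries $(A,B)=(-2,r_{\la_2+\la_1-1})$.

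First I would determine which exponents are repeated. Since the $r_i$ are distinct, the only coincidences among the $B$'s come from coordinates that carry the same $r_i$ by construction: each exponent $r_k$ with $1\le k\le\la_2-2$ is shared by exactly the pair $(t^{(1)}_k,z_k)$, while the averaged exponent is shared by exactly $(z_{\la_2+\la_1-3},z_{\la_2+\la_1-2})$; every other exponent in the list occurs once. It then suffices to separate the coordinates inside these two families by their $A$-values. For the family with $B=r_k$ this reduces to the inequality $-\tfrac{\la_1-\la_2+1+k}{\la_1-\la_2+k}\ne-1$, which holds because numerator and denominator differ while the denominator $\la_1-\la_2+k\ge1$ is nonzero (here $\la_1\ge\la_2$ and $k\ge1$). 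For the averaged family the values $+1$ and $-1$ are manifestly distinct.

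The remaining point is that the averaged exponent is genuinely new, i.e.\ $\tfrac12(r_{\la_2+\la_1-3}+r_{\la_2+\la_1-2})$ does not collide with any single-$r$ exponent present in the list. Here I would use monotonicity: since $r_{\la_2+\la_1-3}<r_{\la_2+\la_1-2}$, the average lies strictly between them, hence strictly above $r_{\la_2+\la_1-4}$ and strictly below $r_{\la_2+\la_1-1}$; because the only single-$r$ exponents actually appearing are $r_1,\dots,r_{\la_2+\la_1-4}$ together with $r_{\la_2+\la_1-1}$ (the indices $\la_2+\la_1-3$ and $\la_2+\la_1-2$ never occur alone), the average avoids all of them. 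The small-$\la$ degenerate ranges in which some index blocks above are empty are handled by the same bookkeeping and are in fact easier.

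I expect the main obstacle to be purely organizational rather than conceptual: keeping the four blocks of $z$-coordinates and the block of $t^{(1)}$-coordinates aligned with their correct index ranges, so that the grouping by $B$ is exhaustive and no shared exponent is overlooked. Once the blocks are laid out, the arithmetic separating $A$-values is routine; the only inequality with real content is the monotonicity argument placing the averaged exponent strictly between two consecutive $r$'s that are themselves absent from the list.
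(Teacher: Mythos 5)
Your proposal is correct and is exactly the paper's approach: the paper's proof consists of the single sentence ``A proof is by inspection of the list,'' and you have carried out that inspection explicitly, with the correct grouping by shared exponents (the pairs $(t^{(1)}_k,z_k)$ and the averaged pair) and the right arithmetic separating the leading coefficients. Nothing is missing.
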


\begin{proof}
A proof is by inspection of the list.
\end{proof}

For each exceptional coordinate $y$ the the absolute value of the
difference $y-r_0$ is
much smaller as $\ep\to 0$ than for any regular coordinate.

\medskip

The Bethe vector is the vector \
$\omega(\bs T_{\bs r}(\ep,s))\,=\,
\sum_J\,\omega_J(\bs T_{\bs r}(\ep,s))\,e_Jv$,\
where the sum is over all admissible $J$, see Section
\ref{Universal weight function and Bethe vectors}.
An
admissible $J=(j_1,\dots,j_{\la_1+\la_2})$ consists of ones and twos
with exactly $\la_2$ twos.
Choose $J$ with $j_i=2$ for $i=1, \dots , \la_2$.
Then
\bean
\label{om N=2}
\omega_J(\bs T_{\bs r}(\ep,s))\ = \
\sum_{\sigma \in S_{\la_2}} \prod_{i=1}^{\la_2}
\frac 1 {t^{(1)}_{\sigma(i)}-z_{i}}\ .
\eean

\begin{lem}
\label{lem limit N=2}
For small $\ep$, the function $\omega_J(\bs T_{\bs r}(\ep,s))$ has well-defined
limit as $s\to 0$.
\end{lem}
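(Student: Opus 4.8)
The plan is to prove existence of the limit by showing that, with $\ep$ held fixed and small, none of the denominators occurring in the finite sum \Ref{om N=2} tends to $0$ as $s\to 0$; since \Ref{om N=2} is a finite sum of products of the simple factors $(t^{(1)}_{\sigma(i)}-z_i)^{-1}$, the limit then exists termwise.

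The first and decisive step is to record which root coordinates actually enter \Ref{om N=2}. The product runs over $i=1,\dots,\la_2$, so only $z_1,\dots,z_{\la_2}$ appear among the $z$-coordinates; in particular the exceptional coordinate $z_{\la_2+\la_1}$ does \emph{not} occur. This is exactly what the choice of $J$ arranges: with $j_i=2$ precisely for $i\le\la_2$ one has $A_1(J)=\{1,\dots,\la_2\}$, so the $z$-index never reaches $\la_2+\la_1$. This matters because, as $s\to 0$, the only coordinates that approach $r_0$ are the two exceptional $t$-coordinates $t^{(1)}_{\la_2-1},t^{(1)}_{\la_2}$ (at distance of order $s$) and the exceptional $z$-coordinate $z_{\la_2+\la_1}$ (at distance of order $s^2$); since the last of these is absent from \Ref{om N=2}, a small $t$-coordinate and a small $z$-coordinate can never sit in the same factor.

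Next I would read off the limits of the individual denominators $t^{(1)}_{\sigma(i)}-z_i$ from the asymptotics tabulated above, fixing small $\ep$ and letting $s\to 0$. Each $z_i$ with $i\le\la_2$ is a regular coordinate, so its displacement from $r_0$ has leading term $A_i\ep^{B_i}$ with $A_i\ne 0$, independent of $s$; hence $z_i$ stays at distance of order $\ep^{B_i}$ from $r_0$. Therefore, if $\sigma(i)\in\{\la_2-1,\la_2\}$ the denominator tends to $r_0-z_i=A_i\ep^{B_i}\ne 0$, while if $\sigma(i)\le\la_2-2$ both entries are regular coordinates, and by Lemma~\ref{lem z neq t N=2} their leading pairs $(A,B)$ differ, so the difference again has nonzero leading term of order $\ep^{\min}$ and a nonzero limit. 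Thus every factor of every summand of \Ref{om N=2} converges to a finite nonzero value, and the finite sum has a well-defined limit.

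The main obstacle is entirely in the first step: one must be certain that the quantities of order $s$ and $s^2$ cannot combine into a vanishing denominator, which would require pairing a small $t$-coordinate with a small $z$-coordinate. Once the choice of $J$ is seen to exclude $z_{\la_2+\la_1}$ from \Ref{om N=2}, the remaining work is the routine verification, via Lemma~\ref{lem z neq t N=2}, that the surviving regular–regular differences do not accidentally degenerate. I expect no further difficulty after that, since the conclusion is then a termwise passage to the limit in a finite sum.
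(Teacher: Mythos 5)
Your proof is correct and follows essentially the same route as the paper, whose one-line proof simply invokes Lemma \ref{lem z neq t N=2} to conclude that each summand of \Ref{om N=2} converges; you merely make explicit the (correct) supporting observation that the choice of $J$ keeps the exceptional coordinate $z_{\la_2+\la_1}$ out of the denominators, so no factor pairs two coordinates that both tend to $r_0$ as $s\to 0$.
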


\begin{proof}
By Lemma \ref{lem z neq t N=2},
each summand in \Ref{om N=2} has well-defined limit.
\end{proof}

Our goal is to show that
$\bar \omega_J(\ep) = \lim_{s\to 0}\,\omega_J(\bs T_{\bs r}(\ep,s))$
is nonzero for small $\ep$.

\medskip

If $f$ is a function of $\ep$ and $f\sim A(f)\ep^{B(f)}$ for some numbers
$A(f)\ne 0,\,B(f)$
as $\ep\to 0$,
then we call $f$ acceptable,
$B(f)$ the order of $f$ and $A(f)$ the leading coefficient of $f$.
If the absolute value of $f$ is smaller than any positive power of $\ep$ or is the zero function,
then we set $B(f)= \infty$.

\medskip
For every $\sigma$, the limit
$
q_{\sigma}=\lim_{s\to 0}(\prod_{i=1}^{\la_2}
\frac 1 {t^{(1)}_{\sigma(i)}-z_{i}})$ is
an acceptable function of order
$B(q_{\sigma})= - \sum_{i=1}^{\la_2} \min (B(t^{(1)}_{\sigma(i)}-r_0), B(z_{i}-r_0))$.
In particular, \
$B(q_{\sigma})\geq - B(z_{\la_2-1}-r_0) - B(z_{\la_2}-r_0) -
\sum_{i=1}^{\la_2-2} B(t^{(1)}_i-r_0)$.

\begin{lem}
\label{lem N=2}
The function $\bar \omega_J(\ep)$ is acceptable.
Its order and leading coefficient are given by the formulas
\be
B(\bar \omega_J(\ep))\ =\
- B(z_{\la_2-1}-r_0) - B(z_{\la_2}-r_0) -
\sum_{i=1}^{\la_2-2} B(t^{(1)}_i-r_0)\ ,
\ee
\vvn-.2>
\be
A(\bar \omega_J(\ep))\ =\ 2 \,\frac 1{A(z_{\la_2-1}-r_0)A(z_{\la_2}-r_0)}
\prod_{i=1}^{\la_2-2}\frac 1{A(t^{(1)}_{i}-r_0)-A(z_{i}-r_0)}\ .
\ee
\qed
\end{lem}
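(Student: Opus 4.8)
The plan is to isolate the dominant term of the sum \Ref{om N=2} as $\ep\to0$, after the $s\to0$ limit of Lemma \ref{lem limit N=2}, by computing for each permutation $\sigma\in S_{\la_2}$ the order in $\ep$ of the product $\prod_{i=1}^{\la_2}(t^{(1)}_{\sigma(i)}-z_i)^{-1}$ and then pinning down exactly which permutations realize the smallest (most negative) order. First I would record the effective order of each denominator $t^{(1)}_{\sigma(i)}-z_i$ once $s\to0$. From the root asymptotics listed above, $z_i-r_0$ is acceptable of order $r_i$ for every $i=1,\dots,\la_2$, a regular coordinate $t^{(1)}_j-r_0$ (with $j\le\la_2-2$) has order $r_j$, and each exceptional coordinate satisfies $t^{(1)}_j\to r_0$, so that $t^{(1)}_j-z_i\to -(z_i-r_0)$ has order $r_i$. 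Consequently $t^{(1)}_{\sigma(i)}-z_i$ has order $\min(r_{\sigma(i)},r_i)$ when $\sigma(i)\le\la_2-2$ and order $r_i$ when $\sigma(i)$ is exceptional; on the diagonal $\sigma(i)=i\le\la_2-2$ the order is still $r_i$ because, by Lemma \ref{lem z neq t N=2}, the equal-order coordinates $t^{(1)}_i$ and $z_i$ have distinct leading coefficients, so no cancellation occurs.

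The order of a single summand is therefore $-\sum_{i=1}^{\la_2}B(t^{(1)}_{\sigma(i)}-z_i)$, and the dominant behaviour comes from the $\sigma$ maximizing $\sum_{i=1}^{\la_2}B(t^{(1)}_{\sigma(i)}-z_i)$. Each term of this sum is at most $r_i$, so the sum is bounded by $\sum_{i=1}^{\la_2}r_i$. To attain the two largest values $r_{\la_2-1}$ and $r_{\la_2}$, which are attached to $z_{\la_2-1}$ and $z_{\la_2}$, one is forced to pair these two $z$'s with the two exceptional $t$-coordinates: every regular $t^{(1)}_j$ has order at most $r_{\la_2-2}<r_{\la_2-1}$, so pairing $z_{\la_2-1}$ or $z_{\la_2}$ with a regular $t$ yields a strictly smaller order. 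This uses up both exceptional coordinates, whence on $\{1,\dots,\la_2-2\}$ the permutation is regular-to-regular and must satisfy $\sigma(i)\ge i$ for every $i$ in order to keep each diagonal contribution equal to $r_i$; a permutation of $\{1,\dots,\la_2-2\}$ with $\sigma(i)\ge i$ for all $i$ is necessarily the identity. Hence exactly two permutations attain the maximum $\sum_{i=1}^{\la_2}r_i$, namely the identity and the transposition swapping the exceptional indices $\la_2-1$ and $\la_2$, and every other permutation is strictly subdominant; this already yields the stated formula for $B(\bar\omega_J(\ep))$.

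It then remains to add the leading coefficients of these two dominant summands. For both the identity and the exceptional transposition, the factors with $i\le\la_2-2$ are $t^{(1)}_i-z_i$, contributing leading coefficient $A(t^{(1)}_i-r_0)-A(z_i-r_0)$, while the two exceptional factors contribute $-A(z_{\la_2-1}-r_0)$ and $-A(z_{\la_2}-r_0)$ in either order, whose product is unchanged. The two summands thus have identical leading coefficients and add rather than cancel, producing the factor $2$ and the stated formula for $A(\bar\omega_J(\ep))$; this value is nonzero because each $A(t^{(1)}_i-r_0)-A(z_i-r_0)$ is nonzero by Lemma \ref{lem z neq t N=2} and each of $A(z_{\la_2-1}-r_0)$, $A(z_{\la_2}-r_0)$ is the nonzero leading coefficient of an acceptable function. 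The main obstacle is the second step: one must verify that no permutation other than these two reaches the maximal order, so that the leading behaviour of \Ref{om N=2} is governed precisely by them. The rearrangement argument forcing the regular part to be the identity is what makes this rigorous, and it simultaneously explains the coincidence of the two leading coefficients that prevents cancellation and keeps $\bar\omega_J(\ep)$ nonzero for small $\ep$.
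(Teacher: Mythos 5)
Your argument is correct and follows exactly the route the paper intends (the paper states Lemma \ref{lem N=2} with only the preceding bound $B(q_{\sigma})\geq -B(z_{\la_2-1}-r_0)-B(z_{\la_2}-r_0)-\sum_{i=1}^{\la_2-2}B(t^{(1)}_i-r_0)$ as justification): you identify the per-factor bound, show it is attained only when $z_{\la_2-1},z_{\la_2}$ are paired with the two exceptional coordinates and $\sigma$ is the identity on the regular indices, and observe that the two extremal permutations contribute equal, noncancelling leading coefficients, producing the factor $2$. This fills in precisely the inspection the authors leave implicit, including the correct use of Lemma \ref{lem z neq t N=2} to rule out cancellation on the diagonal.
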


By Lemma \ref{lem N=2}, $\bar \omega_J(\ep)$ is nonzero for small $\ep$
and therefore, $\al\leq 0$ for generic points of $\W_{(0,2)}$.
Theorem \ref{thm conj} is proved for $N=2$.

\subsubsection{Proof for $N=3$ and $\W_{(1,3,0)}$ }

A point $X\in\W$ is a three-dimensional space of polynomials.
We study the problem
$z_{\la_1+\la_2+\la_3}=t^{(1)}_{\la_2+\la_3-1}=t^{(1)}_{\la_2+\la_3}$ of
type~\Ref{iii} (after relabeling the root coordinates).

For any numbers $\bs r = (r_0,r_1, r_2, \dots, r_{\la_3+\la_2+\la_1-1})$,
such that $r_0\in\C$, $r_i\in\R$ for $i>0$,
$0<r_1< r_2 < \dots <r_{\la_3+\la_2+\la_1-1}$,
we choose $X_{\bs r}(\ep,s)$ to be the three-dimensional space of polynomials
spanned by
\begin{align*}
g_3(u)\,&{}=\,(u-r_0)^{\la_3} + \sum_{i=0}^{\la_3-1} a_i(u-r_0)^i\ ,
\\[4pt]
g_2(u)\,&{}=\,(u-r_0)^{\la_2+1} + \sum_{i=3}^{\la_2} b_i(u-r_0)^i -3s^2b_3(u-r_0)\ ,
\\[4pt]
g_1(u)\,&{}=\,(u-r_0)^{\la_1+2} + \sum_{i=1}^{\la_1+1} c_i(u-r_0)^i\ ,
\end{align*}
where $a_{\la_3-1}=\ep^{r_{1}},\ a_{\la_3-i}/a_{\la_3-i+1}= \ep^{r_{i}}$,\ $i=2,\dots,\la_3$,\
$b_{\la_2}=\ep^{r_{\la_3+1}},\ b_{\la_2-i}/b_{\la_2-i+1}= \ep^{r_{\la_3+i+1}}$,\ $i=1,\dots,\la_2-3$,\
$c_{\la_1+1}=\ep^{r_{\la_3+\la_2-1}},\ c_{\la_1-i}/c_{\la_1-i+1}= \ep^{r_{\la_3+\la_2+i}}$,\ $i=0,\dots,\la_1-1$.\
We have $X_{\bs r}(\ep,0)\in \W_{(1,3,0)}$.

Clearly, the dependence of $X_{\bs r}(\ep,s)$ on $\bs r$ is generic
in the sense defined in Section \ref{subsub intr}.

\medskip

We consider the asymptotic zone $1 \gg |\ep| \gg |s| > 0$ and
describe the asymptotics in that zone
of the roots of the polynomials $g_3$, $\Wr(g_2,g_3)$, $\Wr(g_1,g_2,g_3)$.
We obtain the leading terms of asymptotics by the Newton polygon method.
If the leading term of some root is at least of order $s^2$,
we shall write that this root equals zero.

The roots of $g_3$ are of the form:
\bea
t^{(2)}_1 \sim r_0-\ep^{r_{1}}, \ \
t^{(2)}_2\sim r_0-\ep^{r_{2}}, \ \dots\ ,\ \
t^{(2)}_{\la_3}\sim r_0-\ep^{r_{\la_3}}\ .
\eea
We have
\bea
\Wr(g_2,g_3)\,
&=&
\,(\la_2+1-\la_3)(u-r_0)^{\la_2+\la_3} +
\sum_{i=0}^{\la_3-1}
(\la_2+1-i)a_i (u-r_0)^{\la_2+i-1}
\\
&+&
a_0\sum_{i=3}^{\la_2}
ib_i (u-r_0)^{i-1} -3s^2a_0b_3 + \dots\ ,
\eea
where the dots denote the
monomials which are not important for the leading asymptotics of
the roots.
The roots of $\Wr(g_2,g_3)$ are of the form
\begin{align*}
& t^{(1)}_1\sim r_0- \frac{\la_2-\la_3+2}{\la_2-\la_3+1}\,\ep^{r_{1}},\ \dots\ ,\
t^{(1)}_{\la_3}\sim r_0- \frac{\la_2+1}{\la_2}\,\ep^{r_{\la_3}},
\\
& t^{(1)}_{\la_3+1}\sim r_0 - \frac{\la_2}{\la_2+1}\,\ep^{r_{\la_3+1}},\ \dots\ ,\
t^{(1)}_{\la_3+\la_2-2}\sim r_0 - \frac{3}{4}\,\ep^{r_{\la_3+\la_2-2}},\ \
\\
&
t^{(1)}_{\la_3+\la_2-1}\sim r_0 + s,\ \
t^{(1)}_{\la_3+\la_2}\sim r_0 - s.
\end{align*}
We have
\begin{align*}
\Wr(g_1,g_2,g_3)\,&{}=\,
(\la_1+1-\la_2)(\la_1+2-\la_3)(\la_2+1-\la_3)(u-r_0)^{\la_3+\la_2+\la_1} +{}
\\
&{}+\,
\sum_{i=0}^{\la_3-1}
(\la_1+1-\la_2)(\la_1+2-i)(\la_2+1-i)a_i(u-r_0)^{i+\la_2+\la_1} +{}
\\
&{}+\,a_{0} \sum_{i=3}^{\la_2}
(\la_1+2-i)(\la_1+2)ib_i(u-r_0)^{\la_1+i-1} +
\\
&{}+\,
a_{0} b_{3}\sum_{i=1}^{\la_1+1} 3i(i-3)c_i(u-r_0)^{i} + \dots\ .
\end{align*}
The roots of $\Wr(g_1,g_2,g_3)$ are of the form
\begin{align*}
& z_1 \sim r_0-\frac{(\la_1+3-\la_3)(\la_2+2-\la_3)}
{(\la_1+2-\la_3)(\la_2+1-\la_3)}\,\ep^{r_{1}},\ \dots\ ,\
z_{\la_3} \sim r_0-\frac {(\la_1+2)(\la_2+1)}{(\la_1+1)\la_2}\,\ep^{r_{\la_3}},
\\[3pt]
& z_{\la_3+1} \sim r_0
-\frac {(\la_1+2-\la_2)\la_2}{(\la_1+1-\la_2)(\la_2+1)}\,\ep^{r_{\la_3+1}},
\ \dots\ ,\
z_{\la_3+\la_2-2} \sim r_0-\frac {3(\la_1-1)}{4(\la_1-2)}\,\ep^{r_{\la_3+\la_2-2}},
\\[3pt]
& z_{\la_3+\la_2-1} \sim r_0
-\frac {(\la_1+1)(\la_1-2)}{(\la_1+2)(\la_1-1)}\,\ep^{r_{\la_3+\la_2-1}},\ \dots
\ ,\ z_{\la_3+\la_2+\la_1-4} \sim r_0-\frac{2}{5}\,\ep^{r_{\la_3+\la_2+\la_1-4}},\
\\[3pt]
& z_{\la_3+\la_2+\la_1-3} \sim r_0+\frac{1}{\sqrt2}\,\ep^{(r_{\la_3+\la_2+\la_1-3}+r_{\la_3+\la_2+\la_1-2})/2},\
\
\\[3pt]
& z_{\la_3+\la_2+\la_1-2} \sim r_0-\frac{1}{\sqrt2}\,\ep^{(r_{\la_3+\la_2+\la_1-3}+r_{\la_3+\la_2+\la_1-2})/2},
\\[3pt]
& z_{\la_3+\la_2+\la_1-1} \sim r_0-\ep^{r_{\la_3+\la_2+\la_1-1}},\ \
z_{\la_3+\la_2+\la_1} \sim r_0\,.
\end{align*}
The point \,${\bs T_{\bs r}(\ep,s)\,=\,(z_1,\dots, z_{\la_3+\la_2+\la_1},\,
t^{(1)}_1,\dots,t^{(1)}_{\la_3+\la_2},\,t^{(2)}_1,\dots,t^{(2)}_{\la_3})}$
\,is a point of root coor\-dinates of $X_{\bs r}(\ep,s)$.

Let us call the root coordinates $t^{(1)}_{\la_3+\la_2-1}$,
$t^{(1)}_{\la_3+\la_2}$, $z_{\la_3+\la_2+\la_1}$ exceptional, and the remaining
root coordinates regular. For each regular root coordinate $y$ the leading
term of asymptotics of $y-r_0$ as $\ep\to 0$ has the form $A\ep^B$ for suitable numbers
$A\ne 0$, $B$.

\begin{lem}
\label{lem z neq t 130}
The pairs $(A,B)$ are different for different regular root coordinates.
\end{lem}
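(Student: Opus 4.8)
The plan is to prove the lemma by direct inspection of the asymptotic list, exactly as in Lemma~\ref{lem z neq t N=2}, but organized so that the bookkeeping stays finite. First I would record that every exponent $B$ occurring among the regular coordinates is either one of the numbers $r_1,\dots,r_{\la_3+\la_2+\la_1-1}$ or the single half-sum $(r_{\la_3+\la_2+\la_1-3}+r_{\la_3+\la_2+\la_1-2})/2$. Since $0<r_1<\dots<r_{\la_3+\la_2+\la_1-1}$, the $r_i$ are pairwise distinct, and the half-sum lies strictly between the two consecutive values $r_{\la_3+\la_2+\la_1-3}$ and $r_{\la_3+\la_2+\la_1-2}$, hence coincides with no $r_i$. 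Therefore two regular coordinates can share the same exponent $B$ only if they carry the same index, so it suffices to compare the leading coefficients $A$ within each such group; note that these coefficients are constants depending only on $\la_1,\la_2,\la_3$, independent of $r_0$ and of the chosen $\bs r$.

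Next I would list the groups and check that the listing is exhaustive. For $1\le i\le\la_3$ the three coordinates $z_i$, $t^{(1)}_i$, $t^{(2)}_i$ all carry $r_i$; for $\la_3<i\le\la_3+\la_2-2$ the two coordinates $z_i$, $t^{(1)}_i$ share $r_i$; for $\la_3+\la_2-1\le i\le\la_3+\la_2+\la_1-4$ and for $i=\la_3+\la_2+\la_1-1$ the coordinate $z_i$ stands alone; finally $z_{\la_3+\la_2+\la_1-3}$ and $z_{\la_3+\la_2+\la_1-2}$ form the half-sum pair. A coordinate count ($3\la_3$, $2(\la_2-2)$, $\la_1-2$, $2$, $1$, summing to $3\la_3+2\la_2+\la_1-3$, the total number of regular coordinates) confirms exhaustiveness. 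The singletons need no argument, and the half-sum pair has coefficients $+1/\sqrt2$ and $-1/\sqrt2$, which are distinct, so the content lies in the first two families.

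For these I would read off closed forms interpolating the displayed endpoint entries. Writing $A(y)$ for the leading coefficient of $y-r_0$, one finds for $1\le i\le\la_3$ that $A(t^{(2)}_i)=-1$, that $A(t^{(1)}_i)=-(\la_2-\la_3+i+1)/(\la_2-\la_3+i)$, and that $A(z_i)=-(\la_1+2-\la_3+i)(\la_2+1-\la_3+i)/\bigl((\la_1+1-\la_3+i)(\la_2-\la_3+i)\bigr)$, while for $i=\la_3+k$ with $1\le k\le\la_2-2$ one has $A(t^{(1)}_{\la_3+k})=-(\la_2+1-k)/(\la_2+2-k)$ and $A(z_{\la_3+k})=-(\la_1+1-\la_2+k)(\la_2+1-k)/\bigl((\la_1-\la_2+k)(\la_2+2-k)\bigr)$. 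All denominators are nonzero because $\la_1\ge\la_2\ge\la_3$. The comparisons then collapse: $A(t^{(1)}_i)<-1=A(t^{(2)}_i)$ and $A(z_i)<-1$, each being a product of factors exceeding $1$; the equality $A(z_i)=A(t^{(1)}_i)$ forces $(\la_1+2-\la_3+i)/(\la_1+1-\la_3+i)=1$, the equality $A(z_{\la_3+k})=A(t^{(1)}_{\la_3+k})$ forces $(\la_1+1-\la_2+k)/(\la_1-\la_2+k)=1$, and $A(z_i)=-1$ forces $(\la_1+2-\la_3+i)(\la_2+1-\la_3+i)=(\la_1+1-\la_3+i)(\la_2-\la_3+i)$, whose difference equals $(\la_1-\la_3+i)+(\la_2-\la_3+i)+2>0$; each is a contradiction. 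Hence within every group the coefficients are distinct, and the pairs $(A,B)$ are pairwise different.

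The main obstacle is not conceptual but organizational: one must be certain that the list of groups is exhaustive and that the guessed closed forms genuinely interpolate the two endpoint entries shown for each family, so that the single false linear identity settles the whole family rather than only its ends. Extracting those closed forms and checking the degenerate ranges (small $\la_2$ or $\la_3$, where some families are empty and require no argument) is the part demanding care; once they are in hand, the distinctness is immediate and the lemma follows.
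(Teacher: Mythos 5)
Your proof is correct and follows the same route as the paper, whose entire argument for this lemma is the single sentence ``A proof is by inspection of the list''; you have simply carried out that inspection in full, with the grouping by exponent and the interpolated closed forms matching the displayed endpoint entries. No gaps.
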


\begin{proof}
A proof is by inspection of the list.
\end{proof}

For each exceptional coordinate $y$ the absolute value of the
difference $y-r_0$ is
much smaller as $\ep\to 0$ than for any regular coordinate.

The Bethe vector has the form
$\omega(\bs T_{\bs r}(\ep,s))\,=\,
\sum_J\,\omega_J(\bs T_{\bs r}(\ep,s))\,e_Jv$,\
where the sum is over all admissible $J$, see Section
\ref{Universal weight function and Bethe vectors}. An
admissible $J=(j_1,\dots,j_{\la_3+\la_2+\la_1})$ consists of ones, twos and
threes with exactly $\la_3$ threes and $\la_2$ twos.
Choose $J$ with $j_i=3$ for $i=1,\dots, \la_3$
and $j_i=2$ for $i=\la_3+\la_2-1, \la_3+\la_2, \dots , \la_3+2\la_2-2$.
Then $\omega_J(\bs T_{\bs r}(\ep,s))$ is given by the formula
\bean
\label{130}
\phantom{aaa}
\omega_J(\bs T_{\bs r}(\ep,s))\ = \
\sum_{\sigma \in S_{\la_3+\la_2}}
\sum_{\tau \in S_{\la_3}}
\prod_{i=1}^{\la_3}
\frac 1
{(t^{(2)}_{\tau(i)}-t^{(1)}_{\sigma(i)})
(t^{(1)}_{\sigma(i)}-z_{i})}
\prod_{i=\la_3+1}^{\la_3+\la_2}
\frac 1 {t^{(1)}_{\sigma(i)}-z_{\la_2+i-2}}.
\eean

\begin{lem}
\label{lem limit 130}
For small $\ep$, the function $\omega_J(\bs T_{\bs r}(\ep,s))$ has well-defined limit
as $s\to 0$.
\end{lem}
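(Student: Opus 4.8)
The plan is to mirror the proof of Lemma \ref{lem limit N=2} in the present two-level setting. Since $\omega_J(\bs T_{\bs r}(\ep,s))$ is the finite sum \Ref{130} over the pairs $(\sigma,\tau)\in S_{\la_3+\la_2}\times S_{\la_3}$, it suffices to show that each summand has a well-defined limit as $s\to 0$ for fixed small $\ep$. Each summand is a product of reciprocals of differences of three kinds: $t^{(2)}_{\tau(i)}-t^{(1)}_{\sigma(i)}$, $t^{(1)}_{\sigma(i)}-z_i$, and $t^{(1)}_{\sigma(i)}-z_{\la_2+i-2}$. Thus everything reduces to checking that none of these differences tends to zero as $s\to 0$.

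First I would record which coordinates carry the $s$-dependence. Among the coordinates entering \Ref{130}, only the two exceptional ones $t^{(1)}_{\la_3+\la_2-1}\sim r_0+s$ and $t^{(1)}_{\la_3+\la_2}\sim r_0-s$ depend on $s$ to leading order; the regular coordinates depend on $\ep$ alone, with leading term $r_0+A\ep^B$ and finite $B$. As $s\to 0$ the two exceptional coordinates both tend to $r_0$. I would also note that the third exceptional coordinate $z_{\la_3+\la_2+\la_1}$ never appears as a denominator in \Ref{130}: the $z$-indices that occur are $1,\dots,\la_3$ together with $\la_3+\la_2-1,\dots,\la_3+2\la_2-2$, and $\la_3+2\la_2-2<\la_3+\la_2+\la_1$ because $\la_2\le\la_1$.

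Next I would verify that every difference appearing in the product has a nonzero limit. Each denominator pairs a $t^{(1)}$ coordinate with either a $t^{(2)}$ coordinate or a $z$ coordinate, so two exceptional coordinates never occur together in a single factor. If both coordinates in a difference are regular and distinct, Lemma \ref{lem z neq t 130} guarantees that their leading pairs $(A,B)$ differ, whence the difference has a nonzero leading term and is nonzero for small $\ep$. If one coordinate is an exceptional $t^{(1)}$ (tending to $r_0$) while the other is regular (differing from $r_0$ by a term of order $\ep^B$ with finite $B$), the difference tends to the regular coordinate's nonzero offset from $r_0$. Hence no denominator degenerates, and each summand, being a finite product of reciprocals of nonvanishing quantities, converges as $s\to 0$; summing over $(\sigma,\tau)$ gives the claimed limit for $\omega_J$.

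The only delicate point, exactly as in the $N=2$ case handled by Lemma \ref{lem z neq t N=2}, is the combinatorial bookkeeping underlying the second and third steps: one must confirm both that the exceptional $z$ coordinate is absent from the denominators and that no factor can pair the two exceptional $t^{(1)}$ coordinates with each other. Both facts are immediate from the explicit shape of \Ref{130}, so I expect this obstacle to be organizational rather than substantive.
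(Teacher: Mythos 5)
Your proposal is correct and follows exactly the paper's route: the paper's proof of this lemma is the one\-/line observation that, by Lemma \ref{lem z neq t 130}, each summand in \Ref{130} has a well\-/defined limit, and your argument is just that observation with the supporting bookkeeping (the exceptional $z$\-/coordinate is absent from the denominators since $\la_2\le\la_1$, no factor pairs the two exceptional $t^{(1)}$\-/coordinates, and every remaining difference has a nonzero limit by Lemma \ref{lem z neq t 130}) written out explicitly. No gap; note that the genuinely delicate situation where individual summands diverge and must be grouped into resonant pairs arises only in the later case $\W_{(0,2,1)}$, not here.
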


\begin{proof}
By Lemma \ref{lem z neq t 130},
each summand in \Ref{130} has well-defined limit.
\end{proof}

Our goal is to show that
$\bar \omega_J(\ep) = \lim_{s\to 0}\,\omega_J(\bs T_{\bs r}(\ep,s))$
is nonzero for small $\ep$.

\medskip

For every $\sigma$ the second product in \Ref{130},
has well-defined limit\\
$q_{\sigma}=\lim_{s\to 0}
(\prod_{i=\la_3+1}^{\la_3+\la_2}
\frac 1 {t^{(1)}_{\sigma(i)}-z_{\la_2+i-2}})$.
That limit is an acceptable function of order
$B(q_{\sigma})=
-\sum_{i=\la_3+1}^{\la_3+\la_2}\min(B(t^{(1)}_{\sigma(i)}-r_0),B(z_{\la_2+i-2}-r_0))$.
In particular,
\be
B(q_{\sigma})\geq
- B(z_{\la_3+2\la_2-3}-r_0) - B(z_{\la_3+2\la_2-2}-r_0) -
\sum_{i=\la_3+1}^{\la_3+\la_2-2} B(t^{(1)}_i-r_0) .
\ee
The largest second products are those with
\beq
\label{largest 1}
B(q_{\sigma})\ =\ - B(z_{\la_3+2\la_2-3}-r_0) - B(z_{\la_3+2\la_2-2}-r_0) -
\sum_{i=\la_3+1}^{\la_3+\la_2-2} B(t^{(1)}_i-r_0)\ .
\eeq
For every $\sigma, \tau$, the first product in \Ref{130}
has well-defined limit
\\
$p_{\sigma\tau}=\lim_{s\to 0}(\prod_{i=1}^{\la_3}
\frac 1{(t^{(2)}_{\tau(i)}-t^{(1)}_{\sigma(i)})
(t^{(1)}_{\sigma(i)}-z_{i})})$.
That limit is an acceptable function of order
\be
B(p_{\sigma\tau})\,=\,- \sum_{i=1}^{\la_3}
(\min (B(t^{(2)}_{\tau(i)}-r_0), B(t^{(1)}_{\sigma(i)}-r_0))+
\min (B(t^{(1)}_{\sigma(i)}-r_0), B(z_{i}-r_0))).
\ee
In particular,
$B(p_{\sigma\tau}) \geq - \sum_{i=1}^{\la_3}(B(t^{(2)}_{i}-r_0)+ B(z_{i}-r_0))$.
The largest first products are those with
\beq
\label{largest nonres 2 130}
B(p_{\sigma\tau})\,=\,
- \sum_{i=1}^{\la_3}
(B(t^{(2)}_{i}-r_0)+ B(z_{i}-r_0)).
\eeq

\begin{lem}
\label{lem 130}
The function $\bar \omega_J(\ep)$ is acceptable.
Its order and leading coefficient are given by the formulas
\bea
B(\bar \omega_J(\ep))\,=\,
&-& \sum_{i=1}^{\la_3} (B(t^{(2)}_{i}-r_0)+ B(z_{i}-r_0))
- B(z_{\la_3+2\la_2-3}-r_0) - B(z_{\la_3+2\la_2-2}-r_0)
\\
&-&
\sum_{i=\la_3+1}^{\la_3+\la_2-2} B(t^{(1)}_i-r_0)\ ,
\eea
\vv-.2>
\begin{align*}
A(\bar \omega_J(\ep))\,=\,
&
2 \,(\la_2-2)! \,
\frac 1{A(z_{\la_3+2\la_2-3}-r_0)A(z_{\la_3+2\la_2-2}-r_0)}\times{}
\\[3pt]
{}\times{}& \prod_{i=1}^{\la_3}
\frac 1
{(A(t^{(2)}_{i}-r_0)-A(t^{(1)}_{i}-r_0))
(A(t^{(1)}_{i}-r_0)-A(z_{i}-r_0))}
\prod_{i=\la_3+1}^{\la_3+\la_2-2} \frac 1{A(t^{(1)}_{i}-r_0)}\ .
\end{align*}
\end{lem}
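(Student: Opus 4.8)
The plan is to pass to the limit $s\to 0$ in \Ref{130} and write
\[
\bar\omega_J(\ep)\ =\ \sum_{\sigma\in S_{\la_3+\la_2}}q_\sigma\,\sum_{\tau\in S_{\la_3}}p_{\sigma\tau}\,,
\]
where $q_\sigma$ and $p_{\sigma\tau}$ are the limits of the second and first products singled out above, each of which exists by Lemma \ref{lem limit 130}. The order of a summand is $B(p_{\sigma\tau})+B(q_\sigma)$. Since the first product involves only the coordinates $t^{(1)}_{\sigma(i)}$ with $i\le\la_3$ and the second only those with $i>\la_3$, the two orders can be minimized compatibly: the optimal first product forces the set $\{t^{(1)}_1,\dots,t^{(1)}_{\la_3}\}$ into its positions and the optimal second product uses the remaining coordinates. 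The minimal total order is then the sum of the right-hand sides of \Ref{largest nonres 2 130} and \Ref{largest 1}, which is exactly the asserted value of $B(\bar\omega_J(\ep))$. It then remains to add the leading coefficients of all summands of this minimal order and to see that they do not cancel.

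First I would analyse the second product. The exceptional coordinates $t^{(1)}_{\la_3+\la_2-1},t^{(1)}_{\la_3+\la_2}$ tend to $r_0$, so a factor pairing one of them with a root $z$ has order $-B(z-r_0)$ and leading coefficient $-1/A(z-r_0)$; attaining the order \Ref{largest 1} forces these two coordinates to be matched with the two largest roots $z_{\la_3+2\la_2-3},z_{\la_3+2\la_2-2}$, which happens in $2$ ways carrying the same coefficient $1/(A(z_{\la_3+2\la_2-3}-r_0)A(z_{\la_3+2\la_2-2}-r_0))$. The remaining regular coordinates $t^{(1)}_{\la_3+1},\dots,t^{(1)}_{\la_3+\la_2-2}$ have orders strictly below those of all roots $z$ still available to them, so each such factor has order $-B(t^{(1)}_i-r_0)$ and leading coefficient $1/A(t^{(1)}_i-r_0)$ \emph{regardless of the matching}. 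Hence all $(\la_2-2)!$ matchings contribute the same leading coefficient and add constructively, producing the factor $2\,(\la_2-2)!$ and the two displayed products over the $z$'s and over $t^{(1)}_{\la_3+1},\dots,t^{(1)}_{\la_3+\la_2-2}$.

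Next I would analyse the first product, where $t^{(2)}_1,\dots,t^{(2)}_{\la_3}$, $t^{(1)}_1,\dots,t^{(1)}_{\la_3}$ and $z_1,\dots,z_{\la_3}$ all carry the orders $r_1,\dots,r_{\la_3}$. Using the supermodularity of $\min$ together with the distinctness of the pairs $(A,B)$ of regular root coordinates (Lemma \ref{lem z neq t 130}), I would show that the order \Ref{largest nonres 2 130} is reached only for the diagonal matching $t^{(1)}_i\leftrightarrow t^{(2)}_i\leftrightarrow z_i$. Since the three entries of the $i$-th factor then share the order $r_i$, that factor has leading coefficient $1/\bigl((A(t^{(2)}_i-r_0)-A(t^{(1)}_i-r_0))(A(t^{(1)}_i-r_0)-A(z_i-r_0))\bigr)$, the denominators being nonzero by Lemma \ref{lem z neq t 130}; as the matching is unique, no summation or cancellation occurs here.

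Combining the two analyses, every minimal-order summand of $\bar\omega_J(\ep)$ is the product of the unique minimal first product with one of the $2\,(\la_2-2)!$ minimal second products; all of these have equal leading coefficients, so they add up to the stated nonzero value of $A(\bar\omega_J(\ep))$, whence $\bar\omega_J(\ep)$ is acceptable. I expect the main obstacle to be the combinatorial bookkeeping of the second and third steps: one must check that in the second product every admissible factor involving a regular $t^{(1)}_i$ is genuinely dominated by that coordinate (so that the $(\la_2-2)!$ terms carry identical, non-cancelling coefficients) and that the diagonal matching of the first product is the unique minimizer. Both points reduce to comparing the exponents $r_i$ read off the explicit asymptotic lists above and to Lemma \ref{lem z neq t 130}.
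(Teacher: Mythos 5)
Your proposal is correct and follows essentially the same route as the paper's (much terser) proof: both identify the summands of minimal order by showing that the bounds \Ref{largest 1} and \Ref{largest nonres 2 130} can only be attained with the two exceptional coordinates matched to $z_{\la_3+2\la_2-3},z_{\la_3+2\la_2-2}$ and the first product equal to the diagonal one, and then observe that the $2\,(\la_2-2)!$ surviving terms carry identical leading coefficients and cannot cancel. The paper compresses all of this into one sentence, so your write-up simply supplies the bookkeeping it leaves implicit.
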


\begin{proof}
It is easy to see that if $\sigma, \tau$ are such that the second product
in \Ref{130} has order $-B(z_{\la_3+2\la_2-3}-r_0) - B(z_{\la_3+2\la_2-2}-r_0) -
\sum_{i=\la_3+1}^{\la_3+\la_2-2} B(t^{(1)}_i-r_0)$, \ then the first product
has order
$ - \sum_{i=1}^{\la_3} (B(t^{(2)}_{i}-r_0)+ B(z_{i}-r_0))$ only if it equals
$\prod_{i=1}^{\la_3}\frac 1{(t^{(2)}_{i}-t^{(1)}_{i})(t^{(1)}_{i}-z_{i})}$.
This implies the lemma.
\end{proof}

\subsubsection{Proof for $N=3$ and $\W_{(0,2,1)}$ }

We study the problem
$t^{(1)}_{\la_2+\la_3-1}=t^{(1)}_{\la_2+\la_3}=t^{(2)}_{\la_3}$ of
type~\Ref{iii} (after relabeling the root coordinates).

For any numbers $\bs r = (r_0,r_1, r_2, \dots, r_{\la_3+\la_2+\la_1})$,
such that $r_0\in\C$, $r_i\in\R$ for $i>0$,
$0<r_1< r_2 < \dots <r_{\la_3+\la_2+\la_1}$,
we choose $X_{\bs r}(\ep,s)$ to be the three-dimensional space of polynomials
spanned by
\begin{align*}
g_3(u)\,&{}=\,(u-r_0)^{\la_3} + \sum_{i=1}^{\la_3-1} a_i(u-r_0)^i\ ,
\qquad
g_2(u)\,=\,(u-r_0)^{\la_2+1} + \sum_{i=2}^{\la_2} b_i(u-r_0)^i + s^2b_2\ ,
\\
g_1(u)\,&{}=\,(u-r_0)^{\la_1+2} + \sum_{i=0}^{\la_1+1} c_i(u-r_0)^i\ ,
\end{align*}
where $a_{\la_3-1}=\ep^{r_{1}},\ a_{\la_3-i}/a_{\la_3-i+1}= \ep^{r_{i}}$,\ $i=2,\dots,\la_3-1$,\
$b_{\la_2}=\ep^{r_{\la_3}},\ b_{\la_2-i}/b_{\la_2-i+1}= \ep^{r_{\la_3+i}}$,\ $i=1,\dots,\la_2-2$,\
$c_{\la_1+1}=\ep^{r_{\la_3+\la_2-1}},\ c_{\la_1-i}/c_{\la_1-i+1}= \ep^{r_{\la_3+\la_2 +i}}$,\ $i=0,\dots,\la_1$.\
We have $X_{\bs r}(\ep,0)\in \W_{(0,2,1)}$.

Clearly, the dependence of $X_{\bs r}(\ep,s)$ on $\bs r$ is
generic in the sense defined in Section \ref{subsub intr}.

\medskip

We consider the same asymptotic zone $1 \gg |\ep| \gg |s| > 0$.

The roots of $g_3$ are of the form:
\be
t^{(2)}_1 \sim r_0-\ep^{r_{1}}, \ \
t^{(2)}_2\sim r_0-\ep^{r_{2}}, \ \dots\ ,\
t^{(2)}_{\la_3-1}\sim r_0-\ep^{r_{\la_3-1}},\ \
t^{(2)}_{\la_3} = r_0\ .
\ee
We have
\bea
\Wr(g_2,g_3)
&=&
(\la_2+1-\la_3)(u-r_0)^{\la_3+\la_2} +
\sum_{i=1}^{\la_3-1}(\la_2+1-i)a_i (u-r_0)^{\la_2+i}
+
\\
&+&
a_1\sum_{i=2}^{\la_2}(i-1)b_i (u-r_0)^{i} - s^2a_1b_2 + \dots\ .
\eea
The roots of $\Wr(g_2,g_3)$ are of the form
\begin{align*}
& t^{(2)}_1\sim r_0- \frac{\la_2-\la_3+2}{\la_2-\la_3+1}\,\ep^{r_{1}},\ \dots\ ,\
t^{(2)}_{\la_3-1}\sim r_0- \frac{\la_2}{\la_2-1}\,\ep^{r_{\la_3-1}},
\\[3pt]
& t^{(2)}_{\la_3}\sim r_0- \frac{\la_2-1}{\la_2}\,\ep^{r_{\la_3}},\ \dots\ ,\
t^{(2)}_{\la_3+\la_2-2}\sim r_0- \frac{1}{2}\,\ep^{r_{\la_3+\la_2-2}},\ \
t^{(2)}_{\la_3+\la_2-1}\sim r_0+s,\ \
t^{(2)}_{\la_3+\la_2}\sim r_0-s.
\end{align*}
We have
\begin{align*}
\Wr(g_1,g_2 &{},g_3)\,=\,
(\la_1+1-\la_2)(\la_1+2-\la_3)(\la_2+1-\la_3)(u-r_0)^{\la_3+\la_2+\la_1} +{}
\\[4pt]
&{}+\,\sum_{i=1}^{\la_3-1}
(\la_1+1-\la_2)(\la_1+2-i)(\la_2+1-i)a_i(u-r_0)^{i+\la_2+\la_1}+{}
\\
&{}+\,a_1 \sum_{i=2}^{\la_2}(\la_1+2-i)(\la_1+1)(i-1)b_i(u-r_0)^{\la_1+i} +
\\
&{}+\,
a_{1} b_{2}\sum_{i=0}^{\la_1+1} (i-2)(i-1)c_i(u-r_0)^{i} + \dots\ .
\end{align*}
The roots of $\Wr(g_1,g_2,g_3)$ are of the form
\begin{align*}
& z_1 \sim r_0-\frac{(\la_1+3-\la_3)(\la_2+2-\la_3)}
{(\la_1+2-\la_3)(\la_2+1-\la_3)}\,\ep^{r_1},\ \dots\ ,
z_{\la_3-1} \sim r_0-\frac{(\la_1+1)\la_2}{\la_1(\la_2-1)}\,\ep^{r_{\la_3-1}},
\\[3pt]
& z_{\la_3} \sim
r_0 - \frac{(\la_1+2-\la_2)(\la_2-1)}{(\la_1+1-\la_2)\la_2}\,\ep^{r_{\la_3}},
\ \dots\ ,\
z_{\la_3+\la_2-3}\sim r_0-\frac{\la_1}{2(\la_1+1)}\,\ep^{r_{\la_3+\la_2-3}},
\\[3pt]
&
z_{\la_3+\la_2-2}\sim r_0-\frac{(\la_1-1)}{(\la_1+1)}\,\ep^{r_{\la_3+\la_2-2}},
\ \dots\ ,\
z_{\la_3+\la_2+\la_1-3} \sim r_0-\frac13\,\ep^{r_{\la_3+\la_2+\la_1-3}},
\\[3pt]
&
z_{\la_3+\la_2+\la_1-2} \sim r_0+x_1\ep^m,\ \
z_{\la_3+\la_2+\la_1-1} \sim r_0+x_2\ep^m,\ \
z_{\la_3+\la_2+\la_1} \sim r_0+x_3\ep^m\ ,
\end{align*}
where $x_1,x_2,x_3$ are distinct roots of the equation $x^3+1=0$ and
$m = (r_{\la_3+\la_2+\la_1-2} + r_{\la_3+\la_2+\la_1-1}+r_{\la_3+\la_2+\la_1})/3$.

The point
$
\bs T_{\bs r}(\ep,s)\,=\,(z_1,\dots, z_{\la_3+\la_2+\la_1},\,
t^{(1)}_1,\dots,t^{(1)}_{\la_3+\la_2},\,t^{(2)}_1,\dots,t^{(2)}_{\la_3})
$
is a point of root coor\-dinates of $X_{\bs r}(\ep,s)$.

Let us call the root coordinates $t^{(2)}_{\la_3}$, $t^{(1)}_{\la_3+\la_2-1}$,
$t^{(1)}_{\la_3+\la_2}$ exceptional, and the remaining root coordinates
regular.
For each regular root coordinate $y$ the leading
term of asymptotics of $y-r_0$ as $\ep\to 0$ has the form $A\ep^B$ for suitable numbers
$A\ne 0$, $B$.

\begin{lem}
\label{lem z neq t 021n}
The pairs $(A,B)$ are different for different regular root coordinates.
\end{lem}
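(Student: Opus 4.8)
The plan is to prove the lemma exactly as its two predecessors, Lemmas \ref{lem z neq t N=2} and \ref{lem z neq t 130}, namely by direct inspection of the asymptotic list compiled just above the statement. The first step is to sort the regular root coordinates into the three families determined by the polynomial that produces them: the roots $t^{(2)}_1,\dots,t^{(2)}_{\la_3-1}$ of $g_3$, the regular roots $t^{(1)}_1,\dots,t^{(1)}_{\la_3+\la_2-2}$ of $\Wr(g_2,g_3)$, and the roots $z_1,\dots,z_{\la_3+\la_2+\la_1}$ of $\Wr(g_1,g_2,g_3)$. For each such coordinate $y$ I read off the pair $(A,B)$ from the displayed leading term $y-r_0\sim A\>\ep^B$.

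The second step is to record that within each family the exponents $B$ are already pairwise distinct. Indeed, in the first two families and in the part $z_1,\dots,z_{\la_3+\la_2+\la_1-3}$ of the third, the exponent attached to the $k$-th coordinate is the single number $r_k$, and the hypothesis $0<r_1<r_2<\dots$ forces these to differ; hence inside each family the pairs $(A,B)$ already separate in their second entry. The only exponent shared by several coordinates of a single family is the averaged value $m=(r_{\la_3+\la_2+\la_1-2}+r_{\la_3+\la_2+\la_1-1}+r_{\la_3+\la_2+\la_1})/3$ carried by the last three roots $z_{\la_3+\la_2+\la_1-2},z_{\la_3+\la_2+\la_1-1},z_{\la_3+\la_2+\la_1}$, and there the leading coefficients are the three distinct roots $x_1,x_2,x_3$ of $x^3+1=0$, so those pairs are distinct as well.

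The third and only substantive step is to resolve coincidences of $B$ across families. Because the $r_k$ are generic, two coordinates from different families share a value of $B$ precisely when they carry the same single exponent $r_k$: for $1\le k\le\la_3-1$ this couples $t^{(2)}_k$, $t^{(1)}_k$ and $z_k$; for $\la_3\le k\le\la_3+\la_2-2$ it couples $t^{(1)}_k$ and $z_k$; and for larger $k$ the exponent $r_k$ occurs only on $z_k$. In each coupled case one checks that the corresponding leading coefficients $A$, which are the explicit rational functions of $\la_1,\la_2,\la_3$ shown in the list, are pairwise distinct: $A(t^{(2)}_k)=-1$, while $A(t^{(1)}_k)$ is a ratio of consecutive linear factors such as $-(\la_2-\la_3+2)/(\la_2-\la_3+1)$ and $A(z_k)$ is a product of two such ratios, so any equality between two of them would, after cancelling the common factor, demand an impossible identity of the form $(\la_1+3-\la_3)/(\la_1+2-\la_3)=1$. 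I expect this cross-family coefficient comparison to be the main obstacle, since it is the one place where the conclusion could in principle fail for an exceptional partition $\bs\la$; the remedy is simply to observe that every putative coincidence reduces, after cancellation, to such a forbidden equality. Once all coupled triples and pairs have been checked, every pair $(A,B)$ is seen to be distinct, which proves the lemma.
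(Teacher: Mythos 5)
Your proposal is correct and follows exactly the route the paper intends: its entire proof of this lemma is ``A proof is by inspection of the list,'' and you carry out that inspection, separating coordinates by their exponent $B$ (an $r_k$ or the average $m$) and then checking that coordinates sharing an exponent are distinguished by their leading coefficients $A$, which always differ by a nontrivial factor of the form $(i+1)/i$. The only caveat worth recording is the one you already flag yourself: the cross-family coefficient comparison is the one place where the argument genuinely depends on the explicit rational functions of $\la_1,\la_2,\la_3$ in the displayed asymptotics, so it must be checked against a correctly computed list rather than against the pattern alone.
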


\begin{proof}
A proof is by inspection of the list.
\end{proof}

For each exceptional coordinate $y$ the the absolute value of the
difference $y-r_0$ is
much smaller as $\ep\to 0$ than for any regular coordinate.

The Bethe vector has the form
$\omega(\bs T_{\bs r}(\ep,s))\,=\,
\sum_J\,\omega_J(\bs T_{\bs r}(\ep,s))\,e_Jv$,\
where the sum is over all admissible $J$, see Section
\ref{Universal weight function and Bethe vectors}. An
admissible $J=(j_1,\dots,j_{\la_3+\la_2+\la_1})$ consists of ones, twos and
threes with exactly $\la_3$ threes and $\la_2$ twos.
Choose $J$ with $j_i=3$ for $i=1,2,\dots, \la_3-1$, $\la_3+\la_2-2$
and $j_i=2$ for $i=\la_3, \la_3+1,\dots, \la_3+\la_2-3, \la_3+\la_2-1,\la_3+\la_2$.
Then $\omega_J(\bs T_{\bs r}(\ep,s))$ is given by the formula
\begin{align}
\label{021n}
\phantom{aaa}
\omega_J(\bs T_{\bs r}(\ep,s))\ =\ \sum_{\sigma \in S_{\la_3+\la_2}}
\sum_{\tau \in S_{\la_3}}\prod_{i=1}^{\la_3-1}
\frac 1{(t^{(2)}_{\tau(i)}-t^{(1)}_{\sigma(i)})
(t^{(1)}_{\sigma(i)}-z_{i})}
& \prod_{i=\la_3+1}^{\la_3+\la_2-2}
\frac 1 {t^{(1)}_{\sigma(i)}-z_{i-1}}\times{}
\\
{}\times\,\frac 1
{(t^{(2)}_{\tau(\la_3)}-t^{(1)}_{\sigma(\la_3)})
(t^{(1)}_{\sigma(\la_3)}-z_{\la_3+\la_2-2})}
& \prod_{i=\la_3+\la_2-1}^{\la_3+\la_2}
\frac 1 {t^{(1)}_{\sigma(i)}-z_{i}}\ .
\notag
\end{align}

\begin{lem}
\label{lem limit 021n}
For small $\ep$, the function $\omega_J(\bs T_{\bs r}(\ep,s))$ has well-defined limit as
$s\to 0$.
\end{lem}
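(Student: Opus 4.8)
The plan is to follow the pattern of Lemmas \ref{lem limit N=2} and \ref{lem limit 130}, but with one essential new feature that does not arise there: in the present case the \emph{individual} summands of \Ref{021n} need not stay bounded as $s\to0$, so a cancellation \emph{between} summands has to be exhibited. First I would read off, from the asymptotic lists above together with Lemma \ref{lem z neq t 021n}, which factors of a single summand of \Ref{021n} can become singular. Every factor has the form $1/(t^{(2)}_{\tau(i)}-t^{(1)}_{\sigma(i)})$ or $1/(t^{(1)}_{\sigma(i)}-z_c)$. A factor of the second type is harmless: its two entries are either two regular coordinates, which by Lemma \ref{lem z neq t 021n} have distinct leading pairs $(A,B)$ and hence a nonzero difference, or an exceptional $t^{(1)}$ tending to $r_0$ paired against a regular $z_c$ that stays bounded away from $r_0$. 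Among the factors of the first type the only exceptional level-two coordinate is $t^{(2)}_{\la_3}=r_0$, so the \emph{only} possible singular factor is $1/(t^{(2)}_{\la_3}-t^{(1)}_{\sigma(i_0)})$, where $i_0$ is the unique index with $\tau(i_0)=\la_3$; it is singular precisely when $\sigma(i_0)\in\{\la_3+\la_2-1,\la_3+\la_2\}$, and then, using $t^{(1)}_{\la_3+\la_2-1}\sim r_0+s$ and $t^{(1)}_{\la_3+\la_2}\sim r_0-s$, it behaves like $1/(\mp s)$.

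Consequently every summand with $\sigma(i_0)\notin\{\la_3+\la_2-1,\la_3+\la_2\}$ already has a well-defined limit. For the remaining (divergent) summands I would introduce the fixed-point-free involution that keeps $\tau$ and swaps the two values $\la_3+\la_2-1$ and $\la_3+\la_2$ of $\sigma$, i.e.\ $\sigma'=\sigma\circ(i_0\,j_1)$ with $j_1=\sigma^{-1}(\la_3+\la_2)$, so that $\sigma'(i_0)$ is the other exceptional index. Writing the two paired summands as $P_\sigma/(t^{(2)}_{\la_3}-t^{(1)}_{\sigma(i_0)})$ and $P_{\sigma'}/(t^{(2)}_{\la_3}-t^{(1)}_{\sigma'(i_0)})$, the cofactors $P_\sigma,P_{\sigma'}$ are analytic in $s$ for small fixed $\ep$ and have the \emph{same} limit $P$ as $s\to0$: the swap only exchanges factors in which one of the two exceptional coordinates $t^{(1)}_{\la_3+\la_2-1},t^{(1)}_{\la_3+\la_2}$ meets a coordinate bounded away from $r_0$, and since both tend to $r_0$ each such factor keeps its limit. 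The two singular denominators being $\sim -s$ and $\sim +s$, one gets
\be
s\left(\frac{P_\sigma}{\,t^{(2)}_{\la_3}-t^{(1)}_{\sigma(i_0)}\,}
+\frac{P_{\sigma'}}{\,t^{(2)}_{\la_3}-t^{(1)}_{\sigma'(i_0)}\,}\right)
\ \longrightarrow\ -P+P\ =\ 0\qquad(s\to0)\,,
\ee
so the combined simple pole has zero residue; together with the analyticity of $P_\sigma,P_{\sigma'}$ this forces the sum of the pair to be regular at $s=0$. Hence each pair, and therefore the whole expression \Ref{021n}, has a well-defined limit, which is what the lemma asserts (and which then feeds into Corollary \ref{cor al} via the subsequent computation of the leading order of $\bar\omega_J$).

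The step I expect to be the main obstacle is exactly this pole cancellation: one must check carefully that the two singular factors carry opposite residues and that the cofactors agree to leading order in $s$. This is where the two special features of the $\W_{(0,2,1)}$ configuration are used, namely that the lone exceptional level-two root $t^{(2)}_{\la_3}$ sits \emph{exactly} at $r_0$ independently of $s$, and that the colliding level-one roots are symmetric, $r_0\pm s$; the distinctness statement of Lemma \ref{lem z neq t 021n} is what guarantees that no further coincidences among the regular coordinates spoil the finiteness of the common limit $P$. Unlike the cases $N=2$ and $\W_{(1,3,0)}$, the singular factor cannot be avoided by the choice of $J$, since every level-two root necessarily occurs in the weight function, so the cancellation argument is unavoidable here.
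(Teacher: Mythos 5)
Your proposal is correct and follows essentially the same route as the paper: it isolates the same two singular factors $1/(t^{(2)}_{\la_3}-t^{(1)}_{\la_3+\la_2-1})$ and $1/(t^{(2)}_{\la_3}-t^{(1)}_{\la_3+\la_2})$, pairs the divergent summands by the same swap of the two colliding level-one roots, and cancels the simple poles — the paper merely goes one step further and computes the finite limits of the pairs explicitly (distinguishing the cases where the second exceptional root is or is not matched with a $t^{(2)}$-factor), because those "resonant pair" values are needed in the subsequent determination of the order of $\bar\omega_J(\ep)$.
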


\begin{proof}
Divergent summands in \Ref{021n} are the summands with factors
$\frac 1{t^{(2)}_{\la_3}-t^{(1)}_{\la_3+\la_2-1}}$ or
$\frac 1{t^{(2)}_{\la_3}-t^{(1)}_{\la_3+\la_2}}$. The divergent summands come in pairs.
There are two types of divergent pairs. The first type has the form
\begin{align*}
& p^{Ckij}_1\,=\,
\frac {C}{(t^{(2)}_{\la_3}-t^{(1)}_{\la_3+\la_2-1})
(t^{(1)}_{\la_3+\la_2-1}-z_i)(t^{(2)}_{k}-t^{(1)}_{\la_3+\la_2})
(t^{(1)}_{\la_3+\la_2}-z_j)}\ ,
\\[4pt]
& p^{Ckij}_2\,=\,
\frac {C}{(t^{(2)}_{\la_3}-t^{(1)}_{\la_3+\la_2})
(t^{(1)}_{\la_3+\la_2}-z_i)(t^{(2)}_{k}-t^{(1)}_{\la_3+\la_2-1})
(t^{(1)}_{\la_3+\la_2-1}-z_j)}\ ,
\end{align*}
where $C$ is a common factor. The second type has the form
\begin{align*}
& q^{Cij}_1\,=\,\frac {C}{(t^{(2)}_{\la_3}-t^{(1)}_{\la_3+\la_2-1})
(t^{(1)}_{\la_3+\la_2-1}-z_i)
(t^{(1)}_{\la_3+\la_2}-z_j)}\ ,
\\[4pt]
& q^{Cij}_2\,=\,\frac {C}{(t^{(2)}_{\la_3}-t^{(1)}_{\la_3+\la_2})
(t^{(1)}_{\la_3+\la_2}-z_i)
(t^{(1)}_{\la_3+\la_2-1}-z_j)}\ ,
\end{align*}
where $C$ is a common factor. Each pair has well-defined limit as $s\to 0$,
\bea
\lim_{s\to 0}\
(p^{Ckij}_1+p^{Ckij}_2)\,&=&\,
\lim_{s\to 0}\
\frac {C}{(t^{(2)}_{k}-r_0)(z_i-r_0)(z_j-r_0)}\,(\frac 2{t^{(2)}_k-r_0}+\frac 2{z_j-r_0}
-\frac 2{z_i-r_0})\ ,
\\
\lim_{s\to 0}\ (q^{Cij}_1+q^{Cij}_2)\,&=&\,
\lim_{s\to 0}\
\frac {C}{(z_i-r_0)(z_j-r_0)}\,(\frac 2{z_j-r_0}-\frac 2{z_i-r_0})\ .
\eea
These limits will be called resonant pairs.
\end{proof}

It is easy to see that
$\bar \omega_J(\ep) = \lim_{s\to 0}\,\omega_J(\bs T_{\bs r}(\ep,s))$
is an acceptable function and its order equals
$b= -B(z_{\la_3+\la_2}-r_0)-\sum_{i=1}^{\la_3-1}B(t^{(2)}_i-r_0)-
\sum_{i=1}^{\la_3+\la_2}B(z_i-r_0)$. Indeed, the order of the limit of any convergent summand
in \Ref{021n} is greater than $b$.
The order of any resonant pair is not less than $b$.
There is exactly one resonant pair of order $b$.
That pair is of the second type and corresponds to
\begin{align*}
q_1\ =\ \prod_{i=1}^{\la_3-1} {}&
\frac 1{(t^{(2)}_{i}-t^{(1)}_{i})
(t^{(1)}_{i}-z_{i})}
\prod_{i=\la_3}^{\la_3+\la_2-3}
\frac 1 {t^{(1)}_{i}-z_{i}}\times{}
\\
&\phantom{(t^{(2)}_{i}}\times
\frac 1{(0-s)(s-z_{\la_3+\la_2-2})
(t^{(1)}_{\la_3+\la_2-2}-z_{\la_3+\la_2-1})(-s-z_{\la_3+\la_2})}\ ,
\\[8pt]
q_2\ =\ \prod_{i=1}^{\la_3-1} {}&
\frac 1{(t^{(2)}_{i}-t^{(1)}_{i})(t^{(1)}_{i}-z_{i})}
\prod_{i=\la_3}^{\la_3+\la_2-3}\frac 1 {t^{(1)}_{i}-z_{i}}\times{}
\\
&\phantom{(t^{(2)}_{i}}\times
\frac 1{(0+s)(-s-z_{\la_3+\la_2-2})
(t^{(1)}_{\la_3+\la_2-2}-z_{\la_3+\la_2-1})(s-z_{\la_3+\la_2})}\ .
\end{align*}
Thus, $\bar \omega_J(\ep)$ is nonzero for small $\ep$.

\subsubsection{Proof for $N=3$ and $\W_{(1,0,2)}$ }
We study the problem
$t^{(1)}_{\la_2+\la_3}=t^{(2)}_{\la_3-1}=t^{(2)}_{\la_3}$ of type~\Ref{iii}
(after relabeling the root coordinates).

For any numbers $\bs r = (r_0,r_1, r_2, \dots, r_{\la_3+\la_2+\la_1})$,
such that $r_0\in\C$, $r_i\in\R$ for $i>0$,
$0<r_1< r_2 < \dots <r_{\la_3+\la_2+\la_1}$,
we choose $X_{\bs r}(\ep,s)\in \W$ to be the three-dimensional space of polynomials
spanned by
\bea
g_3(u) &=& (u-r_0)^{\la_3} + \sum_{i=2}^{\la_3-1} a_i(u-r_0)^i - a_2s^2\ ,
\qquad
g_2(u) = (u-r_0)^{\la_2+1} + \sum_{i=0}^{\la_2} b_i(u-r_0)^i \ ,
\\
g_1(u) &=& (u-r_0)^{\la_1+2} + \sum_{i=1}^{\la_1+1} c_i(u-r_0)^i\ ,
\eea
where $a_{\la_3-1}=\ep^{r_1},\ a_{\la_3-i}/a_{\la_3-i+1}= \ep^{r_i}$,\ $i=2,\dots,\la_3-2$,\
$b_{\la_2}=\ep^{r_{\la_3-1}},\ b_{\la_2-i}/b_{\la_2-i+1}= \ep^{r_{\la_3+i-1}}$,\ $i=1,\dots,\la_2$,\
$c_{\la_1+1}=\ep^{r_{\la_3+\la_2}},\ c_{\la_1-i}/c_{\la_1-i+1}= \ep^{r_{\la_3+\la_2 +i+1}}$,\ $i=0,\dots,\la_1-1$.\
We have $X_{\bs r}(\ep,0)\in \W_{(1,0,2)}$.

Clearly, the dependence of $X_{\bs r}(\ep,s)$ on $\bs r$ is such that the corresponding curve
$X_{\bs r}(\ep,0)$ is generic in $\W_{\bs d}$ in the sense defined
in Section \ref{subsub intr}.

\medskip

We consider the same asymptotic zone $1 \gg |\ep| \gg |s| > 0$.

The roots of $g_3$ are of the form:
\be
t^{(2)}_1 \sim r_0-\ep^{r_1}, \ \dots\ ,\
t^{(2)}_{\la_3-2}\sim r_0-\ep^{r_{\la_3-2}}\ ,\ \
t^{(2)}_{\la_3-1}\sim r_0+s,\ \
t^{(1)}_{\la_3}\sim r_0-s.
\vv-.2>
\ee
We have
\vvn-.5>
\bea
\Wr(g_2,g_3)
&= &
(\la_2+1-\la_3)(u-r_0)^{\la_3+\la_2} +
\sum_{i=2}^{\la_3-1}
(\la_2+1-i)a_i (u-r_0)^{\la_3+i-1}
+
\\
&+&
a_2\sum_{i=0}^{\la_2}
(i-2)b_i (u-r_0)^{i+1} + \dots\ .
\eea
The roots of $\Wr(g_2,g_3)$ are of the form
\begin{align*}
& t^{(1)}_1\sim r_0- \frac{\la_2-\la_3+2}{\la_2-\la_3+1}\,\ep^{r_1},\ \dots\ ,\
t^{(1)}_{\la_3-2}\sim r_0- \frac{\la_2-1}{\la_2-2}\,\ep^{r_{\la_3-2}},
\\[3pt]
& t^{(1)}_{\la_3-1}\sim r_0- \frac{\la_2-2}{\la_2-1}\,\ep^{r_{\la_3-1}},\ \dots\ ,\
t^{(1)}_{\la_3+\la_2-4}\sim r_0-\frac12\,\ep^{r_{\la_3+\la_2-4}},
\\[3pt]
& t^{(1)}_{\la_3+\la_2-3}\sim \ep^{(r_{\la_3+\la_2-3}+r_{\la_3+\la_2-2})/2},\ \
t^{(1)}_{\la_3+\la_2-2}\sim -\ep^{(r_{\la_3+\la_2-3}+r_{\la_3+\la_2-2})/2},\ \
\\[3pt]
&
t^{(1)}_{\la_3+\la_2-1}\sim r_0- 2\ep^{r_{\la_3+\la_2-1}},\ \
t^{(1)}_{\la_3+\la_2}\sim r_0\ .
\end{align*}
We have
\begin{align*}
\Wr(g_1,g_2,g_3)\,&{}=\,
(\la_1+1-\la_2)(\la_1+2-\la_3)(\la_2+1-\la_3)(u-r_0)^{\la_3+\la_2+\la_1} +{}
\\[3pt]
&{}+\,\sum_{i=2}^{\la_3-1}
(\la_1+1-\la_2)(\la_1+2-i)(\la_2+1-i)a_i(u-r_0)^{i+\la_2+\la_1} +{}
\\
&{}+\,a_{2} \sum_{i=0}^{\la_2}
(\la_1+2-i)\la_1(i-2)b_i(u-r_0)^{\la_1+i+1} -
\\
&{}
-\,a_{2} b_{0}\,\sum_{i=1}^{\la_1+1} 2i(i-2)c_i(u-r_0)^{i-1} + \dots\ .
\end{align*}
The roots of $\Wr(g_1,g_2,g_3)$ are of the form
\begin{align*}
& z_1 \sim r_0-\frac{(\la_1+3-\la_3)(\la_2+2-\la_3)}
{(\la_1+2-\la_3)(\la_2+1-\la_3)}\,\ep^{r_1},\ \dots\ ,\
z_{\la_3-2} \sim r_0-\frac {\la_1(\la_2-1)}{(\la_1-1)(\la_2-2)}\,\ep^{r_{\la_3-2}},
\\[3pt]
& z_{\la_3-1} \sim r_0
-\frac {(\la_1+2-\la_2)(\la_2-2)}{(\la_1+1-\la_2)(\la_2-1)}\,\ep^{r_{\la_3+1}},
\ \dots\ ,\
z_{\la_3+\la_2-4}\sim r_0-\frac{\la_1-1}{2(\la_1-2)}\,\ep^{r_{\la_3+\la_2-4}},
\\[3pt]
& z_{\la_3+\la_2-3}\sim r_0+\sqrt{\frac{\la_1+1}{\la_1-1}}\;
\ep^{(r_{\la_3+\la_2-3}+r_{\la_3+\la_2-2})/2},\
\ z_{\la_3+\la_2-2}\sim r_0-\sqrt{\frac{\la_1+1}{\la_1-1}}\;
\ep^{(r_{\la_3+\la_2-3}+r_{\la_3+\la_2-2})/2},
\\[3pt]
& z_{\la_3+\la_2-1}\sim r_0-\frac{2(\la_1+2)}{\la_1+1}\,\ep^{r_{\la_3+\la_2-1}},
\\[3pt]
& z_{\la_3+\la_2} \sim r_0
-\frac {(\la_1+1)(\la_1-1)}{(\la_1+2)\la_1}\,\ep^{r_{\la_3+\la_2}},\ \dots\ ,\
z_{\la_3+\la_2+\la_1-2} \sim r_0-\frac38\,\ep^{r_{\la_3+\la_2+\la_1-2}},\
\\[3pt]
& z_{\la_3+\la_2+\la_1-1} \sim r_0 +\frac{1}{\sqrt3}\,
\ep^{(r_{\la_3+\la_2+\la_1-1}+r_{\la_3+\la_2+\la_1})/2},\
\ z_{\la_3+\la_2+\la_1} \sim r_0-\frac{1}{\sqrt3}\,
\ep^{(r_{\la_3+\la_2+\la_1-1}+r_{\la_3+\la_2+\la_1})/2}.
\end{align*}
The point \,${\bs T_{\bs r}(\ep,s)\,=\,(z_1,\dots, z_{\la_3+\la_2+\la_1},\,
t^{(1)}_1,\dots,t^{(1)}_{\la_3+\la_2},\,t^{(2)}_1,\dots,t^{(2)}_{\la_3})}$
\,is a point of root coor\-dinates of $X_{\bs r}(\ep,s)$.

Let us call the root coordinates $t^{(2)}_{\la_3-1}$, $t^{(2)}_{\la_3}$,
$t^{(1)}_{\la_3+\la_2}$ exceptional, and the remaining root coordinates
regular.
For each regular root coordinate $y$ the leading
term of asymptotics of $y-r_0$ as $\ep\to 0$ has the form $A\ep^B$ for suitable numbers
$A\ne 0$, $B$.

\begin{lem}
\label{lem z neq t 102n}
The pairs $(A,B)$ are different for different regular root coordinates.
\end{lem}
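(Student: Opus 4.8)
The plan is to show directly that the assignment sending each regular root coordinate $y$ to its leading pair $(A,B)$ in $y-r_0\sim A\ep^B$ is injective, by first partitioning the regular coordinates according to the value of the exponent $B$ and then separating the coordinates within each block by their leading coefficient $A$. The regular coordinates split into three families sitting at successive Wronskian levels: the regular roots $t^{(2)}_1,\dots,t^{(2)}_{\la_3-2}$ of $g_3$, the regular roots $t^{(1)}_i$ of $\Wr(g_2,g_3)$, and the roots $z_i$ of $\Wr(g_1,g_2,g_3)$. Their asymptotics are read off the three displayed lists, and I would organize the whole argument around the relation between coefficients at adjacent levels.

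First I would dispose of the exponents. Every $B$ occurring is either a single parameter $r_k$ or a half-sum $(r_k+r_{k+1})/2$ of two consecutive parameters. Since Section~\ref{subsub intr} allows $\bs r$ to be chosen generically, I may assume that no two of these finitely many expressions coincide unless they are literally identical; thus two regular coordinates can share a value of $B$ only when their exponent-expressions agree. Inspecting the lists, the resulting blocks are: (a) a single parameter $r_k$, which (for each fixed $k$) is the exponent of the corresponding coordinate at each level in which that index occurs, at most one per family; (b) the half-sum $(r_{\la_3+\la_2-3}+r_{\la_3+\la_2-2})/2$, shared by the pair $t^{(1)}_{\la_3+\la_2-3},t^{(1)}_{\la_3+\la_2-2}$ and the pair $z_{\la_3+\la_2-3},z_{\la_3+\la_2-2}$; and (c) the half-sum $(r_{\la_3+\la_2+\la_1-1}+r_{\la_3+\la_2+\la_1})/2$, shared only by $z_{\la_3+\la_2+\la_1-1},z_{\la_3+\la_2+\la_1}$.

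Then I would separate within each block by the coefficient $A$. In a block of type (a) the members lie at distinct levels, and the explicit formulas exhibit the coefficient at one level as that at the level below multiplied by a factor of the form $\tfrac{\la_i+c}{\la_i+c'}$ with $c\neq c'$ (for $k\le\la_3-2$, for instance, the factors $\tfrac{\la_2-\la_3+k+1}{\la_2-\la_3+k}$ and $\tfrac{\la_1-\la_3+k+2}{\la_1-\la_3+k+1}$ carry $-1$ to the middle- and then top-level coefficient). Because $\la_1\ge\la_2\ge\la_3\ge 2$ keeps every such integer combination positive, each of these factors is a ratio of distinct positive integers and is therefore never $1$; the same holds for their product, so the two or three members of the block are pairwise distinct. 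In a block of type (b) the two $t^{(1)}$-members have coefficients $+1$ and $-1$ while the two $z$-members have coefficients $\pm\sqrt{(\la_1+1)/(\la_1-1)}$, and the four are distinct because $\sqrt{(\la_1+1)/(\la_1-1)}$ is a positive number different from $1$ and the signs separate each pair; in block (c) the two members carry $\pm 1/\sqrt3$ and are separated by sign.

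The only substantive point is the cross-level comparison in the type-(a) blocks: one must know that the single-factor coefficient at the middle level never accidentally coincides with the two-factor coefficient at the top level for the specific integer parts $\la_1,\la_2,\la_3$ at hand. This is exactly what the factorization above settles uniformly, since a product of ratios of the form $\tfrac{\la_i+c}{\la_i+c'}$ with $c\neq c'$ can never reduce to $1$ when all the entries are positive; the remaining checks in (b) and (c) are the trivial sign-and-magnitude comparisons. Together these observations are what the phrase ``by inspection of the list'' encodes, and I expect the book-keeping of which indices $k$ populate which families to be the only tedious part.
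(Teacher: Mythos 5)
Your proposal is correct and is essentially the paper's own proof, which consists of the single sentence ``A proof is by inspection of the list''; you have simply carried out that inspection (partition by the exponent $B$, then separate within each block by the coefficient $A$). One small wording issue: a product of ratios each different from $1$ can still equal $1$ (e.g.\ $\tfrac23\cdot\tfrac32$), so the cross-level comparison in your type-(a) blocks should instead be justified by noting that each factor relating adjacent levels has the form $\tfrac{m+1}{m}$ with $m>0$, hence exceeds $1$, and therefore so does the product.
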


\begin{proof}
A proof is by inspection of the list.
\end{proof}

For each exceptional coordinate $y$ the the absolute value of the
difference $y-r_0$ is
much smaller as $\ep\to 0$ than for any regular coordinate.

The Bethe vector has the form
$\omega(\bs T_{\bs r}(\ep,s))\,=\,
\sum_J\,\omega_J(\bs T_{\bs r}(\ep,s))\,e_Jv$,\
where the sum is over all admissible $J$, see Section
\ref{Universal weight function and Bethe vectors}. An
admissible $J=(j_1,\dots,j_{\la_3+\la_2+\la_1})$ consists of ones, twos and
threes with exactly $\la_3$ threes and $\la_2$ twos.
Choose $J$ with $j_i=3$ for $i=1,2,\dots, \la_3-3, \la_3-2, \la_3+\la_2-1, \la_3+\la_2$
and $j_i=2$ for $i=\la_3-1, \la_3, \dots , \la_3+\la_2-2$.
Then $\omega_J(\bs T_{\bs r}(\ep,s))$ is given by the formula
\begin{align}
\label{102n}
\omega_J(\bs T_{\bs r}(\ep,s))\ ={}
& \sum_{\sigma \in S_{\la_3+\la_2}}
\sum_{\tau \in S_{\la_3}}\prod_{i=1}^{\la_3-2}
\frac 1{(t^{(2)}_{\tau(i)}-t^{(1)}_{\sigma(i)})
(t^{(1)}_{\sigma(i)}-z_{i})}\times{}
\\[2pt]
&\hphantom{_{\sigma\in{}}}\times
\prod_{i=\la_3-1}^{\la_3}
\frac 1{(t^{(2)}_{\tau(i)}-t^{(1)}_{\sigma(i)})
(t^{(1)}_{\sigma(i)}-z_{\la_2+i})}
\prod_{i=\la_3+1}^{\la_3+\la_2}\frac 1 {t^{(1)}_{\sigma(i)}-z_{i-2}} \ .
\notag
\end{align}
It is easy to see that
$\bar \omega_J(\ep) = \lim_{s\to 0}\,\omega_J(\bs T_{\bs r}(\ep,s))$
is an acceptable function and its order equals
$b=- \sum_{i=1}^{\la_3-2}B(t^{(2)}_i-r_0)
-\sum_{i=1}^{\la_3+\la_2}B(z_i-r_0) - 2 B(t^{(1)}_{\la_3+\la_2-1}-r_0)$.
Namely, consider the following four summands in \Ref{102n}:
\begin{align*}
q\ =\,{} &
\prod_{i=1}^{\la_3-2}
\frac 1 {t^{(2)}_{i}-t^{(1)}_{i}}
\prod_{i=1}^{\la_3+\la_2-4}\frac 1 {t^{(1)}_{i}-z_{i}}
\prod_{\la_3+\la_2-1}^{\la_3+\la_2}\frac 1 {t^{(1)}_{i}-z_{i}} \times{}
\\[3pt]
&{}\times\,\Bigl(\frac 1{(t^{(2)}_{\la_3-1}-t^{(1)}_{\la_3+\la_2-1})
(t^{(2)}_{\la_3}-t^{(1)}_{\la_3+\la_2})} +
\frac 1 {(t^{(2)}_{\la_3-1}-t^{(1)}_{\la_3+\la_2})
(t^{(2)}_{\la_3}-t^{(1)}_{\la_3+\la_2-1})}\Bigr)\times{}
\\[3pt]
&{}\times\,\Bigl(\frac 1 {(t^{(1)}_{\la_3+\la_2-3}-z_{\la_3+\la_2-3})
(t^{(1)}_{\la_3+\la_2-2}-z_{\la_3+\la_2-2})}+{}
\\[2pt]
&\hphantom{{}\!\!\times\Bigl((t^{(1)}_{\la_3+\la_2-3}-z_{\la_3+\la_2-3})}
+\,\frac 1{(t^{(1)}_{\la_3+\la_2-3}-z_{\la_3+\la_2-2})
(t^{(1)}_{\la_3+\la_2-2}-z_{\la_3+\la_2-3})}\Bigr)\ .
\end{align*}
Then the order of $\lim_{s\to 0}q$ equals $b$ and the order of
$\lim_{s\to 0}(\omega_J((\bs T_{\bs r}(\ep,s))-q)$ is greater than $b$.
Therefore, $\lim_{s\to 0}\,\omega_J(\bs T_{\bs r}(\ep,s))$ is nonzero for small $\ep$.

\medskip

For $N=3$ and every essential subset $\W_{\bs d}$,
we proved that the Bethe vector is nonzero
at generic points of $\W_{\bs d}$
and, hence, the number $\al$ of Corollary
\ref{cor al} is nonpositive. Thus, Theorem \ref{thm conj} is proved for $N=3$.


\begin{thebibliography}{[CG]}
\normalsize
\frenchspacing
\raggedbottom

\bi[AGV]{AGV} V.\,I.\,Arnol'd, S.\,M.\,Gusein-Zade, A.\,N.\,Varchenko,
{\it Singularities of differentiable maps} Vol.\;I, The classification
of critical points, caustics and wave fronts, Monographs in Mathematics,
82, Birkh\"auser Boston, Inc., Boston, MA, 1985

\bi[Ba]{Ba}H.\,M.\,Babujian,
{\it Off-Shell Bethe Ansatz Equation and N-point Correlators
in the $SU(2)$ WZNW Theory}, J.~Phys.~A {\bf 26} (1993), 6981--6990

\bi[CL]{CL} V.\,Chari, S.\,Loktev, {\it Weyl, Fusion and Demazure
modules for the current algebra of $sl_{r+1\/}$}, Adv. Math. {\bf 207}
(2006), no.\;2, 928--960

\bi[CP]{CP} V.\,Chari, A.\,Pressley
{\it Weyl Modules for Classical and Quantum Affine algebras\/},
Represent. Theory {\bf 5} (2001), 191--223 (electronic)

\bi[EG]{EG} A.\,Eremenko, A.\,Gabrielov,
{\it Degrees of real Wronski maps},
Discrete Comput. Geom. {\bf 28} (2002), no.\;3, 331--347

\bi[FFR]{FFR} B.\,Feigin, E.\,Frenkel, N.\,Reshetikhin,
{\it Gaudin model, Bethe ansatz and critical level\/},
Comm. Math. Phys. {\bf 166} (1994), no.\;1, 27--62

\bi[GH]{GH} Ph.\,Griffiths, J.\,Harris, Principles of Algebraic Geometry,
Wiley, 1994

\bi[M]{M} A.\,Matsuo,
{\it An application of Aomoto-Gelfand hypergeometric functions
to the ${\rm SU}(n)$ Knizhnik-Zamolodchikov equation},
Comm. Math. Phys. {\bf 134} (1990), no.\;1, 65--77

\bi[MTV1]{MTV1}
E.\,Mukhin, V.\,Tarasov, A.\,Varchenko,
{\it Bethe Eigenvectors of Higher Transfer Matrices\/},
J.~Stat. Mech. (2006), no.\;8, P08002, 1--44

\bi[MTV2]{MTV3}
E.\,Mukhin, V.\,Tarasov, A.\,Varchenko,
{\it Schubert calculus and representations of a general linear group},
Preprint arXiv:0711.4079, 1--32

\bi[MTV3]{MTV6} E.\,Mukhin and A.\,Varchenko,
{\it On separation of variables and completeness
of the Bethe ansatz for quantum $\gln$ Gaudin model},
Preprint arXiv:0712.0981 (2007), 1--9

\bibitem[MV1]{MV2}
E.\,Mukhin and A.\,Varchenko,
{\it Critical Points of Master Functions and Flag Varieties},
Communications in Contemporary Mathematics {\bf 6} (2004), no.~1, 111-163

\bi[MV2]{MV3} E.\,Mukhin, A.\,Varchenko,
{\it Norm of a Bethe vector and the Hessian of the master function\/},
Compos. Math. {\bf 141} (2005), no.\;4, 1012--1028

\bibitem[RV]{RV}
N.\,Reshetikhin, A.\,Varchenko,
{\it Quasiclassical asymptotics of solutions to the KZ equations},
Geometry, Topology and Physics for R.\,Bott, Intern. Press, 1995, 293--322,
hep-th/9402126

\bi[RSV]{RSV}
R.\,Rimanyi, L.\,Stevens, A.\,Varchenko,
{\it Combinatorics of rational functions and Poincar\'e-Birkhoff-Witt
expansions of the canonical $U(\n_-)$-valued differential form},
Ann. Comb. {\bf 9} (2005), no.\;1, 57--74

\bibitem[SV]{SV} V.\,Schechtman, A.\,Varchenko,
{\it Arrangements of hyperplanes and Lie algebra homology\/},
{\it Invent.\ Math.\/} {\bf 106} (1991), 139--194

\bibitem[ScV]{ScV} I.\,Scherbak, A.\,Varchenko,
{\it Critical points of functions, $\slt$-representations,
and Fuchsian differential equations with only univalued solutions},
Dedicated to Vladimir I.~Arnold on the occasion of his 65th birthday.
Mosc. Math.~J. {\bf 3} (2003), no.\;2, 621--645, 745

\bi[T]{T} D.\,Talalaev, {\it Quantization of the Gaudin System\/},
Preprint hep-th/0404153 (2004), 1--19

\bibitem[V1]{V1} A.\,Varchenko,
{\it Hypergeometric Functions and Representation Theory of Lie Algebras
and Quantum Groups}, Advanced Series in Mathematical Physics, Vol.~21,
World Scientific, 1995

\bibitem[V2]{V2} A.\,Varchenko,
{\it Bethe ansatz for arrangements of hyperplanes and
the Gaudin model}, Mosc. Math.~J. {\bf 6} (2006), no.\;1, 195--210,
223--224

\end{thebibliography}
\end{document}